\newcommand{\RR}{\mathbb R}
\newcommand{\NN}{\mathbb N}
\newcommand{\PP}{\mathbb P}
\newcommand{\QQ}{\mathbb Q}
\DeclareMathOperator{\esssup}{ess\,sup}
\DeclareMathOperator{\essinf}{ess\,inf}
\theoremstyle{plain}
\newtheorem{theorem}{Theorem}
\newtheorem{corollary}{Corollary}
\newtheorem{lemma}{Lemma}
\newtheorem{conjecture}{Conjecture}
\newtheorem{proposition}{Proposition}
\theoremstyle{definition}
\newtheorem{definition}{Definition}
\newtheorem{remark}{Remark}
\newtheorem{notation}{Notation}
\numberwithin{equation}{section}
\renewcommand{\leq}{\leqslant}
\renewcommand{\geq}{\geqslant}
\title[Stochastic Homogenization of nonconvex Hamilton-Jacobi equations]{Stochastic Homogenization of certain nonconvex Hamilton-Jacobi equations}
\subjclass[2010]{35B27}
\keywords{stochastic homogenization, coercive, nonconvex, uneven, Hamilton-Jacobi equation, stationary ergodic, random, min-max formula/identity, viscosity solution}
\author[Hongwei Gao]{Hongwei Gao}
\address{
Department of Mathematics \\ 
University of California, Los Angeles,   \\ 
CA, 90095\\
USA}
\email{hwgao@math.ucla.edu}
\begin{document}

\vspace{18mm} \setcounter{page}{1} \thispagestyle{empty}

\begin{abstract}
 In this paper, we prove the stochastic homogenization of certain nonconvex Hamilton-Jacobi equations. The nonconvex Hamiltonians, which are generally uneven and inseparable, are generated by a sequence of quasiconvex Hamiltonians and a sequence of quasiconcave Hamiltonians through the min-max formula. We provide a monotonicity assumption on the contact values between those stably paired Hamiltonians so as to guarantee the stochastic homogenzation.
\end{abstract}

\maketitle


\section{Introduction}
\subsection{The problem}
Let us consider the following Hamilton-Jacobi equation (\ref{the general HJ equation}) in any space dimension $d\geq 1$.
\begin{equation}\label{the general HJ equation}
\begin{cases}
u_{t} + H(Du,x,\omega) = 0 & (x,t)\in \RR^{d}\times(0,\infty)\\
u(x,0,\omega) = u_{0}(x) & x\in\RR^{d}
\end{cases} \tag{HJ}
\end{equation}
where the Hamiltonian $H(p,x,\omega): \RR^{d}\times\RR^{d}\times\Omega \rightarrow \RR$ is coercive in $p$, uniformly in $(x,\omega)\in\RR^{d}\times\Omega$. Here $x\in\RR^{d}$ represents the space variable and $\omega\in\Omega$ is the sample point from an underlying probability space $(\Omega,\mathcal{F},\PP)$. Usually, an assumption of stationary ergodicity is imposed on $(x,\omega)$. We set the initial condition $u_{0}(x)\in\text{BUC}(\RR^{d})$, the space of bounded uniformly continuous functions, so that (\ref{the general HJ equation}) has a unique viscosity solution (c.f. \cite{Crandall Evans and Lions, Evans GSAMS}). For any $\epsilon > 0$, let $u^{\epsilon}(x,t,\omega)$ be the viscosity solution of the equation (\ref{the rescaled HJ equation}) as follows.
\begin{equation}\label{the rescaled HJ equation}
\begin{cases}
u_{t}^{\epsilon} + H(Du^{\epsilon},\frac{x}{\epsilon},\omega) = 0 & (x,t)\in\RR^{d}\times(0,\infty)\\
u^{\epsilon}(x,0,\omega) = u_{0}(x) & x\in\RR^{d} 
\end{cases} \tag{$\text{HJ}^{\epsilon}$}
\end{equation}
Stochastic homogenization means this: there exists $\Omega_{0}\in\mathcal{F}$, with $\PP(\Omega_{0}) = 1$, such that for any $\omega\in\Omega_{0}$, $\lim\limits_{\epsilon\rightarrow 0}u^{\epsilon}(x,t,\omega) = \overline{u}(x,t)$ locally uniformly in $\RR^{d}\times(0,\infty)$. Moreover, $\overline{u}(x,t)$ is the unique viscosity solution of the homogenized Hamilton-Jacobi equation (\ref{the homogenized HJ equation}), in which $\overline{H}(p)$ is called the effective Hamiltonian.
\begin{equation}\label{the homogenized HJ equation}
\begin{cases}
\overline{u}_{t} + \overline{H}(D\overline{u}) = 0 & (x,t)\in\RR^{d}\times(0,\infty)\\
\overline{u}(x,0) = u_{0}(x) & x\in\RR^{d}
\end{cases} \tag{$\overline{\text{HJ}}$}
\end{equation}

\subsection{Overview}
\subsubsection{The literature}
The stochastic homogenization for convex Hamilton-Jacobi equations was first established by Souganidis \cite{Souganidis Asymptot} and by Rezakhanlou and Tarver \cite{Rezakhanlou and Tarver ARMA}, independently. It was extended to spatio-temporal case by Schwab \cite{Schwab IUMJ} if the Hamiltonian has a super-linear growth in the the gradient variable and by Jing, Souganidis and Tran \cite{Jing Souganidis and Tran DCDS} if $H(p,x,t,\omega) = a(x,t,\omega)|p|$. When the Hamiltonian is quasiconvex (or level-set convex) in $p$, the homogenization results were due to Davini and Siconolfi \cite{Davini and Siconolfi MA} if $d = 1$ and Armstrong and Souganidis \cite{Armstrong and Souganidis IMRN} for general $d$. Based on a finite range of dependence structure imposed on the random media, the quantitative results were obtained by Armstrong, Cardaliaguet and Souganidis \cite{Armstrong Cardaliaguet and Souganidis JAMS}. For the second-order Hamilton-Jacobi equations, the homogenization results were established by Lions and Souganidis \cite{Lions and Souganidis CPDE, Lions and Souganidis CMS}, by Kosygina, Rezakhanlou and Varadhan \cite{Kosygina Rezakhanlou and Varadhan CPAM}, by Kosygina and Varadhan \cite{Kosygina and Varadhan CPAM}, by Armstrong and Souganidis \cite{Armstrong and Souganidis JMPA}, and by Armstrong and Tran \cite{Armstrong and Tran APDE}. Except the (quasi)convexity, the general fine properties of the effective Hamiltonian $\overline{H}(p)$ is not well-known. In the periodic case, the inverse problem has been investigated by Luo, Tran and Yu \cite{Luo Tran and Yu ARMA}, by Jing, Tran and Yu \cite{Jing Tran and Yu NONLINEARITY} and by Tran and Yu \cite{Tran and Yu preprint}.

One of the main open questions in this field is whether or not the stochastic homogenization of a genuinely nonconvex Hamilton-Jacobi equation holds. The first positive result has been obtained by Armstrong, Tran and Yu \cite{Armstrong Tran and Yu CVPDE}, where $H(p,x,\omega) = (|p|^{2} - 1)^{2} + V(x,\omega)$. And later, the same authors proved in \cite{Armstrong Tran and Yu JDE} the homogenization results for the coercive Hamiltonians of the type $H(p) + V(x,\omega)$ if $d = 1$. The author of this paper then justified the general homogenization result for inseparable coercive Hamiltonians $H(p,x,\omega)$ in one dimension (see \cite{Gao CVPDE}). In the random media with a finite range of dependence, Armstrong and Cardaliaguet \cite{Armstrong and Cardaliaguet JEMS} confirmed the homogenization of Hamiltonians that are positive homogeneous in the gradient variable. This result was extended by Feldman and Souganidis \cite{Feldman and Souganidis} to Hamiltonians with star-shaped sub-level sets. Recently, Qian, Tran and Yu \cite{Qian Tran and Yu MA} provided a new decomposition method to prove homogenization for some general classes of even separable nonconvex Hamiltonians in multi dimensions, which include the result in \cite{Armstrong Tran and Yu CVPDE} as a special case. In the case of second-order nonconvex Hamilton-Jacobi equations, the homogenization result was proved if $d = 1$ and the Hamiltonian takes certain special forms. The first result was contributed by Davini and Kosygina \cite{Davini and Kosygina CVPDE} when the Hamiltonian is piecewise level-set convex and pinned at junctions. With the help of a probabilistic approach, Kosygina, Yilmaz and Zeitouni \cite{Kosygina Yilmaz and Zeitouni Preprint} (see also Yilmaz and Zeitouni \cite{Yilmaz and Zeitouni preprint}) established the homogenization for a Hamiltonian that takes a `W' shape and the potential function satisfies certain valley-hill assumption. 

The failing of homogenization indeed exists for nonconvex Hamiltonians. The first counter example was discovered by Ziliotto \cite{Ziliotto CPAM}, in which the distribution of $H(p,x,\omega)$ correlates distant regions of space. It is not clear if homogenization is still valid when the correlation vanishes at infinity. Later, it was shown by Feldman and Souganidis \cite{Feldman and Souganidis} that the existence of a strict saddle point of the Hamiltonian in gradient variable may result in non-homogenization. 

Therefore, in order to prove the homogenization result in a general random media, one should consider a Hamiltonian that is free of strict saddle point. Generally, such Hamiltonians could still be extremely complicated. One typical class of such Hamiltonians are of the rotational type, i.e., $H(p,x,\omega) = h(|p|,x,\omega)$. It has been conjectured in Qian, Tran and Yu \cite{Qian Tran and Yu MA} that the homogenization holds for any coercive Hamiltonian of the type $H(p,x,\omega) = \varphi(|p|) + V(x,\omega)$. On the other hand, the approaches in Armstrong, Tran and Yu \cite{Armstrong Tran and Yu CVPDE} and Qian, Tran and Yu \cite{Qian Tran and Yu MA} only apply to special even and separable Hamiltonians. An existing open question is if the evenness and separability are necessary in stochastic homogenization. Combining all these issues, we conjecture (see the Conjecture \ref{the conjecture}) that a strict-saddle-point-free Hamiltonian of the form (\ref{the derivation of the nonconvex Hamiltonian through minmax formula}) has the stochastic homogenization. In particular, this indicates that neither the evenness nor the separability is necessary. This conjecture is clearly more general since an arbitrary rotational Hamiltonian $H(p,x,\omega) = h(|p|,x,\omega)$ can be approximated by such kind of Hamiltonians. Note that although a special min-max formula has appeared in Qian, Tran and Yu \cite{Qian Tran and Yu MA}, it is still worth investigating the general Hamiltonians of the type (\ref{the derivation of the nonconvex Hamiltonian through minmax formula}), which has not been considered before. The goal of this article is to prove the Conjecture \ref{the conjecture} under a very weak monotonicity condition (\ref{the monotonicity condition}). Moreover, we provide an explicit expression of the effective Hamiltonian.

\subsubsection{The difficulties and the key ideas}
The classical periodic homogenization was based on the well-posedness of the cell problem (see Lions, Papanicolaou and Varadhan \cite{Lions Papanicolaou and Varadhan unpublished}, Evans \cite{Evans PRSE} and Ishii \cite{Ishii world scientific publisher}, etc.). However, in a general stationary ergodic media, Lions and Souganidis \cite{Lions and Souganidis CPAM} showed that the corresponding cell problem may not exist. Instead, one considers a convergence property, i.e., the regularly homogenizability (see Definition \ref{the definition of regularly homogenizable}), of the \textit{auxiliary macroscopic problem} (\ref{the auxiliary macroscopic problem}). This has been established for the aforementioned Hamiltonians (see \cite{Lions and Souganidis CMS, Armstrong and Souganidis IMRN, Armstrong and Cardaliaguet JEMS, Armstrong Tran and Yu CVPDE, Armstrong Tran and Yu JDE, Gao CVPDE, Qian Tran and Yu MA}, etc.). Recently, Cardaliaguet and Souganidis \cite{Cardaliaguet and Souganidis CRM} proved the existence of the cell problem for all extreme points of the convex hull of the sublevel sets of the effective Hamiltonian, if it exists. One natural idea in the homogenization of a nonconvex Hamiltonian is to decompose the nonconvex strucure into its convex/concave components (see \cite{Armstrong Tran and Yu CVPDE, Armstrong Tran and Yu JDE, Gao CVPDE, Qian Tran and Yu MA}). Since the viscosity solution is in general not classical, we can only talk about its subdifferentials and superdifferentials, which cannot be estimated properly in general. This becomes a major difficulty in operating the decomposition.

To achieve our goal of decomposition, we approximate the \textit{auxiliary macroscopic problem} of convex/concave Hamiltonians from two sides. More precisely, to the equation (\ref{the auxiliary macroscopic problem}), we find a subsolution (resp. supersolution), such that its superdifferential (resp. subdifferential) has certain lower (resp. upper) bound. This idea enables us to compare the noncovex Hamiltonian with its convex/concave compoments effectively. Note that part of this idea has been illustrated in \cite{Armstrong Tran and Yu CVPDE, Qian Tran and Yu MA} but it does not apply to our situation. The second new ingredient in our setting is the inseparability. Unlike the separable case \cite{Armstrong Tran and Yu CVPDE, Armstrong Tran and Yu JDE}, one can no longer deal with the kinetic energy and the potential energy separately. Instead, we introduce a family of auxiliary potential energy functions associated to each junction level (see the section \ref{the section of the regularly homogenizability}) as a replacement. It turns out that the existence of flat regions of the effective Hamiltonian does not depend on their horizontal oscillations (see the Lemma \ref{the equivalence of minimum level sets for convex and concave Hamiltonians}). Thus, an appropriate adjustment of certain vertical oscillation suffices to fulfill our needs (see the Lemma \ref{the upper bound for the flat piece the base case}). Lastly, another difficulty that the previous methods cannot overcome is the unevenness of the Hamiltonian in the gradient variable. Essentially, this issue is related to the non symmetry between the subdifferential and the superdifferential of the viscosity solution. However, the well-known inf-sup formula (c.f. \cite{Davini and Siconolfi MA, Armstrong and Souganidis IMRN}, etc.) of the effective Hamiltonian for any quasiconvex Hamilton-Jacobi equation retains a strong symmetry property (see the Lemmas \ref{the symmetry between Hamiltonian and its dual in evenness and negativity}, \ref{the symmetry of effective Hamiltonians in evenness}). This softly hints the homogenization of an unneven Hamiltonian. The connection between the symmetry of the inf-sup formula and the aforementioned machinary can be built by considering the Hamiltonian $H(p,x,\omega)$ and its even duality $H^{de}(p,x,\omega) := H(-p,x,\omega)$, simultaneously. It turns out that in (\ref{the auxiliary macroscopic problem}), the subdifferential (resp. superdifferential) of the solution associated to $H$ and $p$ is symmetric to the superdifferential (resp. subdifferential) of the solution associated to $H^{de}$ and $-p$. This helps to settle the issue of the unevenness. The assumption (\ref{the monotonicity condition}) plays its role in the inductive step (see the Section \ref{the inductive steps}). Basically, it helps to provide certain lower/upper bound of the effective Hamiltonian (see the Lemma \ref{the upper bound of the effective Hamiltonian at the intermediate inductive step} and the Lemma \ref{the lower bound of the effective Hamiltonian at the intermediate inductive step in the second half}). We point out that the corresponding results are not valid once the assumption (\ref{the monotonicity condition}) breaks down. These ideas together prove the regularly homogenizability of our Hamiltonian, which brings about the stochastic homogenization (see the Proposition \ref{the homogenization based on regularly homogenizability}) based on a variant of the perturbed test function method (c.f. \cite{Evans PRSE}).

\subsection{The assumptions and main results}
From now on, we fix an integer $\ell > 0$ and let $\check{H}_{i}(p,x,\omega)$, $i = 1,\cdots,\ell$, be $\ell$ quasiconvex Hamiltonians. Meanwhile, let $\hat{H}_{i}(p,x,\omega)$, $i = 1, \cdots, \ell$, be $\ell$ quasiconcave Hamiltonians. We consider a nonconvex Hamiltonian $H_{\ell}(p,x,\omega)$ generated through a min-max formula as follows.
\begin{equation}\label{the derivation of the nonconvex Hamiltonian through minmax formula}
H_{\ell} := \max\left\lbrace \check{H}_{\ell}, \min\left\lbrace \hat{H}_{\ell},  \cdots \max\left\lbrace \check{H}_{2}, \min\left\lbrace \hat{H}_{2}, \max\left\lbrace \check{H}_{1}, \hat{H}_{1} \right\rbrace \right\rbrace  \right\rbrace \cdots \right\rbrace \right\rbrace 
\end{equation}
Based on a min-max identity established in the Lemma \ref{the existence of monotone ordering in minmax formula} and the Corollary \ref{the reordering of the Hamiltonians}, and that the max (resp. min) of quasiconvex (resp. quasiconcave) functions is still quasiconvex (resp. quasiconcave), we can assume without loss of generality that
\begin{eqnarray}\label{the monotone ordering of Hamiltonians}
\check{H}_{1} \geq \check{H}_{2} \geq \cdots \geq \check{H}_{\ell} &\text{ and }& \hat{H}_{1} \leq \hat{H}_{2} \leq \cdots \leq \hat{H}_{\ell}
\end{eqnarray}

\begin{figure}[h]
\centering
\includegraphics[width=0.34\linewidth]{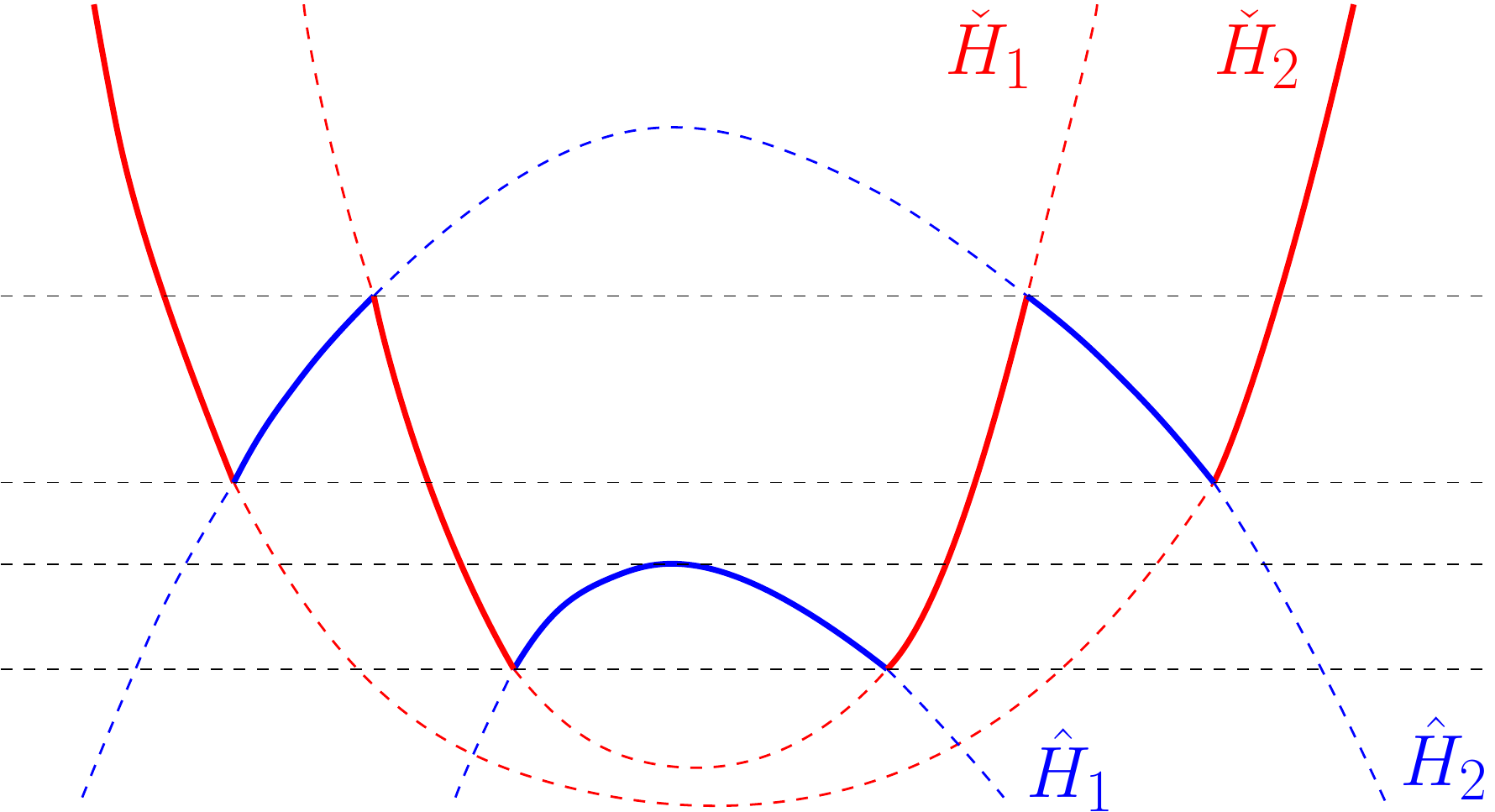}
\hspace{2mm}
\includegraphics[width=0.35\linewidth]{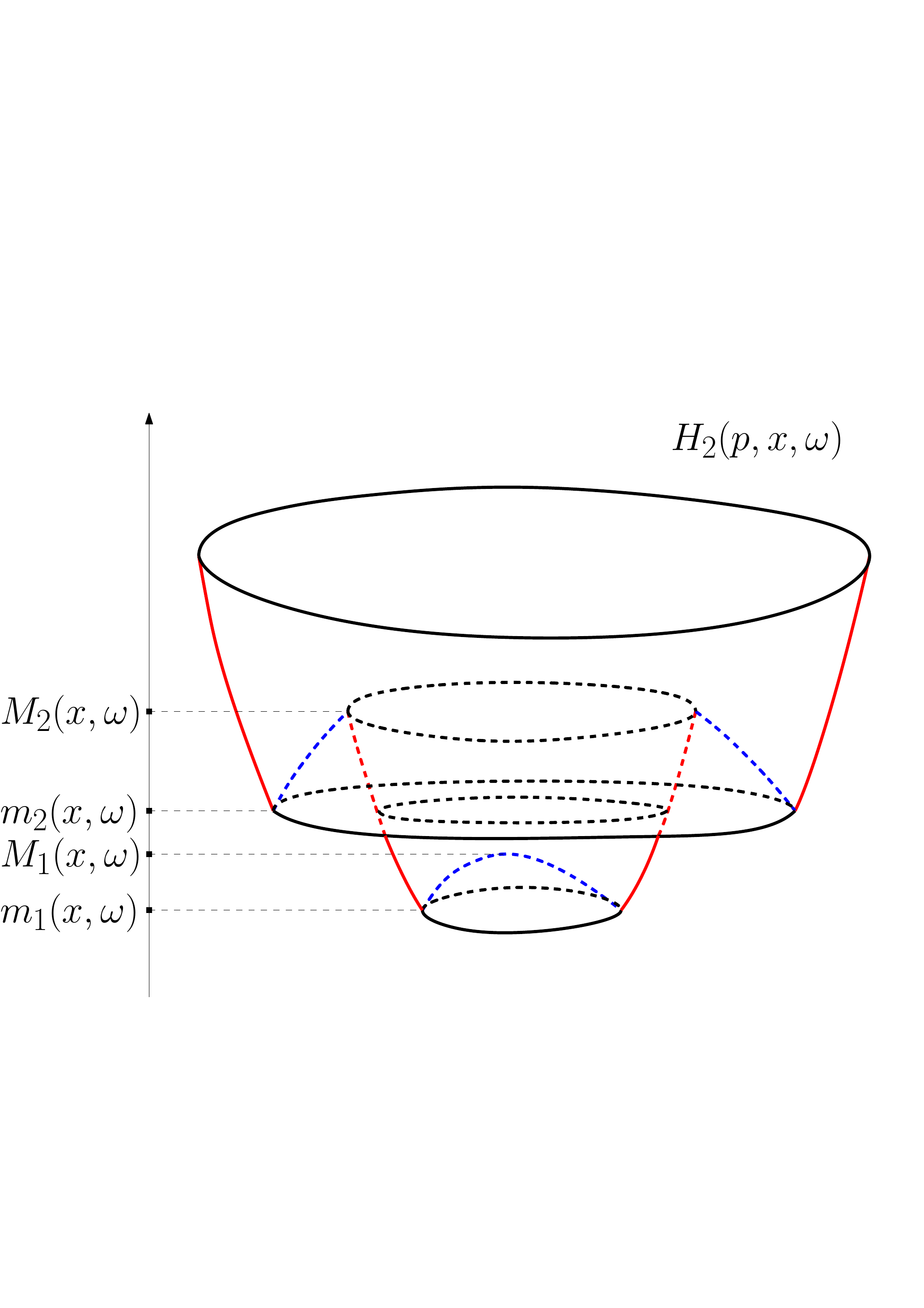}
\caption{An illustration of $H_{2}(p,x,\omega)$ under (A4)}
\label{fig:twopairssection}
\end{figure}
The following assumptions (A1) - (A4) are in force throughout the article. Let us denote $\check{H}_{k}$ or $\hat{H}_{k}$ by ${\ddot{H}_{k}}$,
\begin{enumerate}
	\item [(A1)] Stationary Ergodicity. For any $p\in\RR^{d}$ and $1 \leq k \leq \ell$, $\ddot{H}_{k}(p,x,\omega)$ is stationary ergodic in $(x,\omega)$. To be more precise, there exists a probability space $(\Omega,\mathcal{F},\PP)$ and a group $\left\lbrace \tau_{x}\right\rbrace_{x\in\RR^{d}}$ of $\mathcal{F}$-measurable, measure-preserving transformations $\tau_{x}: \Omega \rightarrow \Omega$, i.e., for any $y, z \in\RR^{d}$ and any $A\in\mathcal{F}$, we have
	\begin{eqnarray*}
	\tau_{y + z} = \tau_{y}\circ\tau_{z} &\text{ and }& \PP\left[ \tau_{y}(A)\right] = \PP\left[A \right] 
	\end{eqnarray*}
	\begin{enumerate}
		\item[] Stationary: $\ddot{H}_{k}(p,y,\tau_{z}\omega) = \ddot{H}_{k}(p,y + z, \omega)$ for all $y,z\in\RR^{d}$ and $\omega\in\Omega$;
		\item[] Ergodic: if $A\in\mathcal{F}$, $\tau_{z}(A) = A$ for all $z\in\RR^{d}$, then $\PP[A] \in \left\lbrace 0,1 \right\rbrace $.
	\end{enumerate}
	\item [(A2)] Coercivity. Fix any $1 \leq k \leq \ell$, then
	\begin{equation*}
	\liminf_{|p|\rightarrow \infty}\essinf\limits\limits_{(x,\omega)\in\RR^{d}\times\Omega} \left| \ddot{H}_{k}(p,x,\omega)\right| = \infty
	\end{equation*}
	\item [(A3)] Continuity and Boundedness. Fix any $\omega\in\Omega$ and any compact set $\mathrm{K}\subset\RR^{d}$, there exists a modulus of continuity $\rho(\cdot) = \rho_{\omega,\mathrm{K}}(\cdot)$, such that for $1 \leq k \leq \ell$,
	\begin{eqnarray*}
	\left| \ddot{H}_{k}(p,x,\omega) - \ddot{H}_{k}(q,y,\omega)\right| \leq \rho \left(|p - q| + |x - y| \right), && (p,x), (q,y) \in \mathrm{K}\times\RR^{d}
	\end{eqnarray*}
    And $\ddot{H}_{k}(p,x,\omega)$ is bounded on $\mathrm{K}\times\RR^{d}\times\Omega$.
	\item [(A4)] Stably pairing. For each $(x,\omega)$, $(\check{H}_{i}, \hat{H}_{i})$, $1 \leq i \leq \ell$, and $(\hat{H}_{j+1}, \check{H}_{j})$, $1\leq j \leq \ell - 1$, are all stable pairs (c.f. Definition \ref{the stable pair and the contact value}).
\end{enumerate}

\begin{remark}
	The assumptions (A1) - (A3) are standard setup in stochastic homogenization of Hamilton-Jacobi equations. Meanwhile, as mentioned before, the assumption (A4) is a natural way to exclude any strict saddle point, which could lead to non-homogenization (c.f. \cite{Feldman and Souganidis}).
\end{remark}

\begin{conjecture}\label{the conjecture}
	Let us fix a positive integer $\ell$ and assume (A1) - (A4), where $H_{\ell}(p,x,\omega)$ is the Hamiltonian defined in (\ref{the derivation of the nonconvex Hamiltonian through minmax formula}) - (\ref{the monotone ordering of Hamiltonians}), then the stochastic homogenization of $H_{\ell}(p,x,\omega)$ holds.
\end{conjecture}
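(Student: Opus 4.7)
The plan is to establish the regularly homogenizability of the auxiliary macroscopic problem for $H_{\ell}$, and then invoke the variant of the perturbed test function method to pass from regularly homogenizability to full stochastic homogenization. The argument proceeds by induction on $\ell$, the number of quasiconvex-quasiconcave pairs assembled via (\ref{the derivation of the nonconvex Hamiltonian through minmax formula}). For the base case $\ell = 1$, $H_{1} = \max\{\check{H}_{1}, \hat{H}_{1}\}$ is the maximum of a quasiconvex and a quasiconcave Hamiltonian forming a stable pair. Each piece is individually regularly homogenizable by the quasiconvex theory (Armstrong--Souganidis, Davini--Siconolfi) and its dual, and the stability forces a flat plateau in $\overline{H}_{1}$ at the common contact level. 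I would reproduce this plateau by approximating the auxiliary macroscopic problem from both sides: a subsolution built from $\check{H}_{1}$ whose superdifferential admits a uniform one-sided lower bound, together with a supersolution built from $\hat{H}_{1}$ whose subdifferential admits a uniform upper bound, then matching them through the contact value.

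For the inductive step I write $H_{\ell} = \max\bigl\{\check{H}_{\ell}, \min\{\hat{H}_{\ell}, H_{\ell - 1}\}\bigr\}$, viewing $H_{\ell}$ as $H_{\ell - 1}$ with a quasiconvex floor $\check{H}_{\ell}$ and a quasiconcave ceiling $\hat{H}_{\ell}$ pinned on. Starting from the regularly homogenizability of $H_{\ell - 1}$ granted by induction, I would transfer the property across the two new junctions by introducing auxiliary potential-like functions tailored to each junction level; these objects replace the bare potential $V(x,\omega)$ from the separable setting in \cite{Armstrong Tran and Yu CVPDE} but are constructed intrinsically from $\check{H}_{\ell}$ and $\hat{H}_{\ell}$ so that separability is not needed. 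The contact values of the stable pairs $(\hat{H}_{\ell}, \check{H}_{\ell - 1})$ and $(\check{H}_{\ell}, \hat{H}_{\ell})$ pin two new flat pieces of $\overline{H}_{\ell}$, and the monotonicity condition (A4) is what guarantees these new levels are compatible with the flat pieces already produced at earlier stages of the induction.

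The unevenness of the Hamiltonian in the gradient variable is handled by working simultaneously with $H$ and its even dual $H^{de}(p, x, \omega) := H(-p, x, \omega)$. The symmetry of the inf-sup formula for quasiconvex Hamiltonians translates subdifferential information of the solution of the auxiliary problem at $p$ into superdifferential information of the dual solution at $-p$, which supplies precisely the one-sided bound that the asymmetry between viscosity sub- and super-differentials would otherwise obstruct. This two-sided bookkeeping is what allows the decomposition into convex/concave components to be carried out at each inductive level.

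The main obstacle is the inductive step, and specifically the simultaneous tracking of sub- and super-differentials of the approximating solutions through all $\ell$ layers of $\max/\min$ operations while preserving the uniform one-sided bounds needed at each junction. The monotonicity condition (A4) is essential there: if it fails, strict saddle configurations arise between $\check{H}_{i}$ and $\hat{H}_{i+1}$ and the upper/lower bounds on $\overline{H}_{\ell}$ at the flat pieces collapse, so the very estimates that drive the induction become false. Verifying that the two-sided approximation can be iterated $\ell$ times without uncontrolled loss across the successive junctions, and that the auxiliary potentials at different levels can be patched coherently under only the stationary ergodic assumption, is where I expect the bulk of the technical work to lie.
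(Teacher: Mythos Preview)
The statement you are attempting to prove is labeled a \emph{Conjecture} in the paper and is not proved there under (A1)--(A4) alone. What the paper actually establishes is Theorem~\ref{the main theorem}, namely this conjecture together with the additional monotonicity condition~(\ref{the monotonicity condition}) on the contact values: $\overline{\mathrm{m}}_{1} \geq \cdots \geq \overline{\mathrm{m}}_{\ell}$ and $\underline{\mathrm{M}}_{1} \leq \cdots \leq \underline{\mathrm{M}}_{\ell}$. Your proposal repeatedly refers to ``the monotonicity condition (A4)'', but (A4) is the stably-pairing hypothesis, not~(\ref{the monotonicity condition}); these are distinct assumptions and (A4) by itself does not imply~(\ref{the monotonicity condition}).

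Your overall strategy---induction on $\ell$, sub/supersolutions with constrained super/sub\-differentials, the even dual $H^{de}$, auxiliary potential-like functions at each junction level---is precisely the paper's route to Theorem~\ref{the main theorem}. But the inductive step in that argument uses~(\ref{the monotonicity condition}) in an essential way. In the passage from $\ell_{0}$ to $\ell_{0}+\tfrac{1}{2}$ and then to $\ell_{0}+1$, the required lower and upper bounds on the effective Hamiltonian (Lemmas~\ref{the lower bound of the effective Hamiltonian at the inductive step} and~\ref{the upper bound of the effective Hamiltonian at the inductive step in the last half}) are obtained by comparing with min-max combinations of effective Hamiltonians of \emph{shorter} towers, and the identification of those combinations with the target expression hinges on the ordering of the $\overline{\mathrm{m}}_{k}$ and $\underline{\mathrm{M}}_{k}$. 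The paper says explicitly that ``the corresponding results are not valid once the assumption~(\ref{the monotonicity condition}) breaks down.'' So if you are genuinely working only under (A1)--(A4), your inductive step has a real gap at exactly the place where you write that ``the monotonicity condition (A4) is what guarantees these new levels are compatible'': (A4) alone does not give that compatibility, and without~(\ref{the monotonicity condition}) the two-sided bounds you need at each new junction are not available by the method you outline. If instead you intended to assume~(\ref{the monotonicity condition}) all along, then your sketch matches the paper's proof of Theorem~\ref{the main theorem}, but you are proving that theorem, not the conjecture.
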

In this article, our goal is to prove the above conjecture under the following monotonicity condition (\ref{the monotonicity condition}).

\begin{definition}[the stable pair and the contact value]\label{the stable pair and the contact value}
	Let $V(p), \Lambda(p): \RR^{d} \rightarrow \RR$, such that $V(\cdot)$ is quasiconvex and $\Lambda(\cdot)$ is quasiconcave. Let us denote $\Delta := \left\lbrace p\in\RR^{d}\big| \Lambda(p) \geq V(p)\right\rbrace $. We call both $(V,\Lambda)$ and $(\Lambda, V)$ \textit{stable pairs} if either (i) or (ii) of the following holds.
	\begin{eqnarray*}
		(i) \hspace{2mm} \Delta = \emptyset; && (ii) \hspace{2mm} \Delta \neq \emptyset, \hspace{2mm} V|_{\partial\Delta} \text{ is a constant, and } V|_{\Delta^{c}} > V|_{\partial\Delta}
	\end{eqnarray*}
	If $(V,\Lambda)$ (resp. $(\Lambda, V)$) is a stable pairs, we call $\text{\textstretchc}^{V(\cdot)}_{\Lambda(\cdot)}$ (resp. $\text{\textstretchc}_{V(\cdot)}^{\Lambda(\cdot)}$), which is defined as below, the \textit{contact value} between $V(\cdot)$ and $\Lambda(\cdot)$ (resp. between $\Lambda(\cdot)$ and $V(\cdot)$).
	\begin{eqnarray*}
		\text{\textstretchc}_{\Lambda(\cdot)}^{V(\cdot)} := \begin{cases}
			\min V & \text{ if } \Delta = \emptyset\\
			V|_{\partial\Delta} & \text{ if } \Delta \neq \emptyset
		\end{cases}, && \text{\textstretchc}^{\Lambda(\cdot)}_{V(\cdot)} := \begin{cases}
		\max \Lambda & \text{ if } \Delta = \emptyset\\
		\Lambda|_{\partial\Delta} & \text{ if } \Delta \neq \emptyset
	\end{cases}
\end{eqnarray*}	
\end{definition}

\begin{figure}
\centering
\includegraphics[width=0.4\linewidth]{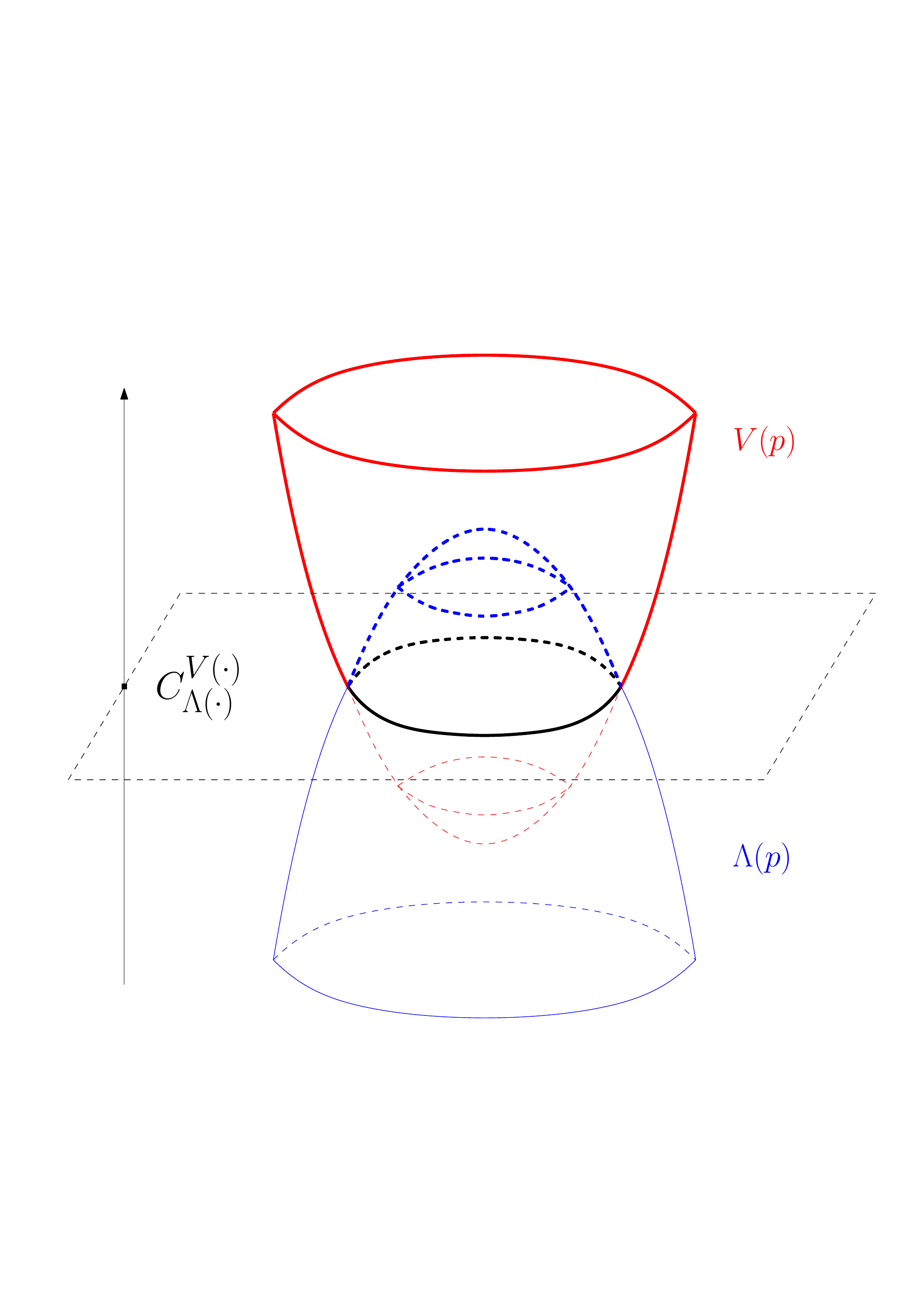}
\hspace{1cm}
\includegraphics[width=0.43\linewidth]{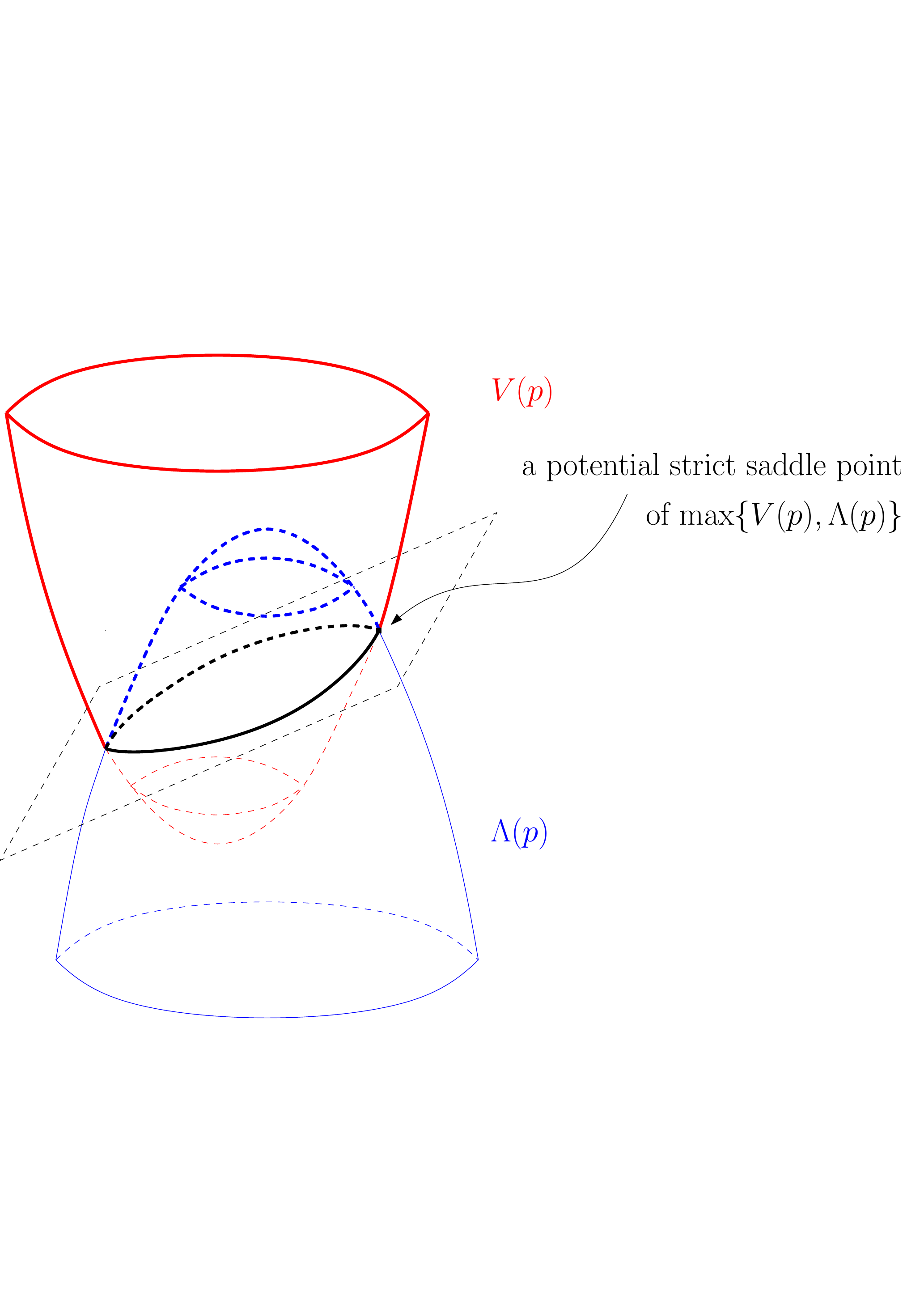}
\caption{A stable pair and its contact value v.s. an unstable pair and the strict saddle point}
\label{fig:stablepair}
\end{figure}

\begin{notation}
	Based on the Definition \ref{the stable pair and the contact value} and the assumption (A4), let us denote 
	\begin{eqnarray*}
		\mathrm{m}_{k}(x,\omega) := \text{\textstretchc}_{\hat{H}_{k}(\cdot,x,\omega)}^{\check{H}_{k}(\cdot,x,\omega)} \hspace{2mm}\text{ and } \hspace{2mm}
		\overline{\mathrm{m}}_{k} := \esssup\limits\limits_{(x,\omega)\in\RR^{d}\times\Omega} \mathrm{m}_{k}(x,\omega)
        &\text{for}& 1 \leq k \leq \ell
	\end{eqnarray*}
	Next, for the simplicity of the notations, we set $\check{H}_{0} := \infty$ and denote
	\begin{eqnarray*}
		\mathrm{M}_{k}(x,\omega) := 
			\text{\textstretchc}_{\check{H}_{k-1}(\cdot,x,\omega)}^{\hat{H}_{k}(\cdot,x,\omega)} \hspace{2mm}\text{and}\hspace{2mm}
		\underline{\mathrm{M}}_{k} := \essinf\limits\limits_{(x,\omega)\in\RR^{d}\times\Omega} \mathrm{M}_{k}(x,\omega) &\text{for}& 1\leq k \leq \ell
\end{eqnarray*}
Let us denote by (\ref{the monotonicity condition}) the monotonicity condition as follows.
\begin{equation}\label{the monotonicity condition}
\overline{\mathrm{m}}_{1} \geq \overline{\mathrm{m}}_{2} \geq \cdots \geq \overline{\mathrm{m}}_{\ell} \hspace{1cm}\text{ and }\hspace{1cm} \underline{\mathrm{M}}_{1} \leq \underline{\mathrm{M}}_{2} \leq \cdots \leq \underline{\mathrm{M}}_{\ell} \tag{M}
\end{equation}
\end{notation}

\begin{theorem}\label{the main theorem}
	Assume (A1) - (A4) and (\ref{the monotonicity condition}), let $H_{\ell}(p,x,\omega)$ be the Hamiltonian defined in (\ref{the derivation of the nonconvex Hamiltonian through minmax formula}) - (\ref{the monotone ordering of Hamiltonians}) and let $u_{0}\in\text{BUC}(\RR^{d})$. For any $\epsilon > 0$ and $\omega\in\Omega$, let $u^{\epsilon}(x,t,\omega)$ be the unique viscosity solution of the Hamilton-Jacobi equation.
	\begin{equation*}
	\begin{cases}
	u_{t}^{\epsilon} + H_{\ell}(Du^{\epsilon},\frac{x}{\epsilon},\omega) = 0 & (x,t)\in\RR^{d}\times (0,\infty)\\
	u^{\epsilon}(x,0,\omega) = u_{0}(x) & x\in\RR^{d}
	\end{cases}
	\end{equation*}
	Then there exists an effective Hamiltonian $\overline{H_{\ell}}(p)\in\text{C}(\RR^{d})$ with $\lim\limits_{|p|\rightarrow\infty}\overline{H_{\ell}}(p) = \infty$, such that for a.e. $\omega\in\Omega$, $\lim\limits_{\epsilon\rightarrow 0}u^{\epsilon}(x,t,\omega) = \overline{u}(x,t)$ locally uniformly in $\RR^{d}\times(0,\infty)$, where $\overline{u}(x,t)$ is the unique viscosity solution of the homogenized Hamilton-Jacobi equation.
	\begin{equation*}
	\begin{cases}
	\overline{u}_{t} + \overline{H_{\ell}}(D\overline{u}) = 0 & (x,t)\in\RR^{d}\times(0,\infty)\\
	\overline{u}(x,0) = u_{0}(x) & x\in\RR^{d}
	\end{cases}
	\end{equation*}
	Furthermore, the effective Hamiltonian $\overline{H_{\ell}}(p)$ is expressed as below.
	\begin{equation*}
	\overline{H_{\ell}} = \max\left\lbrace \overline{\check{H}_{\ell}}, \overline{\mathrm{m}}_{\ell}, \min\left\lbrace \overline{\hat{H}_{\ell}}, \underline{\mathrm{M}}_{\ell}, \cdots, \max\left\lbrace \overline{\check{H}_{2}}, \overline{\mathrm{m}}_{2}, \min\left\lbrace \overline{\hat{H}_{2}}, \underline{\mathrm{M}}_{2}, \max\left\lbrace \overline{\check{H}_{1}}, \overline{\mathrm{m}}_{1},\overline{\hat{H}_{1}} \right\rbrace \right\rbrace\right\rbrace\cdots \right\rbrace  \right\rbrace
	\end{equation*}
	where $\overline{\ddot{H}_{i}}(p)$ is the effective Hamiltonian of $\ddot{H}_{i}(p,x,\omega)$, $\ddot{}$ is either $\check{}$ or $\hat{}$, $i = 1 ,\cdots, \ell$.
\end{theorem}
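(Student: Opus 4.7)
The plan is to reduce the stochastic homogenization to proving the regularly homogenizability of $H_\ell$ via the auxiliary macroscopic problem \eqref{the auxiliary macroscopic problem}, and then invoke Proposition \ref{the homogenization based on regularly homogenizability}, which packages the variant of the perturbed test function method referenced in the introduction. The explicit formula for $\overline{H_\ell}$ will emerge alongside the regularly homogenizability argument, since regularly homogenizability identifies the effective Hamiltonian levelwise. The whole proof proceeds by induction on $\ell$, the inductive hypothesis asserting both the regularly homogenizability of the intermediate Hamiltonian built from the inner $(\ell-1)$ pairs and the min-max expression for its effective Hamiltonian.

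For the base case $\ell=1$, the Hamiltonian is $H_1 = \max\{\check{H}_1, \hat{H}_1\}$ with a stable pair. Here I would combine the known regularly homogenizability of quasiconvex and quasiconcave Hamiltonians (from \cite{Armstrong and Souganidis IMRN} and its dual) with the decomposition philosophy sketched in the overview: on momenta $p$ that should fall on a flat piece of $\overline{H_1}$ at height $\overline{\mathrm{m}}_1$, I construct out of the auxiliary macroscopic solutions of $\check{H}_1$ and $\hat{H}_1$ a subsolution of \eqref{the auxiliary macroscopic problem} for $H_1$ whose superdifferential has a uniform lower bound, and a supersolution whose subdifferential has a uniform upper bound. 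Lemma \ref{the upper bound for the flat piece the base case} realizes exactly this construction, while Lemma \ref{the equivalence of minimum level sets for convex and concave Hamiltonians} confirms that only a vertical adjustment is needed because the horizontal oscillations do not affect where the flat region occurs.

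For the inductive step from $\ell-1$ to $\ell$, write $H_\ell = \max\{\check{H}_\ell,\min\{\hat{H}_\ell, H_{\ell-1}\}\}$. The monotone ordering \eqref{the monotone ordering of Hamiltonians}, together with the stably paired assumption (A4) and the monotonicity condition \eqref{the monotonicity condition}, keeps the junction heights $\overline{\mathrm{m}}_k$ and $\underline{\mathrm{M}}_k$ in the correct order as we ascend the min-max tower. Applied at the new layer, the upper bound on $\overline{H_\ell}$ is furnished by Lemma \ref{the upper bound of the effective Hamiltonian at the intermediate inductive step} and the matching lower bound by Lemma \ref{the lower bound of the effective Hamiltonian at the intermediate inductive step in the second half}. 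Mechanically, I repeat the base-case construction with the inductively homogenized $\overline{H_{\ell-1}}$ playing the role of the inner Hamiltonian and with the auxiliary potential energy family associated to the new junction level $\overline{\mathrm{m}}_\ell$ or $\underline{\mathrm{M}}_\ell$ in place of the bare potential. To deal with the unevenness of $H_\ell$ in $p$, I run every such construction simultaneously for $H_\ell$ and its even dual $H_\ell^{de}(p,x,\omega) := H_\ell(-p,x,\omega)$, using Lemmas \ref{the symmetry between Hamiltonian and its dual in evenness and negativity} and \ref{the symmetry of effective Hamiltonians in evenness} to trade subdifferential bounds for superdifferential bounds under the $p \mapsto -p$ symmetry.

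The main obstacle I expect is precisely this inductive step: gluing the sub- and supersolutions constructed at the junction height $\overline{\mathrm{m}}_\ell$ (resp.\ $\underline{\mathrm{M}}_\ell$) to the auxiliary macroscopic solutions associated with $\overline{H_{\ell-1}}$, while preserving the one-sided differential bounds that make comparison with the max (resp.\ min) against $\check{H}_\ell$ (resp.\ $\hat{H}_\ell$) effective. Condition \eqref{the monotonicity condition} is essential here: without it the flat pieces stacked at successive levels need not nest correctly, and the bounds supplied by Lemmas \ref{the upper bound of the effective Hamiltonian at the intermediate inductive step} and \ref{the lower bound of the effective Hamiltonian at the intermediate inductive step in the second half} would break down, as the introduction already warns. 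A secondary technical annoyance is that viscosity solutions are only semi-differentiable, so all bounds must be expressed as one-sided inequalities along suitable test cones, and the duality argument handling unevenness must be installed in a form compatible with that asymmetry.
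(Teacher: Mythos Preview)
Your proposal is correct and follows the same overall architecture as the paper: reduce Theorem~\ref{the main theorem} to the regularly homogenizability of $H_\ell$ for every $p$ (which is Proposition~\ref{a summary of the regularly homogenizability}), then invoke Proposition~\ref{the homogenization based on regularly homogenizability}; and prove regularly homogenizability by induction on $\ell$, with the base case $\ell=1$ handled by the constrained-differential machinery and the flat-piece Lemmas~\ref{the equivalence of minimum level sets for convex and concave Hamiltonians}--\ref{the upper bound for the flat piece the base case}.

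One point where your description diverges from the paper's execution: in the inductive step the paper does \emph{not} repeat the base-case construction with an auxiliary potential family at the new junction level. Instead, the passage $\ell_0\to\ell_0+1$ is split into two half-steps through $H_{\ell_0+\frac12}=\min\{\hat H_{\ell_0+1},H_{\ell_0}\}$, and each half-step is handled by applying the inductive hypothesis to a \emph{shifted} collection of $\ell_0$ pairs (dropping $\hat H_1$ or $\check H_1$ and re-indexing), then using straight comparison. The strict form (\ref{the strict monotonicity condition}) of the monotonicity condition, obtained from (\ref{the monotonicity condition}) by the perturbation Lemma~\ref{the monotonicity condition can be replaced by the strict monotonicity condition by approximation}, is what makes these shifted collections still satisfy the hypotheses. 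So the ``main obstacle'' you anticipate---gluing sub/supersolutions at the new junction while preserving one-sided differential bounds---is in fact bypassed: all the hard analytical work with constrained differentials (Lemmas~\ref{subsolutions with constraint supergradient level set convex}--\ref{supersolutions with constraint subgradient level set concave}) and the duality Lemmas~\ref{the symmetry between Hamiltonian and its dual in evenness and negativity}--\ref{the symmetry of effective Hamiltonians in evenness} is confined to the base case, and the induction itself is soft comparison plus the min-max identity of Lemma~\ref{the existence of monotone ordering in minmax formula}.
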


\section{Preliminaries}

Let us start with a min-max identity that justifies the generality of the ordering assumption in (\ref{the monotone ordering of Hamiltonians}).

\begin{lemma}\label{the existence of monotone ordering in minmax formula}
	Fix a positive integer $N$ and real numbers $a_{i}$, $b_{i}$, $1 \leq i \leq N$. Denote
	\begin{eqnarray*}
		\upalpha_{k} := \max_{1 \leq j \leq k} a_{j} \hspace{4mm}\text{and} \hspace{4mm} \upbeta_{k} := \min_{1\leq j \leq k}b_{j} && 1 \leq k \leq N
	\end{eqnarray*}
	then $\mathrm{I}_{N} = \mathrm{II}_{N}$, where
	\begin{eqnarray*}
		\mathrm{I}_{N} &:=& \max\left\lbrace a_{1},\min\left\lbrace b_{1}, \max\left\lbrace a_{2}, \min\left\lbrace b_{2}, \cdots, \max\left\lbrace a_{N}, b_{N}\right\rbrace \cdots \right\rbrace \right\rbrace  \right\rbrace \right\rbrace \\
		\mathrm{II}_{N} &:=& \max\left\lbrace \upalpha_{1},\min\left\lbrace \upbeta_{1}, \max\left\lbrace \upalpha_{2}, \min\left\lbrace \upbeta_{2}, \cdots, \max\left\lbrace \upalpha_{N}, \upbeta_{N}\right\rbrace \cdots \right\rbrace \right\rbrace  \right\rbrace \right\rbrace 
	\end{eqnarray*}
\end{lemma}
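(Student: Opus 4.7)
The strategy is to convert $\mathrm{I}_N$ into $\mathrm{II}_N$ by successively replacing $a_k \mapsto \upalpha_k$ and $b_k \mapsto \upbeta_k$ from the outside inward, using a pair of local absorption identities: an outer $\max\{\upalpha_{k-1},\cdot\}$ can be ``pushed'' into the inner $\max\{a_k,\cdot\}$ without changing the value, and dually for $\min$.

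First I would establish two elementary pointwise identities, valid for all reals $\alpha,\beta,c,Y$:
\begin{equation*}
\max\{\alpha,\min\{\beta,\max\{c,Y\}\}\}=\max\{\alpha,\min\{\beta,\max\{\alpha,c,Y\}\}\}, \tag{A}
\end{equation*}
\begin{equation*}
\min\{\alpha,\max\{\beta,\min\{c,Y\}\}\}=\min\{\alpha,\max\{\beta,\min\{\alpha,c,Y\}\}\}. \tag{B}
\end{equation*}
Identity (A) follows from a two-line case analysis: enlarging the inner $\max$ gives LHS $\leq$ RHS; for the reverse, if RHS is strictly larger than $\alpha$ then $\max\{\alpha,c,Y\}>\alpha$, forcing $\max\{c,Y\}=\max\{\alpha,c,Y\}$ and hence equality. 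Identity (B) is dual (via $x\mapsto -x$). I would also record the boundary form $\min\{\alpha,\max\{\beta,c\}\}=\min\{\alpha,\max\{\beta,\min\{\alpha,c\}\}\}$, which is the $Y=+\infty$ case of (B) and is needed at the innermost depth.

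Next, I would argue by induction on $k\in\{1,\ldots,N-1\}$ that $\mathrm{I}_N$ is unchanged once $a_j,b_j$ are replaced by $\upalpha_j,\upbeta_j$ for every $j\leq k$. The base $k=1$ is automatic since $\upalpha_1=a_1$, $\upbeta_1=b_1$. For the inductive step, the layer currently carrying the $k$-th pair reads
\[
\max\bigl\{\upalpha_{k-1},\,\min\{\upbeta_{k-1},\,\max\{a_k,\,\min\{b_k,X\}\}\}\bigr\}
\]
for a deeper expression $X$. Applying (A) with parameters $(\alpha,\beta,c,Y)=(\upalpha_{k-1},\upbeta_{k-1},a_k,\min\{b_k,X\})$ upgrades $a_k$ to $\max\{\upalpha_{k-1},a_k\}=\upalpha_k$, and a subsequent application of (B) with $(\alpha,\beta,c,Y)=(\upbeta_{k-1},\upalpha_k,b_k,X)$ upgrades $b_k$ to $\min\{\upbeta_{k-1},b_k\}=\upbeta_k$.

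Finally, at the innermost pair $(a_N,b_N)$ the local pattern is the slightly asymmetric $\max\{\upalpha_{N-1},\min\{\upbeta_{N-1},\max\{a_N,b_N\}\}\}$ with no deeper $X$. A last use of (A) promotes $a_N$ to $\upalpha_N$, and then the boundary form of (B) promotes $b_N$ to $\upbeta_N$, yielding exactly $\mathrm{II}_N$. The only (minor) obstacle is keeping the nested bookkeeping and the asymmetric innermost term straight; the algebraic heart of the argument is the short case analysis behind identity (A).
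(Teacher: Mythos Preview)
Your argument is correct and takes a genuinely different route from the paper. The paper computes both $\mathrm{I}_N$ and $\mathrm{II}_N$ explicitly via a case analysis: it introduces the first index $k^*$ at which $\upalpha_{k^*} > \upbeta_{k^*}$, treats the two extreme cases ($k^*$ undefined and $k^*=1$) separately, and for intermediate $k^*$ splits further into sub-cases according to whether $\upalpha_{k^*}$ lies below $\upbeta_{k^*-1}$ and whether the new crossing is caused by $a_{k^*}$ or $b_{k^*}$. In each sub-case it unwinds the nested expression from the inside out until both sides collapse to the same number. Your approach instead never evaluates the expressions; it transforms $\mathrm{I}_N$ into $\mathrm{II}_N$ layer by layer using the two absorption identities (A) and (B), which are purely lattice-theoretic and require only a one-line case check. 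This is cleaner and more robust: it sidesteps the somewhat delicate bookkeeping of the paper's sub-cases, and the same mechanism would prove analogous identities in any distributive lattice. The paper's method, on the other hand, has the mild advantage of actually identifying the value of $\mathrm{I}_N=\mathrm{II}_N$ (namely $\upalpha_{k^*}$ or $\upbeta_{k^*-1}$), which your argument does not directly yield.
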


\begin{proof}
	Let us first discuss two extreme cases.\\
	\underline{Case 1:} $\upbeta_{1} \geq \upbeta_{2} \geq \cdots \geq \upbeta_{N-1} \geq \upbeta_{N} \geq \upalpha_{N} \geq \upalpha_{N-1} \geq \cdots \geq \upalpha_{2} \geq \upalpha_{1}$. This means that $a_{i} \leq b_{j}$ for any $i,j$. Then
	\begin{equation*}
	\mathrm{II}_{N} = \upbeta_{N} = \min_{1 \leq i \leq N}b_{i} = \mathrm{I}_{N}
	\end{equation*}
	\underline{Case 2:} $\upalpha_{N} \geq \upalpha_{N-1} \geq \cdots \geq \upalpha_{2} \geq \upalpha_{1} > \upbeta_{1} \geq \upbeta_{2} \geq \cdots \geq \upbeta_{N-1} \geq \upbeta_{N}$. This means that $a_{1} > b_{1}$. Then
	\begin{equation*}
	\mathrm{II}_{N} = \upalpha_{1} = a_{1} = \mathrm{I}_{N}
	\end{equation*}
	Next, let us consider other intermediate cases. We denote
	\begin{equation*}
	k^{*} := \min\left\lbrace 1 \leq k \leq N\big| \upalpha_{k} > \upbeta_{k}\right\rbrace
	\end{equation*}
	From the previous discussions, it suffices to consider $2 \leq k^{*} \leq N$. By the above definition, we have either (i): $a_{k^{*}} = \upalpha_{k^{*}} > \upalpha_{k^{*} - 1}$; or (ii): $a_{k^{*}} \leq \upalpha_{k^{*} - 1}$ and $b_{k^{*}} = \upbeta_{k^{*}} < \upbeta_{k^{*} - 1}$.\\ 
    \underline{Case A:} $\upbeta_{k^{*}} < \upalpha_{k^{*}} < \upbeta_{k^{*} - 1}$. In this case, we have $\mathrm{II}_{N} = \upalpha_{k^{*}}$.\\
    If (i) holds, then, $\mathrm{II}_{N} = a_{k^{*}}$. On the other hand, we have $a_{k^{*}} > b_{k^{*}}$, then
    \begin{eqnarray*}
    	\max\left\lbrace a_{k^{*}}, \min\left\lbrace b_{k^{*}},\cdots, \max\left\lbrace a_{N}, b_{N}\right\rbrace \cdots \right\rbrace \right\rbrace = a_{k^{*}}
    \end{eqnarray*}
    therefore, we have the following relation and get the desired conclusion.
    \begin{eqnarray*}
    	\mathrm{I}_{N} &=& \max\left\lbrace a_{1}, \min\left\lbrace b_{1},\cdots \max\left\lbrace a_{k^{*}-2}, \min\left\lbrace b_{k^{*}-2}, \max\left\lbrace a_{k^{*} - 1}, \min \left\lbrace b_{k^{*} - 1}, a_{k^{*}}\right\rbrace \right\rbrace \right\rbrace \right\rbrace \cdots \right\rbrace \right\rbrace \\
    	&=& \max\left\lbrace a_{1}, \min\left\lbrace b_{1},\cdots \max\left\lbrace a_{k^{*}-2}, \min\left\lbrace b_{k^{*}-2}, \max\left\lbrace a_{k^{*} - 1}, a_{k^{*}} \right\rbrace \right\rbrace \right\rbrace \cdots \right\rbrace \right\rbrace \\
    	&=& \max\left\lbrace a_{1}, \min\left\lbrace b_{1},\cdots \max\left\lbrace a_{k^{*}-2}, \min\left\lbrace b_{k^{*}-2}, a_{k^{*}}  \right\rbrace \right\rbrace \cdots \right\rbrace \right\rbrace \\
    	&=& \cdots = a_{k^{*}} = \mathrm{II}_{N}
    \end{eqnarray*}
    If (ii) holds, let us denote
    \begin{equation*}
    X := \min\left\lbrace b_{k^{*}}, \max\left\lbrace a_{k^{*} + 1},\cdots, \min\left\lbrace b_{N-1}, \max\left\lbrace a_{N}, b_{N} \right\rbrace \right\rbrace \cdots\right\rbrace \right\rbrace \leq b_{k^{*}} = \min_{1 \leq j \leq k^{*}} b_{j}
    \end{equation*}
    By the definition of $k^{*}$, we have that
    \begin{eqnarray*}
    	a_{j} \leq \upalpha_{j} \leq \upalpha_{k^{*}} = \upalpha_{k^{*} - 1} \leq \upbeta_{k^{*} - 1}\leq \upbeta_{i} \leq b_{i}, && 1 \leq i \leq k^{*} - 1,\hspace{2mm} 1 \leq j \leq k^{*} 
    \end{eqnarray*}
    Then
    \begin{eqnarray*}
    	\mathrm{I}_{N} &=& \max\left\lbrace a_{1}, \min\left\lbrace b_{1},\cdots \max\left\lbrace a_{k^{*}-1}, \min\left\lbrace b_{k^{*}-1}, \max\left\lbrace a_{k^{*}}, X \right\rbrace \right\rbrace \right\rbrace \cdots \right\rbrace \right\rbrace \\
    	&=& \max\left\lbrace a_{1}, \min\left\lbrace b_{1},\cdots \max\left\lbrace a_{k^{*}-1},  a_{k^{*}}, X \right\rbrace  \cdots \right\rbrace \right\rbrace\\
    	&=& \cdots\\
    	&=& \max\left\lbrace a_{1},\cdots, a_{k^{*} - 1}, a_{k^{*}}, X\right\rbrace \\
    	&=& \max\left\lbrace \upalpha_{k^{*}}, X\right\rbrace = \upalpha_{k^{*}} = \mathrm{II}_{N}
    \end{eqnarray*}
    \underline{Case B:} $\upalpha_{k^{*}} > \upbeta_{k^{*} - 1}$. In this case, we have $\mathrm{II}_{N} = \upbeta_{k^{*} - 1}$. By the definition of $k^{*}$, the case (ii) is vacuum, so we must have (i) holds. Let us denote
    \begin{equation*}
    Y := \max\left\lbrace a_{k^{*}}, \min\left\lbrace b_{k^{*}},\cdots, \max\left\lbrace a_{N}, b_{N} \right\rbrace \cdots \right\rbrace \right\rbrace \geq a_{k^{*}} \geq \max_{1 \leq i \leq k^{*}} a_{i}
    \end{equation*}
    By the definition of $k^{*}$, we have that
    \begin{eqnarray*}
    	b_{i} \geq \upbeta_{i} \geq \upbeta_{k^{*} - 1} \geq \upalpha_{k^{*} - 1} \geq \upalpha_{j} \geq a_{j}, && 1 \leq i, j \leq k^{*} - 1
    \end{eqnarray*}
    Then
    \begin{eqnarray*}
    	\mathrm{I}_{N} &=& \max\left\lbrace a_{1}, \min\left\lbrace b_{1},\cdots \min\left\lbrace b_{k^{*} - 2}, \max\left\lbrace a_{k^{*}-1}, \min\left\lbrace b_{k^{*}-1}, Y \right\rbrace \right\rbrace\right\rbrace  \cdots \right\rbrace \right\rbrace \\
    	&=& \max\left\lbrace a_{1}, \min\left\lbrace b_{1},\cdots  \min\left\lbrace b_{k^{*} - 2}, b_{k^{*}-1}, Y \right\rbrace  \cdots \right\rbrace \right\rbrace\\
    	&=& \cdots\\
    	&=& \min\left\lbrace b_{1}, \cdots, b_{k^{*} - 2}, b_{k^{*} - 1}, Y\right\rbrace \\
    	&=& \min\left\lbrace \upbeta_{k^{*} - 1}, Y\right\rbrace = \upbeta_{k^{*} - 1} = \mathrm{II}_{N}
    \end{eqnarray*}
\end{proof}

\begin{corollary}\label{the reordering of the Hamiltonians}
	Fix a positive integer $\ell$, quasiconvex Hamiltonians $\check{H}_{i}(p,x,\omega)$, $i = 1, \cdots, \ell$ and quasiconcave Hamiltonians $\hat{H}_{j}(p,x,\omega)$, $j = 1, \cdots, \ell$. Let us denote
	\begin{eqnarray*}
	\check{H}_{k}^{*}(p,x,\omega) := \max_{k \leq j \leq \ell} \check{H}_{j}(p,x,\omega) &\text{and}& \hat{H}_{k}^{*}(p,x,\omega) := \min_{k \leq j \leq \ell} \hat{H}_{j}(p,x,\omega)
	\end{eqnarray*}
	then
	\begin{eqnarray*}
	&&\max\left\lbrace \check{H}_{\ell}, \min\left\lbrace \hat{H}_{\ell},  \cdots \max\left\lbrace \check{H}_{2}, \min\left\lbrace \hat{H}_{2}, \max\left\lbrace \check{H}_{1}, \hat{H}_{1} \right\rbrace \right\rbrace  \right\rbrace \cdots \right\rbrace \right\rbrace\\
	&=& \max\left\lbrace \check{H}_{\ell}^{*}, \min\left\lbrace \hat{H}_{\ell}^{*},  \cdots \max\left\lbrace \check{H}_{2}^{*}, \min\left\lbrace \hat{H}_{2}^{*}, \max\left\lbrace \check{H}_{1}^{*}, \hat{H}_{1}^{*} \right\rbrace \right\rbrace  \right\rbrace \cdots \right\rbrace \right\rbrace 
	\end{eqnarray*}
\end{corollary}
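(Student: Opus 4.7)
The plan is to reduce the corollary to a direct pointwise application of the preceding min--max identity (Lemma \ref{the existence of monotone ordering in minmax formula}). Both sides of the asserted identity are real-valued functions of $(p,x,\omega)$, so it suffices to fix such a triple and verify equality of real numbers. With $(p,x,\omega)$ fixed, set $a_{i} := \check{H}_{\ell + 1 - i}(p,x,\omega)$ and $b_{i} := \hat{H}_{\ell + 1 - i}(p,x,\omega)$ for $1\leq i \leq \ell$, i.e.\ reverse the indexing so that the \emph{innermost} pair of the nested min--max becomes $\{a_{\ell},b_{\ell}\} = \{\check{H}_{1},\hat{H}_{1}\}$ and the \emph{outermost} pair is $\{a_{1},b_{1}\} = \{\check{H}_{\ell},\hat{H}_{\ell}\}$. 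Under this relabelling the nested expression $\mathrm{I}_{\ell}$ from Lemma \ref{the existence of monotone ordering in minmax formula} is exactly the left-hand side of the corollary.

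Next I would compute the corresponding $\upalpha_{k}$ and $\upbeta_{k}$ under this substitution:
\begin{equation*}
\upalpha_{k} = \max_{1 \leq j \leq k} a_{j} = \max_{1 \leq j \leq k} \check{H}_{\ell + 1 - j}(p,x,\omega) = \max_{\ell + 1 - k \leq i \leq \ell} \check{H}_{i}(p,x,\omega) = \check{H}_{\ell + 1 - k}^{*}(p,x,\omega),
\end{equation*}
and identically $\upbeta_{k} = \hat{H}_{\ell + 1 - k}^{*}(p,x,\omega)$. Consequently the quantity $\mathrm{II}_{\ell}$ from Lemma \ref{the existence of monotone ordering in minmax formula} coincides with the right-hand side of the corollary. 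Invoking $\mathrm{I}_{\ell} = \mathrm{II}_{\ell}$ from Lemma \ref{the existence of monotone ordering in minmax formula} then finishes the verification at the fixed $(p,x,\omega)$, and since $(p,x,\omega)$ was arbitrary, the identity of functions follows.

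There is essentially no obstacle here: the only subtlety is the indexing convention, since in Lemma \ref{the existence of monotone ordering in minmax formula} the outermost index is $1$ and the partial maxima/minima run from $1$ up to $k$, whereas in the corollary the outermost index is $\ell$ and the partial maxima/minima run from $k$ up to $\ell$. Once the order-reversing substitution $i \leftrightarrow \ell + 1 - i$ is applied, the two formulations line up exactly, so the main task in a formal write-up is simply to record the substitution carefully and point to Lemma \ref{the existence of monotone ordering in minmax formula}. Note that neither quasiconvexity nor quasiconcavity is used in the argument, since Lemma \ref{the existence of monotone ordering in minmax formula} holds for arbitrary real numbers; the corollary only uses the algebraic form of the min--max expression.
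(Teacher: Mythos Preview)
Your proposal is correct and follows exactly the same approach as the paper: fix $(p,x,\omega)$, apply the order-reversing substitution $a_i := \check{H}_{\ell+1-i}(p,x,\omega)$, $b_i := \hat{H}_{\ell+1-i}(p,x,\omega)$, and invoke Lemma~\ref{the existence of monotone ordering in minmax formula}. Your write-up is actually more careful than the paper's, which simply records the substitution (with a typo, writing $\check{H}$ for $b_i$) and asserts the conclusion.
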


\begin{proof}
	Fix any $(p,x,\omega)\in\RR^{d}\times\RR^{d}\times\Omega$, let us set the following $a_{i}$, $b_{i}$, $1 \leq i \leq \ell$.
	\begin{eqnarray*}
	a_{i} := \check{H}_{\ell - i + 1}(p,x,\omega) &\text{and}& b_{i} := \check{H}_{\ell - i + 1}(p,x,\omega)
	\end{eqnarray*}
	The conclusion follows directly from the Lemma \ref{the existence of monotone ordering in minmax formula}.
\end{proof}

\begin{definition}[Definition 1.1 in \cite{Armstrong Tran and Yu JDE}]\label{the definition of regularly homogenizable}
	A Hamiltonian $H(p,x,\omega)$ is called \textit{regularly homogenizable} at $p_{0}\in\RR^{d}$ if there exists a number $\overline{H}(p_{0})$, such that
	\begin{equation*}
	\PP\left[ \omega\in\Omega \Bigg| \limsup_{\lambda\rightarrow 0}\max_{|x|\leq\frac{R}{\lambda}} \left| \lambda v_{\lambda}(x,p_{0},\omega) + \overline{H}(p_{0})\right| = 0, \text{ for any } R > 0 \right] = 1
	\end{equation*}
	where $\lambda > 0$, $\omega\in\Omega$, and $v_{\lambda}(x,p_{0},\omega)$ is the unique viscosity solution of the equation:
	\begin{eqnarray}\label{the auxiliary macroscopic problem}
	\lambda v_{\lambda} + H(p_{0} + Dv_{\lambda},x,\omega) = 0, && x\in\RR^{d}
	\end{eqnarray}
	If $H(p,x,\omega)$ is regularly homogenizable for all $p\in\RR^{d}$, then the function $\overline{H}(p):\RR^{d} \rightarrow \RR$ is called the \textit{effective Hamiltonian}.
\end{definition}

\begin{proposition}[Lemma 5.1 in \cite{Armstrong and Souganidis IMRN}]\label{the simpler version of regularly homogenizability}
	Under the assumption of (A1), a Hamiltonian $H(p,x,\omega)$ is regularly homogenizable at $p_{0}\in\RR^{d}$ if and only if there exists a number $\overline{H}(p_{0})$, such that
	\begin{equation*}
	\PP\left[ \omega\in\Omega \Bigg| \lim_{\lambda\rightarrow 0}\left| \lambda v_{\lambda}(0,p_{0},\omega) + \overline{H}(p_{0})\right| = 0 \right] = 1
	\end{equation*}
\end{proposition}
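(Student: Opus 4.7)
The plan is to derive uniform convergence of $\lambda v_\lambda + \overline{H}(p_0)$ on the enlarging ball $B(0,R/\lambda)$ from pointwise convergence at the origin by combining stationarity with a Lipschitz covering argument. The forward direction is trivial: restricting the supremum in Definition \ref{the definition of regularly homogenizable} to $x=0$ immediately recovers the hypothesis, so the entire work sits on the converse.

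First I would assemble two standard ingredients. Under (A1) and the uniqueness of the viscosity solution of \eqref{the auxiliary macroscopic problem}, stationarity lifts from the Hamiltonian to the solution: $v_\lambda(x,p_0,\omega) = v_\lambda(0,p_0,\tau_x \omega)$ for every $x\in\RR^d$ and a.e.\ $\omega$, so the law of $\omega \mapsto \lambda v_\lambda(x,p_0,\omega)$ is independent of $x$. Coercivity of $H$ (in the form of (A2) applied to the effective equation) gives a deterministic Lipschitz bound $\|D v_\lambda(\cdot,p_0,\omega)\|_\infty \le L$ uniform in $\lambda$ and $\omega$; together with the $L^\infty$ estimate $\|\lambda v_\lambda\|_\infty \le C$ obtained from comparison with constant sub/supersolutions, the map $x \mapsto \lambda v_\lambda(x,p_0,\omega) + \overline{H}(p_0)$ is bounded and $\lambda L$-Lipschitz.

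The core step is a covering argument that exploits precisely this scaling. Fix $R,\varepsilon > 0$ and choose a finite $(\varepsilon/\lambda)$-net $\{x_1^\lambda,\dots,x_N^\lambda\}$ of $B(0,R/\lambda)$ with $N \le C(R/\varepsilon)^d$ independent of $\lambda$. The Lipschitz bound then yields
\[
\sup_{|x|\le R/\lambda} |\lambda v_\lambda(x,p_0,\omega) + \overline{H}(p_0)| \;\le\; L\varepsilon \;+\; \max_{1\le i \le N} |\lambda v_\lambda(0,p_0,\tau_{x_i^\lambda}\omega) + \overline{H}(p_0)|.
\]
By stationarity each summand in the max has the same distribution as $|\lambda v_\lambda(0,p_0,\omega) + \overline{H}(p_0)|$, which converges to $0$ in $L^1(\PP)$ by bounded convergence applied to the hypothesized pointwise limit. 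A union bound over the $N = O(1)$ points then delivers convergence in probability of the maximum, and sending $\varepsilon \downarrow 0$ along a countable sequence passes this to the supremum on the left.

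The final step is the upgrade to almost-sure convergence. I would extract a subsequence $\lambda_n \downarrow 0$ fast enough that $\PP(\max_i |\,\cdot\,| > 2^{-n})$ is summable, apply Borel-Cantelli, and run a countable diagonal over rational $R > 0$ and $\varepsilon > 0$ to produce one event of full probability on which the conclusion of Definition \ref{the definition of regularly homogenizable} holds for every $R > 0$. The main obstacle is precisely this upgrade: because the net points $x_i^\lambda$ depend on $\lambda$, one cannot intersect a single countable family of good events indexed by a fixed lattice and invoke stationarity directly; it is the covering-plus-Borel-Cantelli combination, made quantitative by the $\lambda L$-Lipschitz scaling that keeps the covering count bounded uniformly in $\lambda$, that bridges distributional information at a single point to an almost-sure uniform statement on a ball whose radius blows up.
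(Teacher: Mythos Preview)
The paper does not supply its own proof of this proposition; it is quoted as Lemma~5.1 of Armstrong--Souganidis \cite{Armstrong and Souganidis IMRN} and used as a black box. So there is no in-paper argument to compare against, and the question is whether your sketch stands on its own.

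Your covering-plus-union-bound step is correct and is the standard way to pass from the hypothesis at the origin to convergence \emph{in probability} of $S_\lambda^R(\omega) := \sup_{|x|\le R/\lambda}|\lambda v_\lambda(x,p_0,\omega)+\overline{H}(p_0)|$. The gap is in the final upgrade. Borel--Cantelli applied to a summable subsequence $\{\lambda_n\}$ yields, on a full-measure event, $S_{\lambda_n}^R\to 0$ along that subsequence only; it says nothing about $\limsup_{\lambda\to 0}S_\lambda^R$ over the continuum, which is what Definition~\ref{the definition of regularly homogenizable} actually requires. To interpolate between $\lambda_{n+1}$ and $\lambda_n$ you would need a uniform modulus in $\lambda$ for $\lambda\mapsto\lambda v_\lambda$, and the available comparison estimate $|\lambda v_\lambda-\lambda' v_{\lambda'}|\lesssim |\lambda-\lambda'|/\lambda'$ forces $\lambda_n/\lambda_{n+1}\to 1$, which is in general incompatible with summability of a tail probability for which you have no rate.

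The missing ingredient is ergodicity, which you never invoke although it is part of (A1). The route in \cite{Armstrong and Souganidis IMRN} is to show that $\omega\mapsto\limsup_{\lambda\to 0}S_\lambda^R(\omega)$ is invariant under $\{\tau_y\}_{y\in\RR^d}$: from $v_\lambda(x,p_0,\tau_y\omega)=v_\lambda(x+y,p_0,\omega)$ and the $\lambda L$-Lipschitz bound one gets $|S_\lambda^R(\tau_y\omega)-S_\lambda^R(\omega)|\le L\lambda|y|\to 0$, so the $\limsup$ is unchanged by translation and hence a.s.\ equal to a deterministic constant. Your covering argument (or the simple observation that $S_\lambda^R\to 0$ in probability forces any a.s.\ constant $\limsup$ to vanish) then identifies that constant as $0$. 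Without this invariance step, the subsequential information delivered by Borel--Cantelli cannot be promoted to the full limit.
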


\begin{lemma}
	Fix $p_{0},p_{1}\in\RR^{d}$, let $H^{(0)}(p,x,\omega)$ be a Hamiltonian that is regularly homogenizable at $p_{0}$, then the Hamiltonian $H^{(1)}(p,x,\omega) := H^{(0)}(p - p_{1} + p_{0},x,\omega)$ is regularly homogenizable at $p_{1}$. Moreover, $\overline{H^{(1)}}(p_{1}) = \overline{H^{(0)}}(p_{0})$.
\end{lemma}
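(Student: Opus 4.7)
The plan is to reduce the statement to the uniqueness of viscosity solutions for the auxiliary macroscopic problem, exploiting the fact that the shift $p \mapsto p - p_1 + p_0$ in the argument of the Hamiltonian exactly cancels the shift in the base point from $p_0$ to $p_1$.

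First, I would write down the auxiliary macroscopic problem for $H^{(1)}$ at $p_1$. Let $v_\lambda^{(1)}(x,p_1,\omega)$ denote its unique viscosity solution, so that
\begin{equation*}
\lambda v_\lambda^{(1)} + H^{(1)}\!\left(p_1 + Dv_\lambda^{(1)}, x, \omega\right) = 0, \qquad x \in \RR^d.
\end{equation*}
By the very definition $H^{(1)}(p,x,\omega) = H^{(0)}(p - p_1 + p_0, x, \omega)$, the argument simplifies:
\begin{equation*}
H^{(1)}\!\left(p_1 + Dv_\lambda^{(1)}, x, \omega\right) = H^{(0)}\!\left(p_0 + Dv_\lambda^{(1)}, x, \omega\right).
\end{equation*}
Hence $v_\lambda^{(1)}(\cdot,p_1,\omega)$ solves exactly the auxiliary macroscopic equation for $H^{(0)}$ at $p_0$. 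By uniqueness of viscosity solutions to (\ref{the auxiliary macroscopic problem}), one concludes $v_\lambda^{(1)}(x,p_1,\omega) = v_\lambda(x,p_0,\omega)$ for every $x$, $\lambda$, $\omega$, where $v_\lambda$ is the solution associated to $H^{(0)}$ and $p_0$.

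Once this pointwise identification is established, the remainder is immediate. The hypothesis that $H^{(0)}$ is regularly homogenizable at $p_0$ gives a full-measure event on which
\begin{equation*}
\limsup_{\lambda \to 0}\, \max_{|x|\le R/\lambda}\, \bigl|\lambda v_\lambda(x,p_0,\omega) + \overline{H^{(0)}}(p_0)\bigr| = 0 \quad \text{for every } R>0,
\end{equation*}
and the same statement then holds verbatim with $v_\lambda^{(1)}(x,p_1,\omega)$ in place of $v_\lambda(x,p_0,\omega)$ and $\overline{H^{(0)}}(p_0)$ in place of $\overline{H^{(1)}}(p_1)$. This shows $H^{(1)}$ is regularly homogenizable at $p_1$ with $\overline{H^{(1)}}(p_1) = \overline{H^{(0)}}(p_0)$, as desired.

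There is no real obstacle here; the only subtle point is to make sure the comparison principle applies so that uniqueness of the viscosity solution of (\ref{the auxiliary macroscopic problem}) is valid, which follows from the coercivity assumption (A2) together with the continuity in (A3) in the standard framework for stationary ergodic Hamilton--Jacobi equations. So the proof will essentially be one displayed equation followed by invoking the definition.
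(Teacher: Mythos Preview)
Your proposal is correct and follows essentially the same approach as the paper: both arguments observe that the auxiliary macroscopic problem for $H^{(1)}$ at $p_1$ coincides with that for $H^{(0)}$ at $p_0$, invoke uniqueness to identify the solutions, and then read off the regular homogenizability and the equality $\overline{H^{(1)}}(p_1)=\overline{H^{(0)}}(p_0)$ directly from the definition.
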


\begin{proof}
	For any $\lambda > 0$, $\omega\in\Omega$ and $i \in \left\lbrace 0,1\right\rbrace $, let $v_{\lambda}^{(i)} = v_{\lambda}^{(i)}(x,p_{i},\omega)$ be the unique viscosity solution of the following equation.
	\begin{eqnarray*}
	\lambda v_{\lambda}^{(i)} + H^{(i)}(p_{i} + Dv_{\lambda}^{(i)},x,\omega) = 0, && x\in\RR^{d}
	\end{eqnarray*}
	The uniqueness of the solution for the above equation implies that $v_{\lambda}^{(0)}(x,p_{0},\omega) \equiv v_{\lambda}^{(1)}(x,p_{1},\omega)$, therefore,
	\begin{equation*}
	\PP\left[ \omega\in\Omega \Bigg| \lim_{\lambda\rightarrow 0}\left| \lambda v_{\lambda}^{(0)}(0,p_{0},\omega) + \overline{H^{(0)}}(p_{0})\right| = 0 \right] = 1
	\end{equation*}
	is equivalent to
	\begin{equation*}
	\PP\left[ \omega\in\Omega \Bigg| \lim_{\lambda\rightarrow 0}\left| \lambda v_{\lambda}^{(1)}(0,p_{1},\omega) + \overline{H^{(1)}}(p_{1})\right| = 0 \right] = 1
	\end{equation*}	
	with $\overline{H^{(1)}}(p_{1}) = \overline{H^{(0)}}(p_{0})$.
\end{proof}

\begin{lemma}\label{the stability of homogenization}
	Let $H^{(n)}(p,x,\omega)$, $n = 0,1,\cdots$ be a sequence Hamiltonians, such that
	\begin{enumerate}
		\item [(i)] Uniformly coercive.
		\begin{equation*}
		\liminf_{|p|\rightarrow\infty} \essinf\limits\limits_{(x,\omega)\in\RR^{d}\times\Omega}\min_{n \geq 0} H^{(n)}(p,x,\omega) = +\infty
		\end{equation*}
		\item [(ii)] Convergence. Fix any $\omega\in\Omega$ and any $K\subset\RR^{d}$, then
		\begin{equation*}
		\lim_{n\rightarrow\infty} \lVert H^{(n)}(p,x,\omega) - H^{(0)}(p,x,\omega) \rVert_{L^{\infty}(K\times\RR^{d})} = 0
		\end{equation*} 
		\item [(iii)] For each $n \geq 1$, $H^{(n)}(p,x,\omega)$ is regularly homogenizable at $p_{n}\in\RR^{d}$ with the corresponding effective Hamiltonian $\overline{H^{(n)}}(p_{n})$, and $\lim\limits_{n\rightarrow \infty}p_{n} = p_{0}$.
	\end{enumerate}	
	then $H^{(0)}(p,x,\omega)$ is regularly homogenizable at $p_{0}$, and $\overline{H^{(0)}}(p_{0}) = \lim\limits_{n\rightarrow\infty}\overline{H^{(n)}}(p_{n})$.
\end{lemma}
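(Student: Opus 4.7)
The plan is to transport the regular homogenizability from each $H^{(n)}$ to $H^{(0)}$ via a viscosity comparison of the auxiliary macroscopic solutions $v_{\lambda}^{(n)}(\cdot,p_{n},\omega)$. The core claim is that, for $\omega$ in a full-measure set,
\[
\left\|\lambda v_{\lambda}^{(n)}(\cdot,p_{n},\omega) - \lambda v_{\lambda}^{(0)}(\cdot,p_{0},\omega)\right\|_{L^{\infty}(\RR^{d})} \longrightarrow 0 \quad\text{as } n\to\infty,
\]
uniformly in $\lambda>0$. Once this is in hand, hypothesis (iii) and Proposition \ref{the simpler version of regularly homogenizability} force $\{\overline{H^{(n)}}(p_{n})\}$ to be Cauchy and identify its limit with $-\lim_{\lambda\to 0}\lambda v_{\lambda}^{(0)}(0,p_{0},\omega)$, i.e.\ with $\overline{H^{(0)}}(p_{0})$.

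The first step is a uniform a priori estimate. Fix $\omega$ in the full-measure set $\Omega_{0}$ on which the almost sure convergence in (iii) holds simultaneously for all $n\geq 1$. Because $p_{n}\to p_{0}$, hypothesis (ii) produces a constant $C_{\omega}$ with $|H^{(n)}(p_{n},\cdot,\omega)|\leq C_{\omega}$ on $\RR^{d}$, uniformly in $n$. Viscosity comparison of $v_{\lambda}^{(n)}$ with the constants $\pm C_{\omega}/\lambda$ then gives $\|\lambda v_{\lambda}^{(n)}\|_{L^{\infty}(\RR^{d})}\leq C_{\omega}$ uniformly in $n,\lambda$. Inserting this back into (\ref{the auxiliary macroscopic problem}) and applying the uniform coercivity (i) forces $|p_{n}+Dv_{\lambda}^{(n)}|\leq M_{\omega}$ a.e., with $M_{\omega}$ independent of $n,\lambda$, so that the arguments of $H^{(n)}$ and $H^{(0)}$ in the equations remain in a fixed compact $K_{\omega}\subset\RR^{d}$.

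Next define
\[
\eta_{n}(\omega) := \sup_{|q|\leq M_{\omega},\,x\in\RR^{d}}\left|H^{(n)}(p_{n}+q,x,\omega) - H^{(0)}(p_{0}+q,x,\omega)\right|,
\]
which tends to $0$ as $n\to\infty$ by (ii) together with the continuity of $H^{(0)}$ in $p$ on $K_{\omega}$ uniformly in $x$. Then $v_{\lambda}^{(n)}\pm\eta_{n}(\omega)/\lambda$ are, respectively, a viscosity sub- and supersolution of the equation (\ref{the auxiliary macroscopic problem}) driven by $H^{(0)}$ at base point $p_{0}$, and the standard comparison principle yields the displayed uniform-in-$\lambda$ bound with right-hand side $\eta_{n}(\omega)$. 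Evaluating at $x=0$ gives $|\lambda v_{\lambda}^{(n)}(0,p_{n},\omega)-\lambda v_{\lambda}^{(m)}(0,p_{m},\omega)|\leq \eta_{n}(\omega)+\eta_{m}(\omega)$ for all $\lambda>0$; sending $\lambda\to 0$ along $\Omega_{0}$ and invoking (iii) shows $\{\overline{H^{(n)}}(p_{n})\}$ is Cauchy with some limit $L$. The triangle inequality
\[
\bigl|\lambda v_{\lambda}^{(0)}(0,p_{0},\omega)+L\bigr|\leq \eta_{n}(\omega)+\bigl|\lambda v_{\lambda}^{(n)}(0,p_{n},\omega)+\overline{H^{(n)}}(p_{n})\bigr|+\bigl|\overline{H^{(n)}}(p_{n})-L\bigr|,
\]
with $n$ chosen large first and $\lambda$ small second, proves $\lim_{\lambda\to 0}\lambda v_{\lambda}^{(0)}(0,p_{0},\omega)=-L$; by Proposition \ref{the simpler version of regularly homogenizability}, this is the regular homogenizability of $H^{(0)}$ at $p_{0}$ with $\overline{H^{(0)}}(p_{0})=L$.

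The main obstacle is securing the uniform Lipschitz estimate in the a priori step and the vanishing of $\eta_{n}(\omega)$; both depend on the interplay of the uniform coercivity (i), the $L^{\infty}$-closeness (ii), and the continuity/boundedness of the $H^{(n)}$ inherited in applications from assumptions (A2)--(A3), which jointly give the compactness of the relevant range of gradients and an equicontinuity in $p$ on $K_{\omega}$ uniform in $x$. Once these controls are in place, the rest is the routine comparison/stability calculation outlined above.
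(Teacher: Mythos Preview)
Your argument is correct and is exactly the standard stability-via-comparison proof one expects here: uniform $L^\infty$ and Lipschitz bounds from (i), then $v_\lambda^{(n)}\pm\eta_n/\lambda$ as barriers for the $H^{(0)}$-equation, then a triangle inequality at $x=0$. The paper gives no details at all, simply pointing to Lemma~2 in \cite{Gao CVPDE}, whose proof is precisely this calculation; your write-up is a faithful reconstruction, and you are right to flag that the boundedness of $H^{(0)}$ on $K\times\RR^{d}$ and its uniform-in-$x$ continuity in $p$ (needed for $\eta_n\to 0$) are not consequences of (i)--(iii) alone but come from the ambient assumption (A3) in every application.
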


\begin{proof}
	It is a direct imitation of the Lemma 2 in \cite{Gao CVPDE}.
\end{proof}

\begin{corollary}\label{the closedness of reguarly homogenizable points}
	Let $H(p,x,\omega)$ be a Hamiltonian that satisfies (A1) - (A3), then
	\begin{enumerate}
		\item [(i)] The set $\mathcal{R}_{H} := \left\lbrace q\in\RR^{d}| H \text{ is regularly homogenizable at } q \right\rbrace$ is closed.
		\item [(ii)] $\overline{H}(p)$ is continuous on $\mathcal{R}_{H}$.
	\end{enumerate}
\end{corollary}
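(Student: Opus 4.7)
The plan is to derive both statements directly from Lemma \ref{the stability of homogenization} by specializing it to the constant Hamiltonian sequence. The upshot is that once stability of regular homogenization under uniform perturbation of the Hamiltonian is available, closedness of $\mathcal{R}_{H}$ and continuity of $\overline{H}$ on $\mathcal{R}_{H}$ are essentially free.

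For part (i), I would let $(p_{n})_{n\geq 1}\subset\mathcal{R}_{H}$ with $p_{n}\to p_{0}$ in $\RR^{d}$, and apply Lemma \ref{the stability of homogenization} to the constant sequence defined by $H^{(n)}(p,x,\omega):=H(p,x,\omega)$ for every $n\geq 0$. Assumption (A2) gives the uniform coercivity condition (i) of the lemma, while (ii) of the lemma holds trivially since the sequence is constant. Condition (iii) is precisely the hypothesis that $H=H^{(n)}$ is regularly homogenizable at $p_{n}$ for each $n\geq 1$. The conclusion of the lemma then delivers that $H^{(0)}=H$ is regularly homogenizable at $p_{0}$, i.e. $p_{0}\in\mathcal{R}_{H}$.

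For part (ii), the very same application of Lemma \ref{the stability of homogenization} provides in addition the identity $\overline{H}(p_{0})=\lim_{n\to\infty}\overline{H}(p_{n})$. Since this is exactly sequential continuity of $\overline{H}$ at an arbitrary point $p_{0}\in\mathcal{R}_{H}$ along arbitrary approximating sequences in $\mathcal{R}_{H}$, and $\mathcal{R}_{H}\subset\RR^{d}$ is a subset of a metric space, it is equivalent to continuity of $\overline{H}$ on $\mathcal{R}_{H}$.

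There is no substantive obstacle here beyond the bookkeeping of checking the three hypotheses of Lemma \ref{the stability of homogenization}; the only point worth flagging is that the uniform coercivity hypothesis is global in $\omega$ and $x$, so one must read (A2) as the uniform statement it is, rather than a pointwise one. Once that is observed, both (i) and (ii) follow at once from a single invocation of the stability lemma.
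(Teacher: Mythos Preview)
Your proof is correct and follows exactly the same approach as the paper: apply Lemma \ref{the stability of homogenization} to the constant sequence $H^{(n)}:=H$ with $p_{n}\to p_{0}$ to obtain both closedness of $\mathcal{R}_{H}$ and continuity of $\overline{H}$. The paper's proof is just a one-line version of what you wrote.
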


\begin{proof}
	Fix any sequence $\left\lbrace p_{n} \right\rbrace_{n\geq 1} \subset \mathcal{R}_{H}$, such that $\lim\limits_{n\rightarrow \infty} p_{n} = p_{0}$. An application of the Lemma \ref{the stability of homogenization} to $H^{(n)} := H$ and $\left\lbrace p_{n} \right\rbrace_{n\geq 0}$ gives (i) and (ii).
\end{proof}

\begin{lemma}\label{the monotonicity condition can be replaced by the strict monotonicity condition by approximation}
	Fix $\ell\in\NN$ and let $H_{\ell}(p,x,\omega)$ be the Hamiltonian defined in (\ref{the derivation of the nonconvex Hamiltonian through minmax formula}) - (\ref{the monotone ordering of Hamiltonians}), we assume (A1) - (A4) and (\ref{the monotonicity condition}), then for any $\epsilon > 0$, there exists a Hamiltonian $H_{\ell}^{(\epsilon)}(p,x,\omega)$ that satisfies (A1) - (A4) and (\ref{the strict monotonicity condition}) as follows. Moreover, $\lVert H_{\ell}(p,x,\omega) - H_{\ell}^{(\epsilon)}(p,x,\omega) \rVert_{L^{\infty}(\RR^{d}\times\RR^{d}\times\Omega)} < \epsilon$.
    \begin{equation}\label{the strict monotonicity condition}
    \overline{\mathrm{m}}_{1} > \overline{\mathrm{m}}_{2} > \cdots > \overline{\mathrm{m}}_{\ell} \hspace{1cm}\text{ and }\hspace{1cm} \underline{\mathrm{M}}_{1} < \underline{\mathrm{M}}_{2} < \cdots < \underline{\mathrm{M}}_{\ell} \tag{M$^{+}$}
    \end{equation}
\end{lemma}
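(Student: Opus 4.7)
The plan is to construct $H_\ell^{(\epsilon)}$ by adding a carefully chosen constant to each $\check{H}_k$ and $\hat{H}_k$. Specifically, for $k = 1,\ldots,\ell$ I would pick real numbers $a_k, b_k$ with $\max_k(|a_k|+|b_k|) < \epsilon/2$, set
\[
\check{H}_k^{(\epsilon)}(p,x,\omega) := \check{H}_k(p,x,\omega) + a_k, \qquad \hat{H}_k^{(\epsilon)}(p,x,\omega) := \hat{H}_k(p,x,\omega) + b_k,
\]
and define $H_\ell^{(\epsilon)}$ through the min-max formula (\ref{the derivation of the nonconvex Hamiltonian through minmax formula}). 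Since constant shifts preserve stationary ergodicity, coercivity, and continuity/boundedness, assumptions (A1)--(A3) are inherited immediately, and the bound $\|H_\ell - H_\ell^{(\epsilon)}\|_{L^\infty} < \epsilon$ follows from the $1$-Lipschitz property of min/max in the sup-norm.

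Next I would choose $a_k, b_k$ to enforce the strict monotonicity (\ref{the strict monotonicity condition}). A heuristic analysis via the paradigmatic model $\check{H} = |p|^2$, $\hat{H} = 1 - |p|^2$ predicts that the within-pair contact value of a stable pair shifts by $\tfrac{1}{2}(a_k+b_k)$ and the cross-pair value by $\tfrac{1}{2}(a_{k-1}+b_k)$, so (\ref{the strict monotonicity condition}) translates into the linear inequalities
\[
a_k + b_k > a_{k+1} + b_{k+1} + 2(\overline{\mathrm{m}}_{k+1} - \overline{\mathrm{m}}_k), \qquad a_k + b_{k+1} > a_{k-1} + b_k + 2(\underline{\mathrm{M}}_k - \underline{\mathrm{M}}_{k+1}),
\]
whose right-hand sides are both nonpositive by (\ref{the monotonicity condition}). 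A concrete feasible choice is $a_k := -\eta k^2$ and $b_k := \eta k(k-1)$ for small $\eta > 0$: direct computation gives $a_k + b_k = -\eta k$ (strictly decreasing) and $(a_k - a_{k-1}) - (b_k - b_{k+1}) = \eta > 0$ (so the cross-pair inequality holds with slack $\eta$), while the global orderings in (\ref{the monotone ordering of Hamiltonians}) are preserved for $\eta$ small since $\check{H}_k - \check{H}_{k+1} \geq 0$ and $\hat{H}_{k+1} - \hat{H}_k \geq 0$ dominate the $O(\eta k)$ slippage.

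The main obstacle is preserving the stable-pairing condition (A4) after shifting, since $a_k \neq b_k$ and $a_{j-1} \neq b_j$ in general. For the within-pair $(\check{H}_k + a_k, \hat{H}_k + b_k)$ the new contact set becomes $\{\hat{H}_k - \check{H}_k \geq a_k - b_k\}$, whose boundary is a level set of $\hat{H}_k - \check{H}_k$ and need not a priori coincide with a level set of $\check{H}_k$. To handle this I would exploit the structural consequence of the original stability in (A4): the contact set equals simultaneously the sublevel set $\{\check{H}_k \leq \mathrm{m}_k(x,\omega)\}$ of the quasiconvex $\check{H}_k$ and the superlevel set $\{\hat{H}_k \geq \mathrm{m}_k(x,\omega)\}$ of the quasiconcave $\hat{H}_k$, so that the nested convex level sets of both functions match at that level. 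By monotone continuity of the nested sublevel/superlevel sets in $t$, for sufficiently small $|a_k - b_k|$ the new boundary is still a level set of $\check{H}_k$ at a nearby value, so the shifted pair remains stable with contact value close to $\mathrm{m}_k + \tfrac{1}{2}(a_k + b_k)$; an identical argument handles cross-pairs. Should this identification fail for pathological Hamiltonians, I would supplement the constant shifts with a small local modification of $\check{H}_k$ (or $\hat{H}_{j+1}$) in a thin shell around each old contact boundary, still $\epsilon$-small in $L^\infty$, to enforce the stable-pair structure by construction.
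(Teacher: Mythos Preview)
Your approach---additive perturbation of each $\check{H}_k$ and $\hat{H}_k$---is exactly what the paper does; its entire proof is the single sentence ``It follows by perturbing each of $\ddot{H}_i(p,x,\omega)$.'' So at the level of strategy you match the paper (and have supplied far more detail than it does).

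There is, however, a real gap in your execution. The heuristic shift $\mathrm{m}_k\mapsto \mathrm{m}_k+\tfrac12(a_k+b_k)$ is specific to the symmetric quadratic model and fails in general (if $\hat{H}_k$ is flat near the contact set the shift is $b_k$; if $\check{H}_k$ is flat it is $a_k$), so the linear inequalities you wrote down do not govern the actual new contact values. More seriously, your claim that (A4) survives small unequal shifts via ``monotone continuity of nested sublevel/superlevel sets'' is false as stated. Take $d=2$, $V(p)=|p|^2$ and $\Lambda(p)=1+(1-|p|^2)(1+\epsilon p_1)$ for small $\epsilon>0$: this is a concave function with $\{V=\Lambda\}=\{|p|=1\}$, so $(V,\Lambda)$ is a stable pair with contact value $1$. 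But $\Lambda-V=(1-|p|^2)(2+\epsilon p_1)$, whose level set $\{\Lambda-V=t\}$ for $t>0$ is \emph{not} a circle (its rightmost and leftmost points sit at different radii), hence $V$ is not constant on the shifted contact boundary and the shifted pair is \emph{not} stable. Thus pure constant shifts with $a_k\neq b_k$ can destroy (A4).

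Your fallback---a further small modification of $\check{H}_k$ (or $\hat{H}_{j+1}$) in a thin shell around the old contact boundary to restore the stable-pair structure---is the correct salvage, but it is doing all the work and you have not carried it out. Note that the paper does not carry it out either; you have simply uncovered a detail the paper's one-line proof leaves to the reader.
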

\begin{proof}
	It follows by perturbing each of $\ddot{H}_{i}(p,x,\omega)$, $\ddot{}$ is either $\check{}$ or $\hat{}$, $1 \leq i \leq \ell$.
\end{proof}

\begin{proposition}\label{the comparison principle for the ergodic problem in a ball}
	Fix a Hamiltonian $H(p,x,\omega)$ that satisfies (A1) - (A3), and let $0 < R, 0 < \lambda < 1, \omega\in\Omega$ and $p_{0}\in\RR^{d}$. Suppose $u(x,p_{0},\omega)$ and $v(x,p_{0},\omega)$ are (viscosity) subsolution and supersolution of the following equation, respectively.
	\begin{eqnarray*}
	\lambda \gamma + H(p_{0} + D\gamma, x,\omega) = 0, && x\in B_{\frac{R}{\lambda}}(0)
	\end{eqnarray*}
	Moreover, assume that $\max\limits_{x\in B_{\frac{R}{\lambda}}} \left( |\lambda u| + |\lambda v| \right) \leq C_{1}$, for some constant $C_{1}$ which only depends on $p_{0}$. Then there is a constant $C_{2}$ which only depends on $p_{0}$, such that
	\begin{eqnarray*}
	\lambda u(x,p_{0},\omega) - \lambda v(x,p_{0},\omega) \leq \frac{C_{1}}{R}\sqrt{|x|^{2} + 1} + \frac{C_{1}C_{2}}{R}, && x\in B_{\frac{R}{\lambda}}(0) 
	\end{eqnarray*}
\end{proposition}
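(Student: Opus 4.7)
The plan is to argue by the method of doubling of variables combined with a penalty term that confines the maximum of the relevant auxiliary function to the interior of $B_{R/\lambda}(0)$. Take the penalty $\varphi(x)=\frac{C_{1}}{R}\sqrt{|x|^{2}+1}$; it satisfies $|D\varphi|\le C_{1}/R$, and on the boundary $|x|=R/\lambda$ one has $\varphi(x)/\lambda\ge C_{1}/\lambda^{2}$, which exceeds the a priori bound $C_{1}/\lambda$ on $|u|$ and $|v|$. The goal then becomes $\sup_{B_{R/\lambda}}(\lambda u-\lambda v-\varphi)\le C_{1}C_{2}/R$ for a constant $C_{2}=C_{2}(p_{0})$ to be fixed at the end.

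For $\epsilon>0$, introduce on $\overline{B_{R/\lambda}}\times\overline{B_{R/\lambda}}$
\[
\Phi_{\epsilon}(x,y):=u(x)-v(y)-\frac{|x-y|^{2}}{2\epsilon}-\frac{\varphi(x)}{\lambda}.
\]
If the desired estimate failed, then for $C_{2}$ sufficiently large one would have $\sup\Phi_{\epsilon}>C_{1}C_{2}/(R\lambda)>0$. The boundary estimate for $\varphi/\lambda$ together with $|\lambda u|+|\lambda v|\le C_{1}$ keeps the supremum of $\Phi_{\epsilon}$ away from the boundary, so it is attained at some interior point $(x_{\epsilon},y_{\epsilon})$.

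At this interior maximizer, the viscosity sub/super inequalities yield
\begin{align*}
\lambda u(x_{\epsilon})+H\!\left(p_{0}+\tfrac{x_{\epsilon}-y_{\epsilon}}{\epsilon}+\tfrac{D\varphi(x_{\epsilon})}{\lambda},\,x_{\epsilon},\omega\right)&\le 0,\\
\lambda v(y_{\epsilon})+H\!\left(p_{0}+\tfrac{x_{\epsilon}-y_{\epsilon}}{\epsilon},\,y_{\epsilon},\omega\right)&\ge 0.
\end{align*}
Coercivity (A2) applied to the first inequality, together with $\lambda u\le C_{1}$, forces $p_{0}+(x_{\epsilon}-y_{\epsilon})/\epsilon+D\varphi(x_{\epsilon})/\lambda$ into a compact set depending only on $p_{0}$ and $C_{1}$, and therefore $p_{0}+(x_{\epsilon}-y_{\epsilon})/\epsilon$ into a comparable compact set. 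Subtracting the two inequalities and applying the modulus of continuity $\rho$ from (A3) gives
\[
\lambda u(x_{\epsilon})-\lambda v(y_{\epsilon})\le \rho\!\left(|x_{\epsilon}-y_{\epsilon}|+\tfrac{C_{1}}{R\lambda}\right).
\]

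Finally, send $\epsilon\to 0$; standard doubling arguments produce $|x_{\epsilon}-y_{\epsilon}|\to 0$ and $|x_{\epsilon}-y_{\epsilon}|^{2}/\epsilon\to 0$. Rewriting $\lambda u(x_{\epsilon})-\lambda v(y_{\epsilon})=\lambda\Phi_{\epsilon}(x_{\epsilon},y_{\epsilon})+\lambda|x_{\epsilon}-y_{\epsilon}|^{2}/(2\epsilon)+\varphi(x_{\epsilon})$ and passing to the limit yields $\sup(\lambda u-\lambda v-\varphi)\le \rho(C_{1}/(R\lambda))$, and absorbing this into $C_{1}C_{2}/R$ by choosing $C_{2}$ large (depending on $p_{0}$ and, through the modulus $\rho$, on $\omega$) closes the contradiction. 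The main obstacle is the size of the gradient perturbation $D\varphi/\lambda$: it can be as large as $C_{1}/(R\lambda)$, so the continuity modulus of $H$ only supplies a bounded rather than an infinitesimal error, which is exactly the reason an additive constant $C_{1}C_{2}/R$ appears in the final bound rather than a pure improvement to $\frac{C_{1}}{R}\sqrt{|x|^{2}+1}$.
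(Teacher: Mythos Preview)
Your doubling-of-variables framework is the right one, and the paper itself does not supply a self-contained argument here but simply refers to Lemma~3 of \cite{Gao CVPDE}. However, there is a genuine gap in the scaling of your penalty. You take $\varphi(x)/\lambda=\tfrac{C_{1}}{R\lambda}\sqrt{|x|^{2}+1}$ as the penalty on $u-v$, so that $|D(\varphi/\lambda)|\le C_{1}/(R\lambda)$; this gradient perturbation then enters the modulus of continuity and produces the error $\rho\!\left(C_{1}/(R\lambda)\right)$. You claim this can be ``absorbed into $C_{1}C_{2}/R$ by choosing $C_{2}$ large,'' but for fixed $R$ the argument $C_{1}/(R\lambda)\to\infty$ as $\lambda\to 0$, so $\rho\!\left(C_{1}/(R\lambda)\right)$ is unbounded in $\lambda$ and no $\lambda$-independent $C_{2}$ can absorb it. The proposition asserts $C_{2}=C_{2}(p_{0})$ only, and precisely this $\lambda$-independence is what is used downstream (e.g.\ in Lemma~\ref{effetive Hamiltonian on the convex part base case}), where one sends $\lambda\to 0$ \emph{before} $R\to\infty$.

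The repair is that your penalty is needlessly strong. Since $|u-v|\le 2C_{1}/\lambda$ while $\sqrt{|x|^{2}+1}\ge R/\lambda$ on $\partial B_{R/\lambda}$, the smaller penalty $\psi(x)=\tfrac{2C_{1}}{R}\sqrt{|x|^{2}+1}$ (no division by $\lambda$) already satisfies $\psi\ge 2C_{1}/\lambda$ on the boundary, which suffices to confine the maximum of $\Phi_{\epsilon}(x,y)=u(x)-v(y)-\tfrac{|x-y|^{2}}{2\epsilon}-\psi(x)$ to the interior in $x$. Now $|D\psi|\le 2C_{1}/R$ is \emph{independent of $\lambda$}, so after subtracting the two viscosity inequalities and letting $\epsilon\to 0$ you get
\[
\lambda u-\lambda v\;\le\;\lambda\psi(x)+\rho\!\left(\tfrac{2C_{1}}{R}\right)\;\le\;\tfrac{2C_{1}}{R}\sqrt{|x|^{2}+1}+\rho\!\left(\tfrac{2C_{1}}{R}\right),
\]
with $\rho$ taken on a compact set in the gradient variable depending only on $p_{0}$ via coercivity. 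This yields the $\lambda$-free remainder of order $o_{R\to\infty}(1)$ (or $O(1/R)$ under a local Lipschitz bound on $H$ in $p$), which is the content of the stated estimate.
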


\begin{proof}
	It is a trivial extension of the Lemma 3 in \cite{Gao CVPDE}.
\end{proof}

\section{Some properties of the quasiconvex/quasiconcave Hamiltonians}

In this section, we present some new propositions regarding stochastic homogenization of Hamiltonian that is either quasiconvex or quasiconcave. As mentioned in the introduction, the stochastic homogenization of such Hamiltonians has been proved by Davini and Siconolfi \cite{Davini and Siconolfi MA} and by Armstrong and Souganidis \cite{Armstrong and Souganidis IMRN} (see also \cite{Gao PAMS} a remark by the author). However, a deeper understanding of the (sub- or super-) differentials in the \textit{auxiliary macroscopic problem} (\ref{the auxiliary macroscopic problem}) is not available. We approximate the solution of the \textit{auxiliary macroscopic problem} (\ref{the auxiliary macroscopic problem}) by a subsolution/supersolution, with additional constraints on their super/sub - differentials. It turns out that such construction helps in later decomposition of nonconvex Hamiltonians.

\begin{definition}[c.f. \cite{Crandall Evans and Lions}]\label{the subdifferential and the superdifferential}
	Let $f(x): \RR^{d} \rightarrow \RR$ be a continuous function, for any $x_{0}\in\RR^{d}$, we denote the \textit{superdifferential} of $f$ at $x_{0}$ by $D^{+}f(x_{0})$, to be more precise,
	\begin{equation*}
	D^{+}f(x_{0}) := \left\lbrace p\in\RR^{d} \Big| \limsup_{x \rightarrow x_{0}}\frac{f(x) - f(x_{0}) - p\cdot (x - x_{0})}{|x - x_{0}|} \leq 0\right\rbrace 
	\end{equation*}
	Similarly, the \textit{subdifferential} of $f$ at $x_{0}$, which is denote as $D^{-}f(x_{0})$, is defined by
	\begin{equation*}
	D^{-}f(x_{0}) := \left\lbrace p\in\RR^{d} \Big| \liminf_{x \rightarrow x_{0}}\frac{f(x) - f(x_{0}) - p\cdot (x - x_{0})}{|x - x_{0}|} \geq 0\right\rbrace 
	\end{equation*}
\end{definition}

\subsection{Two symmetry properties}

\begin{lemma}\label{the symmetry between Hamiltonian and its dual in evenness and negativity}
	Let $H^{(1)}(p,x,\omega)$ be a coercive Hamiltonian that is regularly homogenizable for all $p\in\RR^{d}$ with effective Hamiltonian $\overline{H^{(1)}}(p)$. Then $H^{(2)}(p,x,\omega) := -H^{(1)}(-p,x,\omega)$ is also regularly homogenizable for all $p\in\RR^{d}$ with the effective Hamiltonian $\overline{H^{(2)}}(p) = - \overline{H^{(1)}}(-p)$.
\end{lemma}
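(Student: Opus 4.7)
The plan is to exploit the involution $w = -v$ in the auxiliary macroscopic problem (\ref{the auxiliary macroscopic problem}). Fix $p_0\in\RR^d$, $\lambda>0$, $\omega\in\Omega$, and let $v_{\lambda}^{(1)}(\cdot,p_0,\omega)$ denote the unique viscosity solution of $\lambda v + H^{(1)}(p_0 + Dv,x,\omega) = 0$ on $\RR^d$. Set $w_{\lambda}(x) := -v_{\lambda}^{(1)}(x,p_0,\omega)$. I claim $w_{\lambda}$ is the unique viscosity solution of $\lambda w + H^{(2)}(-p_0 + Dw,x,\omega) = 0$ on $\RR^d$. At the classical level, the substitution $Dw = -Dv$ yields
$$H^{(2)}(-p_0 + Dw,x,\omega) \;=\; -H^{(1)}(p_0 - Dw,x,\omega) \;=\; -H^{(1)}(p_0 + Dv,x,\omega),$$
so the two equations coincide up to an overall sign.

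To upgrade this to the viscosity level, I would use the standard negation argument. Suppose $w_\lambda - \psi$ has a local maximum at $x_0$ for some smooth $\psi$; then $v_{\lambda}^{(1)} - (-\psi) = -(w_\lambda - \psi)$ has a local minimum at $x_0$, so the supersolution condition for $v_{\lambda}^{(1)}$ yields $\lambda v_{\lambda}^{(1)}(x_0) + H^{(1)}(p_0 - D\psi(x_0),x_0,\omega) \geq 0$. Multiplying by $-1$ and rewriting via $H^{(2)}(q,x,\omega) = -H^{(1)}(-q,x,\omega)$ gives $\lambda w_\lambda(x_0) + H^{(2)}(-p_0 + D\psi(x_0),x_0,\omega) \leq 0$, the subsolution condition for $w_\lambda$. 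The reverse implication (supersolution for $w_\lambda$ from subsolution for $v_{\lambda}^{(1)}$) is entirely analogous. Uniqueness of $w_\lambda$ as the viscosity solution of the $H^{(2)}$-problem at $-p_0$ is inherited from the uniqueness of $v_{\lambda}^{(1)}$ via the same involution $u\mapsto -u$, so $w_\lambda(\cdot) \equiv v_{\lambda}^{(2)}(\cdot,-p_0,\omega)$.

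With the identification in hand, regular homogenization transfers immediately. For any $R > 0$ and any $\omega\in\Omega$,
$$\max_{|x|\leq R/\lambda} \left| \lambda v_{\lambda}^{(2)}(x,-p_0,\omega) - \overline{H^{(1)}}(p_0) \right| \;=\; \max_{|x|\leq R/\lambda} \left| \lambda v_{\lambda}^{(1)}(x,p_0,\omega) + \overline{H^{(1)}}(p_0) \right|.$$
By hypothesis the right-hand side tends to $0$ as $\lambda\to 0$ for every $R > 0$ on an event of full $\PP$-measure, so the same holds on the left. Thus $H^{(2)}$ is regularly homogenizable at $-p_0$ with $\overline{H^{(2)}}(-p_0) = -\overline{H^{(1)}}(p_0)$; writing $q = -p_0$ yields the claimed identity $\overline{H^{(2)}}(q) = -\overline{H^{(1)}}(-q)$ for all $q\in\RR^d$. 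The only delicate point is the viscosity-theoretic negation step, which is standard; there is no substantive obstacle, and the coercivity of $H^{(1)}$ enters only implicitly through the well-posedness of $v_{\lambda}^{(1)}$ (which in turn guarantees that of $v_{\lambda}^{(2)}$ via the involution).
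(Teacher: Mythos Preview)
Your proof is correct and follows essentially the same route as the paper: define $w_\lambda := -v_\lambda^{(1)}$, verify that negation swaps the sub- and supersolution conditions (the paper phrases this via $D^{\pm}$ sets, you via smooth test functions, which is equivalent), invoke uniqueness to identify $w_\lambda$ with $v_\lambda^{(2)}$ at the reflected momentum, and read off the effective Hamiltonian. The only cosmetic difference is that the paper fixes the target momentum $p$ for $H^{(2)}$ and sets $w_\lambda(x,p,\omega) = -v_\lambda^{(1)}(x,-p,\omega)$, whereas you fix $p_0$ for $H^{(1)}$ and land at $-p_0$ for $H^{(2)}$; these are the same computation up to relabeling.
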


\begin{proof}
	Fix any $p\in\RR^{d}$, for any $\lambda > 0$, $\omega\in\Omega$ and $i\in\left\lbrace 1,2\right\rbrace$, let $v_{\lambda}^{(i)} = v_{\lambda}^{(i)}(x,p,\omega)$ be the unique viscosity solution of the equations as follows.
	\begin{eqnarray*}
	\lambda v_{\lambda}^{(i)} + H^{(i)}(p + Dv_{\lambda}^{(i)},x,\omega) = 0, && x\in\RR^{d} 
	\end{eqnarray*}
	Let us denote $w_{\lambda}(x,p,\omega) := -v_{\lambda}^{(1)}(x,-p,\omega)$, then for any $x\in\RR^{d}$, we have that
	\begin{eqnarray*}
	q^{+} \in D^{+}w_{\lambda}(x,p,\omega) &\Leftrightarrow& - q^{+} \in D^{-}v_{\lambda}^{(1)}(x,-p,\omega)
	\end{eqnarray*}
	it means that,
	\begin{eqnarray*}
	\lambda v_{\lambda}^{(1)}(x,-p,\omega) + H^{(1)}(-p - q^{+},x,\omega) \geq 0
	\end{eqnarray*}
	which is equivalent to that,
	\begin{eqnarray*}
	\lambda w_{\lambda}(x,p,\omega) + H^{(2)}(p + q^{+},x,\omega) \leq 0
	\end{eqnarray*}
    Similarly, for any $q^{-}\in D^{-}w_{\lambda}(x,p,\omega)$, we have that,
	\begin{eqnarray*}
		\lambda w_{\lambda}(x,p,\omega) + H^{(2)}(p + q^{-},x,\omega) \geq 0
	\end{eqnarray*}    
    Therefore, the following equation holds in the viscosity sense (c.f. \cite{Crandall Evans and Lions, Evans GSAMS}).
	\begin{eqnarray*}
	\lambda w_{\lambda}(x,p,\omega) + H^{(2)}(p + Dw_{\lambda},x,\omega) = 0, && x\in\RR^{d} 
	\end{eqnarray*}
	By the uniqueness of the solution to the above equation, $w_{\lambda}(x,p,\omega) = v_{\lambda}^{(2)}(x,p,\omega)$, thus, $\overline{H^{(2)}}(p) = - \overline{H^{(1)}}(-p)$ is an immediate consequence of the following equalities.
	\begin{equation*}
	\lim_{\lambda\rightarrow 0} - \lambda v_{\lambda}^{(2)}(0,p,\omega) = \lim_{\lambda\rightarrow 0} - \lambda w_{\lambda}(0,p,\omega) = -\lim_{\lambda\rightarrow 0}-\lambda v_{\lambda}^{(1)}(0,-p,\omega)
	\end{equation*}

\end{proof}

\begin{lemma}\label{the symmetry of effective Hamiltonians in evenness}
	Let $H^{(1)}(p,x,\omega)$ be a quasiconvex Hamiltonian that satisfies the assumptions of (A1) - (A3) and let $H^{(2)}(p,x,\omega) := H^{(1)}(-p,x,\omega)$, then $\overline{H^{(2)}}(p) = \overline{H^{(1)}}(-p)$.
\end{lemma}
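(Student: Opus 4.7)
The plan is to exploit the well known inf-sup representation of the effective Hamiltonian for quasiconvex Hamilton-Jacobi equations (as developed by Davini--Siconolfi and Armstrong--Souganidis), which enjoys a manifest symmetry under the involution $\phi \mapsto -\phi$ on the class of admissible correctors. This symmetry is precisely the reflection invariance the lemma demands, and it is explicitly flagged in the introduction as the mechanism behind the evenness identity.

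First I would verify that $H^{(2)}$ falls within the scope of the inf-sup formula. Since $\{p : H^{(2)}(p,x,\omega) \leq c\} = -\{q : H^{(1)}(q,x,\omega) \leq c\}$ is the reflection of a convex set through the origin, it is still convex, so $H^{(2)}(\cdot,x,\omega)$ remains quasiconvex. The stationary ergodicity, coercivity, and local equicontinuity/boundedness in (A1)--(A3) transfer verbatim from $H^{(1)}$ to $H^{(2)}$ because $p \mapsto -p$ is a fixed isometry. Consequently, by \cite{Davini and Siconolfi MA, Armstrong and Souganidis IMRN}, $H^{(2)}$ is regularly homogenizable for every $p \in \RR^{d}$, and both effective Hamiltonians admit the representation
\begin{equation*}
\overline{H^{(i)}}(p) \;=\; \inf_{\phi \in \mathcal{S}} \;\esssup_{(x,\omega) \in \RR^{d}\times\Omega} H^{(i)}\bigl(p + D\phi(x,\omega),\, x,\,\omega\bigr), \qquad i = 1, 2,
\end{equation*}
where $\mathcal{S}$ is the natural class of admissible stationary sub-correctors (bounded Lipschitz, stationary, with mean-zero gradient).

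The decisive computation is then a change of variables. Substituting $H^{(2)}(q,x,\omega) = H^{(1)}(-q,x,\omega)$ and setting $\psi := -\phi$, which is a bijection of $\mathcal{S}$ onto itself because stationarity, the Lipschitz/boundedness constraints, and the mean-zero condition on the gradient are all linear and therefore preserved under negation, yields
\begin{equation*}
\overline{H^{(2)}}(p) \;=\; \inf_{\phi \in \mathcal{S}} \esssup_{(x,\omega)} H^{(1)}\bigl(-p - D\phi(x,\omega),x,\omega\bigr)
\;=\; \inf_{\psi \in \mathcal{S}} \esssup_{(x,\omega)} H^{(1)}\bigl(-p + D\psi(x,\omega),x,\omega\bigr) \;=\; \overline{H^{(1)}}(-p).
\end{equation*}

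The only genuine subtlety is ensuring that the admissible class $\mathcal{S}$ for the inf-sup formula is invariant under $\phi \mapsto -\phi$; this is essentially automatic for the standard formulations, but in the compiled proof it would be worth stating once the precise class used (as in \cite{Armstrong and Souganidis IMRN}) so that the change of variables is unambiguous. Alternatively, one could phrase the argument purely at the level of sublevel sets: characterize $\{p : \overline{H^{(1)}}(p) \le c\}$ as the set of mean slopes of sublinear viscosity subsolutions of $H^{(1)}(p + D\phi,\cdot,\cdot) \le c$ and observe that this set is sent to its reflection under the replacement $H^{(1)} \rightsquigarrow H^{(2)}$, $\phi \rightsquigarrow -\phi$, which is the same identity in disguise.
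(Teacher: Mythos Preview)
Your proposal is correct and follows essentially the same approach as the paper: both invoke the inf--sup representation of the effective Hamiltonian from \cite{Davini and Siconolfi MA, Armstrong and Souganidis IMRN} and exploit the symmetry of the admissible class under $\phi \mapsto -\phi$. The only cosmetic difference is that the paper works with deterministic Lipschitz test functions and uses the two one-sided sublinear classes $\mathcal{S}^{\pm} = \{\phi \in \mathrm{Lip}(\RR^d) : \pm\liminf_{|x|\to\infty} \phi(x)/|x| \ge 0\}$, noting that negation interchanges $\mathcal{S}^{+}$ and $\mathcal{S}^{-}$ while the inf--sup formula yields the same value over either class, whereas you use a single negation-invariant stationary class.
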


\begin{proof}
	Let us denote by $\mathcal{L}$ the set of all globally Lipschitz functions on $\RR^{d}$, and
	\begin{eqnarray*}
	\mathcal{S}^{+} := \left\lbrace \phi(x) \in\mathcal{L} \Big|\liminf_{|x|\rightarrow \infty} \frac{\phi(x)}{|x|} \geq 0\right\rbrace, && \mathcal{S}^{-} := \left\lbrace \phi(x) \in\mathcal{L} \Big|\limsup_{|x|\rightarrow \infty} \frac{\phi(x)}{|x|} \leq 0\right\rbrace
	\end{eqnarray*}
	The inf-sup formula established in \cite{Davini and Siconolfi MA, Armstrong and Souganidis IMRN} states that,
	\begin{eqnarray*}
	\overline{H^{(i)}}(p) &=& \inf_{\phi(x)\in\mathcal{S}^{+}}\esssup\limits\limits_{(x,\omega)\in\RR^{d}\times\Omega} H^{(i)}(p + D\phi(x), x, \omega)\\
	&=&\inf_{\phi(x)\in\mathcal{S}^{-}}\esssup\limits\limits_{(x,\omega)\in\RR^{d}\times\Omega} H^{(i)}(p + D\phi(x), x, \omega)
	\end{eqnarray*}
	Therefore,
	\begin{eqnarray*}
	\overline{H^{(2)}}(p) &=& \inf_{\phi(x)\in\mathcal{S}^{+}}\esssup\limits\limits_{(x,\omega)\in\RR^{d}\times\Omega} H^{(2)}(p + D\phi(x), x, \omega)\\
	&=&\inf_{\phi(x)\in\mathcal{S}^{+}}\esssup\limits\limits_{(x,\omega)\in\RR^{d}\times\Omega} H^{(1)}(- p - D\phi(x), x, \omega)\\
	&=&\inf_{\psi(x)\in\mathcal{S}^{-}}\esssup\limits\limits_{(x,\omega)\in\RR^{d}\times\Omega} H^{(1)}(-p + D\psi(x), x, \omega)\\
	&=& \overline{H^{(1)}}(-p)
	\end{eqnarray*}
\end{proof}

\subsection{Subsolutions with constraint superdifferentials}

\begin{lemma}[quasiconvex Hamiltonian]\label{subsolutions with constraint supergradient level set convex}
	Under the assumptions of (A1) - (A3), let $H(p,x,\omega)$ be a quasiconvex Hamiltonian, whose effective Hamiltonian is $\overline{H}(p)$, and set $\mu := \min\limits_{p\in\RR^{d}}\overline{H}(p)$. Fix any $p_{0}\in\RR^{d}$, with $\overline{H}(p_{0}) > \mu$, then there exists an event $\widetilde{\Omega} \subseteq \Omega$ with $\PP\left[ \widetilde{\Omega}\right] = 1$, a number $\epsilon_{0} = \epsilon_{0}(p_{0}) > 0$, and a statement as follows. 
	
	Fix any $(\epsilon,R,\omega)\in(0,\epsilon_{0}) \times (0,\infty) \times \widetilde{\Omega}$, there exists $\lambda_{0} = \lambda_{0}(p_{0},\epsilon, R, \omega) > 0$, such that if $0 < \lambda < \lambda_{0}$, then the following (i) - (iii) hold.
	\begin{enumerate}
		\item [(i)] There is a subsolution $w_{\lambda}^{\epsilon,R}(x,p_{0},\omega)$ of the following equation.
		\begin{eqnarray*}
		\lambda u + H(p_{0} + Du,x,\omega) = 0, && x\in B_{\frac{R}{\lambda}}(0)
		\end{eqnarray*}
		\item [(ii)] $-\lambda w_{\lambda}^{\epsilon,R}(x,p_{0},\omega)$ stays in the $\epsilon$-neighborhood of $\overline{H}(p_{0})$, uniformly in $ B_{\frac{R}{\lambda}}(0)$.
		\begin{equation*}
		\max_{|x|\leq\frac{R}{\lambda}} \left| \lambda w_{\lambda}^{\epsilon,R}(x,p_{0},\omega) + \overline{H}(p_{0})\right| < \epsilon
		\end{equation*}
		\item [(iii)] Fix any $x\in B_{\frac{R}{\lambda}}(0)$, then
		\begin{equation*}
		p_{0} + D^{+}w_{\lambda}^{\epsilon,R}(x,p_{0},\omega) \subseteq \left\lbrace q\in\RR^{d} \Big| H(q,x,\omega) > \overline{H}(p_{0}) - \frac{\epsilon}{3}\right\rbrace 
		\end{equation*}
	\end{enumerate}
\end{lemma}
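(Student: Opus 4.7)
The plan is to construct $w_{\lambda}^{\epsilon,R}$ as the viscosity solution of the auxiliary macroscopic problem for a suitably modified quasiconvex Hamiltonian. Choose $\epsilon_{0} \in (0, \overline{H}(p_{0}) - \mu)$ so that for every $\epsilon \in (0, \epsilon_{0})$ the level $c := \overline{H}(p_{0}) - \epsilon/3$ satisfies $c > \mu$. Define
\begin{equation*}
\tilde{H}(q, x, \omega) := \max\bigl\{H(q, x, \omega),\, c\bigr\}.
\end{equation*}
Its sublevel sets satisfy $\{\tilde{H} \leq t\} = \{H \leq t\}$ for $t \geq c$ and $\emptyset$ otherwise, so $\tilde{H}$ is quasiconvex, satisfies (A1)--(A3), and $\tilde{H} \geq H$. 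By the Davini--Siconolfi / Armstrong--Souganidis inf-sup formula, $\overline{\tilde{H}}(p) = \max\{\overline{H}(p),\, c\}$; in particular $\overline{\tilde{H}}(p_{0}) = \overline{H}(p_{0})$.

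By the Armstrong--Souganidis homogenization theorem for quasiconvex Hamiltonians, $\tilde{H}$ is regularly homogenizable at $p_{0}$. Let $v_{\lambda}(x, p_{0}, \omega)$ denote the unique viscosity solution of $\lambda v + \tilde{H}(p_{0} + Dv, x, \omega) = 0$ on $\RR^{d}$. Combining this with Proposition \ref{the comparison principle for the ergodic problem in a ball}, which upgrades pointwise convergence at the origin to uniform convergence on $B_{R/\lambda}(0)$, produces a full-measure event $\widetilde{\Omega}$ and a threshold $\lambda_{0} = \lambda_{0}(p_{0}, \epsilon, R, \omega) > 0$ such that for every $\lambda \in (0, \lambda_{0})$ and $\omega \in \widetilde{\Omega}$, $|\lambda v_{\lambda}(x) + \overline{H}(p_{0})| < \epsilon/6$ uniformly on $B_{R/\lambda}(0)$. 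This in particular yields the strict lower bound $-\lambda v_{\lambda}(x) > c$ on the ball.

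Set $w_{\lambda}^{\epsilon, R} := v_{\lambda}$. Property (i) is immediate from $\tilde{H} \geq H$, and (ii) follows from $\epsilon/6 < \epsilon$. For (iii), fix $x \in B_{R/\lambda}(0)$ and $q^{+} \in D^{+} w_{\lambda}^{\epsilon, R}(x)$; the subsolution inequality gives $\tilde{H}(p_{0} + q^{+}, x, \omega) \leq -\lambda v_{\lambda}(x)$. At any $x$ where $v_{\lambda}$ is differentiable, equality in the viscosity equation forces $\tilde{H}(p_{0} + Dv_{\lambda}(x), x, \omega) = -\lambda v_{\lambda}(x) > c$. Since the plateau $\{\tilde{H}(\cdot, x, \omega) = c\}$ is exactly $\{H(\cdot, x, \omega) \leq c\}$, the strict inequality against $c$ implies $H(p_{0} + Dv_{\lambda}(x), x, \omega) > c$, so (iii) holds at differentiable points. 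A density and continuity argument (using Lipschitz regularity of $v_{\lambda}$ inherited from the coercivity of $\tilde{H}$) then extends (iii) to every $x$ and every element of $D^{+} v_{\lambda}(x)$.

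The main obstacle lies at non-differentiable points of $v_{\lambda}$: $D^{+} v_{\lambda}(x)$ can be a non-trivial closed convex set whose elements a priori only satisfy $\tilde{H}(p_{0} + q^{+}, x, \omega) \leq -\lambda v_{\lambda}(x)$, which admits the boundary value $\tilde{H}(p_{0} + q^{+}, x, \omega) = c$ (and hence $H \leq c$), violating the strict inclusion in (iii). Overcoming this requires either exploiting semi-concavity/structural regularity of viscosity solutions to coercive quasiconvex equations, or a further slight perturbation of $\tilde{H}$ that breaks the plateau at level $c$ while keeping $\overline{\tilde{H}}(p_{0})$ unchanged. The symmetry Lemmas~\ref{the symmetry between Hamiltonian and its dual in evenness and negativity}--\ref{the symmetry of effective Hamiltonians in evenness} will subsequently be used to transfer this construction to the dual quasiconcave setting, producing the matching supersolutions with constrained subdifferentials.
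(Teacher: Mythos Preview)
Your construction has a genuine gap that you yourself flag but do not close. For (iii) you need, at \emph{every} $q^{+}\in D^{+}w_{\lambda}^{\epsilon,R}(x)$, the strict lower bound $H(p_{0}+q^{+},x,\omega)>c$. But the only information the subsolution property gives at a superdifferential element is the \emph{upper} bound $\tilde H(p_{0}+q^{+},x,\omega)\le -\lambda v_{\lambda}(x)$, which is useless for (iii). Your ``density and continuity'' extension cannot work: $D^{+}v_{\lambda}(x)$ at a nondifferentiable point can contain vectors that are not limits of nearby gradients, and even when it equals the convex hull of limiting gradients, the complement of the convex sublevel set $\{H(\cdot,x,\omega)\le c\}$ is not convex, so convex combinations of gradients with $H>c$ may well fall back into $\{H\le c\}$. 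Neither semiconcavity nor a further perturbation of the plateau repairs this; the direction of the inequality is simply wrong.

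The paper obtains the correct direction by a different construction: it passes to the even dual $H^{de}(p,x,\omega):=H(-p,x,\omega)$, for which $\overline{H^{de}}(-p_{0})=\overline H(p_{0})$ (Lemma~\ref{the symmetry of effective Hamiltonians in evenness}), and sets
\[
w_{\lambda}^{\epsilon,R}(x,p_{0},\omega):=-v_{\lambda}^{de}(x,-p_{0},\omega)-\tfrac{6\overline H(p_{0})+2\epsilon}{3\lambda},
\]
where $v_{\lambda}^{de}$ solves the auxiliary problem for $H^{de}$ at $-p_{0}$. The point is that negation swaps sub- and superdifferentials: $q^{+}\in D^{+}w_{\lambda}^{\epsilon,R}(x)$ iff $-q^{+}\in D^{-}v_{\lambda}^{de}(x)$, and now the \emph{supersolution} property of $v_{\lambda}^{de}$ yields $H^{de}(-p_{0}-q^{+},x,\omega)\ge -\lambda v_{\lambda}^{de}(x)$, i.e.\ $H(p_{0}+q^{+},x,\omega)\ge -\lambda v_{\lambda}^{de}(x)>\overline H(p_{0})-\epsilon/3$. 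This gives (iii) at every point with no density argument. Property (i) then follows because the a.e.\ identity $H(p_{0}+Dw,x,\omega)=-\lambda v_{\lambda}^{de}$ combined with the constant shift and quasiconvexity of $H(\cdot,x,\omega)$ upgrades the a.e.\ subsolution inequality to a viscosity one.
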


\begin{proof}
	Let us denote by $H^{de}(p,x,\omega)$ the dual Hamiltonian of $H(p,x,\omega)$ in the sense of evenness. To be more precise, $H^{de}(p,x,\omega) := H(-p,x,\omega)$. The Lemma \ref{the symmetry of effective Hamiltonians in evenness} states that $\overline{H^{de}}(-p_{0}) = \overline{H}(p_{0})$. For any $(\lambda,\omega)\in(0,\infty)\times\Omega$, let $v_{\lambda}^{de}(x, - p_{0},\omega)$ be the unique viscosity solution of the following equation.
	\begin{eqnarray*}
	\lambda v_{\lambda}^{de}(x, - p_{0},\omega) + H^{de}(-p_{0} + Dv_{\lambda}^{de}(x, - p_{0},\omega),x,\omega) = 0, && x\in\RR^{d}
	\end{eqnarray*}
	By the regularly homogenizability of $H^{de}(p,x,\omega)$ at $-p_{0}$, then 
	\begin{equation*}
	\PP\left[ \omega\in\Omega \Bigg| \limsup_{\lambda\rightarrow 0}\max_{|x|\leq\frac{R}{\lambda}} \left| \lambda v_{\lambda}^{de}(x, - p_{0},\omega) + \overline{H^{de}}(-p_{0})\right| = 0, \text{ for any } R > 0 \right] = 1
	\end{equation*}	
	Let us first set
	\begin{equation*}
	\widetilde{\Omega} := \left\lbrace \omega\in\Omega \Big| \limsup_{\lambda\rightarrow 0}\max_{|x|\leq\frac{R}{\lambda}} \left| \lambda v_{\lambda}^{de}(x, - p_{0},\omega) + \overline{H^{de}}(-p_{0})\right| = 0, \text{ for any } R > 0\right\rbrace 
	\end{equation*}
	Let us also set $\epsilon_{0} := 3\left( \overline{H^{de}}(-p_{0}) - \mu \right) > 0$ and fix $(\epsilon,R,\omega)\in(0,\epsilon_{0}) \times (0,\infty) \times \widetilde{\Omega}$. Then let us define the constant $\lambda_{0} = \lambda_{0}(p_{0},\epsilon,R,\omega) > 0$ as below.
	\begin{eqnarray*}
	\lambda_{0} := \sup\left\lbrace \lambda \in (0,1) \Bigg|
	\max_{|x|\leq\frac{R}{\lambda}} \left| \lambda v_{\lambda}^{de}(x,-p_{0},\omega) + \overline{H^{de}}(-p_{0})\right| < \frac{\epsilon}{3} \right\rbrace 
	\end{eqnarray*}
	Now, for any  $0 < \lambda < \lambda_{0}$, let us pick the function $w_{\lambda}^{\epsilon,R}(x,p_{0},\omega)$ as follows.
	\begin{eqnarray*}
	w_{\lambda}^{\epsilon,R}(x,p_{0},\omega) := - v_{\lambda}^{de}(x, - p_{0},\omega) - \frac{6\overline{H^{de}}(-p_{0}) + 2\epsilon}{3\lambda}, && x \in B_{\frac{R}{\lambda}}(0)
	\end{eqnarray*}
	\underline{Step 1: (i).} By the Lipschitz continuity of $v_{\lambda}^{de}(\cdot,-p_{0},\omega)$ and $w_{\lambda}^{\epsilon,R}(\cdot,p_{0},\omega)$, the relations below hold for a.e. $x\in B_{\frac{R}{\lambda}}(0)$.
	\begin{eqnarray*}
	&&\lambda w_{\lambda}^{\epsilon,R}(x,p_{0},\omega) + H(p_{0} + Dw_{\lambda}^{\epsilon,R}(x,p_{0},\omega),x,\omega)\\
	&=& \lambda w_{\lambda}^{\epsilon,R}(x,p_{0},\omega) + H^{de}(-p_{0} + Dv_{\lambda}^{de}(x,-p_{0},\omega),x,\omega)\\
	&=& \lambda w_{\lambda}^{\epsilon,R}(x,p_{0},\omega) - \lambda v_{\lambda}^{de}(x,-p_{0},\omega)\\
	&\leq& -2\lambda v_{\lambda}^{de}(x,-p_{0},\omega) - 2\overline{H^{de}}(-p_{0}) - \frac{2\epsilon}{3}\\
	&\leq& 0
	\end{eqnarray*} 
	Because $H(\cdot,x,\omega)$ is quasiconvex, the inequality below holds in the viscosity sense.
	\begin{eqnarray*}
	\lambda w_{\lambda}^{\epsilon,R}(x,p_{0},\omega) + H(p_{0} + Dw_{\lambda}^{\epsilon,R}(x,p_{0},\omega),x,\omega) \leq 0, && x\in B_{\frac{R}{\lambda}}(0)
	\end{eqnarray*}
	\underline{Step 2: (ii).} Fix any $x\in B_{\frac{R}{\lambda}}(0)$, then
	\begin{eqnarray*}
	\left| \lambda w_{\lambda}^{\epsilon,R}(x,p_{0},\omega) + \overline{H}(p_{0})\right| = \left| \lambda v_{\lambda}^{de}(x,-p_{0},\omega) + \overline{H^{de}}(-p_{0}) + \frac{2\epsilon}{3}\right| < \epsilon
	\end{eqnarray*}
	\underline{Step 3: (iii).} Fix any $x\in B_{\frac{R}{\lambda}}(0)$ and let $q_{0}\in D^{+}w_{\lambda}^{\epsilon,R}(x,p_{0},\omega)$, then $-q_{0}\in D^{-}v_{\lambda}^{de}(x,-p_{0},\omega)$. Equivalently,
	\begin{equation*}
	\lambda v_{\lambda}^{de}(x,-p_{0},\omega) + H^{de}(-p_{0} - q_{0},x,\omega) \geq 0
	\end{equation*}
	Owning to $0 < \lambda < \lambda_{0}$, (iii) follows naturally from below (recall the Lemma \ref{the symmetry of effective Hamiltonians in evenness}).
	\begin{eqnarray*}
	H(p_{0} + q_{0},x,\omega) &=& H^{de}(-p_{0} - q_{0},x,\omega)\\
	&\geq& -\lambda v_{\lambda}^{de}(x,-p_{0},\omega) > \overline{H^{de}}(-p_{0}) - \frac{\epsilon}{3} = \overline{H}(p_{0}) - \frac{\epsilon}{3}
	\end{eqnarray*}
\end{proof}

\begin{lemma}[quasiconcave Hamiltonian]\label{subsolutions with constraint supergradient level set concave}
	Under the assumptions of (A1) - (A3), let $H(p,x,\omega)$ be a quasiconcave Hamiltonian, whose effective Hamiltonian is $\overline{H}(p)$. Fix any $p_{0}\in\RR^{d}$, then there exists an event $\widetilde{\Omega}\subseteq\Omega$ with $\PP\left[ \widetilde{\Omega}\right] = 1$. Such that for any $(\epsilon,R,\omega)\in (0,1) \times (0,\infty) \times \widetilde{\Omega}$, there exists $\lambda_{0} = \lambda_{0}(p_{0},\epsilon, R, \omega) > 0$, and the following holds. 
	\begin{eqnarray*}
	p_{0} + D^{+}v_{\lambda}(x,p_{0},\omega) \subseteq \left\lbrace q\in\RR^{d} \Big| H(q,x,\omega) > \overline{H}(p_{0}) - \epsilon\right\rbrace, && x \in B_{\frac{R}{\lambda}}(0) 
	\end{eqnarray*}	
	where $0 < \lambda < \lambda_{0}$, and $v_{\lambda}(x,p_{0},\omega)$ is the unique solution to the following equation.
	\begin{eqnarray}\label{the ergodic problem for a level set concave Hamiltonian}
		\lambda v_{\lambda}(x,p_{0},\omega) + H(p_{0} + Dv_{\lambda}(x,p_{0},\omega),x,\omega) = 0, && x\in\RR^{d}
	\end{eqnarray}	
\end{lemma}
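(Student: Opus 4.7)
The plan is to exploit the quasiconcavity of $H(\cdot,x,\omega)$ directly, avoiding the passage through an even dual that was needed in the quasiconvex case treated in Lemma~\ref{subsolutions with constraint supergradient level set convex}. The underlying intuition is simple: regular homogenizability forces $-\lambda v_{\lambda}\approx\overline{H}(p_{0})$ uniformly on the ball, so at every point where $v_{\lambda}$ is differentiable the viscosity equation becomes the classical pointwise near-identity $H(p_{0}+Dv_{\lambda}(y),y,\omega)\approx\overline{H}(p_{0})$. Since the superlevel sets of $H(\cdot,x,\omega)$ are convex by quasiconcavity, this lower bound propagates from the reachable gradients to the full superdifferential via a simple convex-combination argument.

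Step by step I would proceed as follows. First, using Definition~\ref{the definition of regularly homogenizable} applied to every rational $R>0$, select a full-measure event $\widetilde\Omega$ on which $\max_{|y|\le 2R/\lambda}|\lambda v_{\lambda}(y,p_{0},\omega)+\overline{H}(p_{0})|\to 0$ as $\lambda\to 0$ for every $R>0$; then fix $(\epsilon,R,\omega)\in(0,1)\times(0,\infty)\times\widetilde\Omega$ and choose $\lambda_{0}>0$ so that this maximum is strictly below $\epsilon/2$ whenever $0<\lambda<\lambda_{0}$. Second, coercivity (A2) makes $v_{\lambda}(\cdot,p_{0},\omega)$ globally Lipschitz, hence differentiable a.e.~by Rademacher; at every differentiability point $y$ one has $D^{+}v_{\lambda}(y)=D^{-}v_{\lambda}(y)=\{Dv_{\lambda}(y)\}$, so both viscosity inequalities collapse to the identity $H(p_{0}+Dv_{\lambda}(y),y,\omega)=-\lambda v_{\lambda}(y)>\overline{H}(p_{0})-\epsilon/2$ for $|y|\le 2R/\lambda$. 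Third, for $x\in B_{R/\lambda}(0)$ and $q\in D^{+}v_{\lambda}(x,p_{0},\omega)$ invoke the classical Clarke-type characterization
\[
D^{+}v_{\lambda}(x)\subseteq \mathrm{conv}\bigl\{\lim_{k\to\infty}Dv_{\lambda}(y_{k}):y_{k}\to x,\ v_{\lambda}\text{ differentiable at }y_{k}\bigr\}
\]
to write $q=\sum_{j}\alpha_{j}r_{j}$ with $\alpha_{j}\ge 0$, $\sum_{j}\alpha_{j}=1$, and each $r_{j}$ a reachable gradient; for $\lambda$ small the approximating sequences stay inside $B_{2R/\lambda}(0)$, so passing to the limit in the identity from Step~2 using the modulus of continuity in (A3) yields $H(p_{0}+r_{j},x,\omega)\ge \overline{H}(p_{0})-\epsilon/2$ for each $j$. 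Finally, the superlevel set $S:=\{q'\in\RR^{d}:H(q',x,\omega)\ge \overline{H}(p_{0})-\epsilon/2\}$ is convex by quasiconcavity of $H(\cdot,x,\omega)$ and contains each $p_{0}+r_{j}$, so the convex combination $p_{0}+q=\sum_{j}\alpha_{j}(p_{0}+r_{j})$ also lies in $S$, which gives $H(p_{0}+q,x,\omega)\ge \overline{H}(p_{0})-\epsilon/2>\overline{H}(p_{0})-\epsilon$.

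The main obstacle is the Clarke-type approximation in Step~3: a priori the viscosity equation at a kink of $v_{\lambda}$ yields only the one-sided subsolution inequality $H(p_{0}+q,x,\omega)\le -\lambda v_{\lambda}(x)$, which is the \emph{wrong} direction for the desired lower bound. Approximating a general element of $D^{+}v_{\lambda}(x)$ by convex combinations of reachable gradients is precisely what upgrades this one-sided information into the two-sided pointwise identity available at differentiability points, after which quasiconcavity does all the work. A secondary technical nuisance is ensuring that the approximating sequences $y_{k}\to x$ remain inside the region where Step~1 supplies uniform control, which is why one enlarges the ball to $B_{2R/\lambda}(0)$ from the start.
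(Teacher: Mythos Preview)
Your argument is correct, and it is a genuinely different packaging of the same underlying idea used in the paper.

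The paper does not work with $H$ directly. Instead it passes to the quasiconvex Hamiltonian $G^{de}(p,x,\omega):=-H(p,x,\omega)$, shifts $v_{\lambda}$ by an additive constant $w_{\lambda}:=v_{\lambda}+(6\overline{H}(p_{0})-2\epsilon)/(3\lambda)$ so that the a.e.\ identity $\lambda v_{\lambda}+H(p_{0}+Dv_{\lambda})=0$ becomes the a.e.\ inequality $\lambda w_{\lambda}+G^{de}(p_{0}+Dw_{\lambda})\le 0$, and then invokes the standard fact that for \emph{quasiconvex} Hamiltonians a Lipschitz a.e.\ subsolution is a viscosity subsolution. Reading off the viscosity subsolution inequality at any $q_{0}\in D^{+}w_{\lambda}(x)=D^{+}v_{\lambda}(x)$ and unwinding gives $H(p_{0}+q_{0},x,\omega)\ge\lambda w_{\lambda}(x)>\overline{H}(p_{0})-\epsilon$.

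Your route avoids the negation and the shift: you stay with $H$, use the Clarke characterization $D^{+}v_{\lambda}(x)\subseteq\partial_{\mathrm{Clarke}}v_{\lambda}(x)=\mathrm{conv}\{\text{reachable gradients}\}$, pull the pointwise identity $H(p_{0}+Dv_{\lambda}(y),y,\omega)=-\lambda v_{\lambda}(y)$ through the limit via (A3), and then close up under convex combinations using quasiconcavity of $H(\cdot,x,\omega)$. This is exactly how one \emph{proves} the ``a.e.\ subsolution $\Rightarrow$ viscosity subsolution'' step the paper invokes, so the two arguments are equivalent at the core; yours simply unpacks that black box and applies it on the $H$ side rather than the $-H$ side. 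The paper's version has the virtue of reusing verbatim the mechanism from the quasiconvex Lemma~\ref{subsolutions with constraint supergradient level set convex}; yours is more self-contained and makes the role of the convex superlevel sets of $H(\cdot,x,\omega)$ explicit. Two minor remarks on your write-up: the inclusion $D^{+}v_{\lambda}(x)\subseteq\partial_{\mathrm{Clarke}}v_{\lambda}(x)$ holds for any locally Lipschitz function (via $D^{+}f=-D^{-}(-f)\subseteq-\partial_{\mathrm{Clarke}}(-f)=\partial_{\mathrm{Clarke}}f$), and the finite convex combination is justified because in $\RR^{d}$ the set of reachable gradients is compact, so its convex hull is already closed.
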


\begin{proof}
	Let us fix $p_{0}\in\RR^{d}$ and denote that,
	\begin{eqnarray*}
	G(p,x,\omega) := -H(-p,x,\omega) &\text{ and }& u_{\lambda}(x,-p_{0},\omega) := -v_{\lambda}(x,p_{0},\omega)
	\end{eqnarray*}
	The Lemma \ref{the symmetry between Hamiltonian and its dual in evenness and negativity} indicates the regularly homogenizability of $H(p,x,\omega)$, moreover, 
	\begin{eqnarray*}
	\lambda u_{\lambda}(x,-p_{0},\omega) + G(-p_{0} + Du_{\lambda}(x,-p_{0},\omega),x,\omega) = 0, && x\in\RR^{d}
	\end{eqnarray*}
	Let us select $\widetilde{\Omega}$ as follows,
	\begin{equation*}
	\widetilde{\Omega} := \left\lbrace \omega\in\Omega\Bigg| \limsup_{\lambda\rightarrow 0}\max_{|x|\leq\frac{R}{\lambda}} \left| \lambda v_{\lambda}(x,p_{0},\omega) + \overline{H}(p_{0})\right| = 0, \text{ for any } R > 0 \right\rbrace 
	\end{equation*}
	Then for any $(\epsilon,R,\omega) \in (0,1) \times (0,\infty) \times \widetilde{\Omega}$, we define the constant $\lambda_{0} = \lambda_{0}(p_{0},\epsilon,R,\omega) > 0$ as below
	\begin{eqnarray*}
	\lambda_{0} := \sup \left\lbrace\lambda \in (0,1)\Bigg| \max_{|x|\leq\frac{R}{\lambda}} \left| \lambda v_{\lambda}(x,p_{0},\omega) + \overline{H}(p_{0})\right| < \frac{\epsilon}{3}\right\rbrace
	\end{eqnarray*}
	Let us also denote that,
	\begin{equation*}
	G^{de}(p,x,\omega) := G(-p,x,\omega) = -H(p,x,\omega)
	\end{equation*}
	\begin{equation*}
	w_{\lambda}(x,p_{0},\omega) := v_{\lambda}(x,p_{0},\omega) + \frac{6\overline{H}(p_{0}) - 2\epsilon}{3\lambda}
	\end{equation*}
	Owning to the Lipschitz continuity of $w_{\lambda}(\cdot,p_{0},\omega)$ and $u_{\lambda}(\cdot,-p_{0},\omega)$, for any $0 < \lambda < \lambda_{0}$ and for a.e. $x\in B_{\frac{R}{\lambda}}(0)$, we have that
	\begin{eqnarray*}
	&& \lambda w_{\lambda}(x,p_{0},\omega) + G^{de}(p_{0} + Dw_{\lambda}(x,p_{0},\omega),x,\omega)\\
	&=& \lambda w_{\lambda}(x,p_{0},\omega) + G(-p_{0} + Du_{\lambda}(x,-p_{0},\omega),x,\omega)\\
	&=& \lambda w_{\lambda}(x,p_{0},\omega) - \lambda u_{\lambda}(x,-p_{0},\omega)\\
	&=& 2\lambda v_{\lambda}(x,p_{0},\omega) + 2\overline{H}(p_{0}) - \frac{2\epsilon}{3} \leq 0
	\end{eqnarray*}
	Quasiconvexity of $G^{de}(\cdot,x,\omega)$ induces the following inequality in viscosity sense.
	\begin{eqnarray*}
	\lambda w_{\lambda}(x,p_{0},\omega) + G^{de}(p_{0} + Dw_{\lambda}(x,p_{0},\omega),x,\omega) \leq 0, && x\in B_{\frac{R}{\lambda}}(0)
	\end{eqnarray*}
	Then for any $q_{0}\in D^{+}v_{\lambda}(x,p_{0},\omega) = D^{+}w_{\lambda}(x,p_{0},\omega)$, where $\omega\in\widetilde{\Omega}$, $x\in B_{\frac{R}{\lambda}}(0)$ and $0 < \lambda < \lambda_{0}$, the desired conclusion can be drawn from the following relations.
	\begin{eqnarray*}
	H(p_{0} + q_{0},x,\omega) &=& -G^{de}(p_{0} + q_{0},x,\omega)\\
	&\geq& \lambda w_{\lambda}(x,p_{0},\omega) = \lambda v_{\lambda}(x,p_{0},\omega) + 2\overline{H}(p_{0}) - \frac{2\epsilon}{3} > \overline{H}(p_{0}) - \epsilon
	\end{eqnarray*}
	
\end{proof}

\subsection{Supersolutions with constraint subdifferentials}

\begin{lemma}[quasiconvex Hamiltonian]\label{supersolutions with constraint subgradient level set convex}
	Under the assumptions of (A1) - (A3), let $H(p,x,\omega)$ be a quasiconvex Hamiltonian, whose effective Hamiltonian is $\overline{H}(p)$. Fix any $p_{0}\in\RR^{d}$, then there exists an event $\widetilde{\Omega}\subseteq \Omega$ with $\PP\left[ \widetilde{\Omega}\right] = 1$. Such that for any $(\epsilon,R,\omega)\in(0,1) \times (0,\infty) \times \widetilde{\Omega}$, there exists $\gamma_{0} = \gamma_{0}(p_{0},\epsilon,R,\omega) > 0$, and the following holds.
	\begin{eqnarray*}
	p_{0} + D^{-}v_{\lambda}(x,p_{0},\omega) \subseteq \left\lbrace q\in\RR^{d} \Big| H(q,x,\omega) < \overline{H}(p_{0}) + \epsilon\right\rbrace, && x\in B_{\frac{R}{\lambda}}(0)
	\end{eqnarray*}
	where $0 < \lambda < \gamma_{0}$, and $v_{\lambda}(x,p_{0},\omega)$ is the unique solution to the following equation.
	\begin{eqnarray}\label{the ergodic problem for a level set convex Hamiltonian}
	\lambda v_{\lambda} + H(p_{0} + Dv_{\lambda},x,\omega) = 0, && x\in\RR^{d}
	\end{eqnarray}
\end{lemma}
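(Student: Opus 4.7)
The plan is to reduce this statement to Lemma \ref{subsolutions with constraint supergradient level set concave} by passing through the \emph{full dual} Hamiltonian $G(p,x,\omega) := -H(-p,x,\omega)$, which turns the ``supersolution / subdifferential'' problem for the quasiconvex $H$ into the ``subsolution / superdifferential'' problem already handled for a quasiconcave Hamiltonian. Observe that $G$ is quasiconcave in $p$ (sublevel sets of $H$ become superlevel sets of $G$), and Lemma \ref{the symmetry between Hamiltonian and its dual in evenness and negativity} gives the regular homogenizability of $G$ at every point with
\[
\overline{G}(p) \;=\; -\,\overline{H}(-p), \qquad\text{in particular } \overline{G}(-p_{0}) \;=\; -\,\overline{H}(p_{0}).
\]

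Next, I set $u_{\lambda}(x,-p_{0},\omega) := -\,v_{\lambda}(x,p_{0},\omega)$. Using the elementary reversal $q\in D^{-}v_{\lambda}(x,p_{0},\omega)\Leftrightarrow -q\in D^{+}u_{\lambda}(x,-p_{0},\omega)$, the viscosity identity (\ref{the ergodic problem for a level set convex Hamiltonian}) for $v_{\lambda}$ transcribes, just as in the proof of Lemma \ref{the symmetry between Hamiltonian and its dual in evenness and negativity}, to
\[
\lambda u_{\lambda}(x,-p_{0},\omega) + G\bigl(-p_{0} + Du_{\lambda}(x,-p_{0},\omega),\,x,\omega\bigr) \;=\; 0, \qquad x\in\RR^{d},
\]
in the viscosity sense. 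By uniqueness, $u_{\lambda}(\cdot,-p_{0},\omega)$ is the unique solution of the auxiliary macroscopic problem for the quasiconcave Hamiltonian $G$ at the vector $-p_{0}$.

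Now I apply Lemma \ref{subsolutions with constraint supergradient level set concave} to $G$ at $-p_{0}$. This produces an event $\widetilde{\Omega}\subseteq\Omega$ with $\PP[\widetilde{\Omega}]=1$ such that, for every $(\epsilon,R,\omega)\in(0,1)\times(0,\infty)\times\widetilde{\Omega}$, there exists $\gamma_{0}=\gamma_{0}(p_{0},\epsilon,R,\omega)>0$ so that for $0<\lambda<\gamma_{0}$,
\[
-p_{0} + D^{+}u_{\lambda}(x,-p_{0},\omega) \;\subseteq\; \bigl\{\,q\in\RR^{d} : G(q,x,\omega) > \overline{G}(-p_{0}) - \epsilon\,\bigr\}, \qquad x\in B_{R/\lambda}(0).
\]

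Finally I translate back. Fix any $q\in D^{-}v_{\lambda}(x,p_{0},\omega)$; then $-p_{0}-q\in -p_{0} + D^{+}u_{\lambda}(x,-p_{0},\omega)$, so the inclusion above yields $G(-p_{0}-q,x,\omega) > \overline{G}(-p_{0}) - \epsilon = -\overline{H}(p_{0})-\epsilon$, which, by the definition of $G$, is exactly $H(p_{0}+q,x,\omega) < \overline{H}(p_{0}) + \epsilon$. The $\widetilde{\Omega}$ and $\gamma_{0}$ produced for $G$ serve as the ones required for $H$, completing the proof. There is no substantive obstacle here; the only point to track carefully is the consistent composition of the two reversals ($p\mapsto -p$ on the momentum side and $v\mapsto -v$ on the solution side) so that the sub/superdifferential exchange and the sign on the effective Hamiltonian from Lemma \ref{the symmetry between Hamiltonian and its dual in evenness and negativity} fit together coherently.
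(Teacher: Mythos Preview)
Your proof is correct and follows essentially the same route as the paper: dualize via $G(p,x,\omega):=-H(-p,x,\omega)$, set $u_{\lambda}(x,-p_{0},\omega):=-v_{\lambda}(x,p_{0},\omega)$, apply Lemma \ref{subsolutions with constraint supergradient level set concave} to the quasiconcave $G$ at $-p_{0}$, and translate the resulting superdifferential inclusion back through the sign reversals (using Lemma \ref{the symmetry between Hamiltonian and its dual in evenness and negativity}) to the desired subdifferential inclusion for $H$. The only differences from the paper are cosmetic (you call the auxiliary function $u_{\lambda}$ rather than $w_{\lambda}$, and you rename the threshold $\gamma_{0}$ directly rather than first calling it $\lambda_{0}$).
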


\begin{proof}
	Let us fix $p_{0}\in\RR^{d}$ and denote
	\begin{eqnarray*}
	G(p,x,\omega) := -H(-p,x,\omega) &\text{ and }& w_{\lambda}(x,-p_{0},\omega) = -v_{\lambda}(x,p_{0},\omega)
	\end{eqnarray*}
	then as the proof of the Lemma \ref{the symmetry between Hamiltonian and its dual in evenness and negativity},
	\begin{eqnarray*}
	\lambda w_{\lambda}(x,-p_{0},\omega) + G(-p_{0} + Dw_{\lambda}(x,-p_{0},\omega),x,\omega) = 0, && x\in\RR^{d}
	\end{eqnarray*}
	Let $q_{0} := - p_{0}$, an application of the Lemma \ref{subsolutions with constraint supergradient level set concave} to the Hamiltonian $G(p,x,\omega)$ and the solution $w_{\lambda}(x,q_{0},\omega)$ at $q_{0}$ implies the statement as follows. There exists an event $\widetilde{\Omega}\subseteq \Omega$ with $\PP\left[ \widetilde{\Omega}\right] = 1$. Such that for any $(\epsilon,R,\omega) \in (0,1)\times (0,\infty) \times \widetilde{\Omega}$, there exists $\lambda_{0} = \lambda_{0}(q_{0},\epsilon,R,\omega) > 0$, such that the following holds for any $0 < \lambda < \lambda_{0}$.
	\begin{eqnarray*}
	q_{0} + D^{+}w_{\lambda}(x,q_{0},\omega) \subseteq \left\lbrace q\in\RR^{d}\Big| G(q,x,\omega) > \overline{G}(q_{0}) - \epsilon\right\rbrace, && x \in B_{\frac{R}{\lambda}}(0)
	\end{eqnarray*}
	which can be equivalently stated as follows (recall the Lemma \ref{the symmetry between Hamiltonian and its dual in evenness and negativity}).
	\begin{eqnarray*}
	p_{0} + D^{-}v_{\lambda}(x,p_{0},\omega) \subseteq \left\lbrace q\in\RR^{d} \Big| H(q,x,\omega) < \overline{H}(p_{0}) + \epsilon\right\rbrace 
	\end{eqnarray*}
	The Lemma follows if we keep the same $\widetilde{\Omega}$ and pick $\gamma_{0}(p_{0},\epsilon,R,\omega) := \lambda_{0}(q_{0},\epsilon,R,\omega)$.
\end{proof}

\begin{lemma}[quasiconcave Hamiltonian]\label{supersolutions with constraint subgradient level set concave}
Under the assumptions of (A1) - (A3), let $H(p,x,\omega)$ be a quasiconcave Hamiltonian, whose effective Hamiltonian is $\overline{H}(p)$, and set $\mathcal{M} := \max\limits_{p\in\RR^{d}}\overline{H}(p)$. Fix any $p_{0}\in\RR^{d}$, with $\overline{H}(p_{0}) < \mathcal{M}$, then there exists an event $\widetilde{\Omega}\subseteq\Omega$ with $\PP\left[ \widetilde{\Omega}\right] = 1$, a number $\epsilon_{0} = \epsilon_{0}(p_{0}) > 0$, and a statement as follows. 
	
Fix any $(\epsilon,R,\omega)\in(0,\epsilon_{0}) \times (0,\infty) \times \widetilde{\Omega}$, there exists $\gamma_{0} = \gamma_{0}(p_{0},\epsilon, R, \omega) > 0$, such that if $0 < \lambda < \gamma_{0}$, then the following (1) - (3) hold.
\begin{enumerate}
	\item [(1)] There is a supersolution $m_{\lambda}^{\epsilon,R}(x,p_{0},\omega)$ of the following equation.
	\begin{eqnarray*}
		\lambda u + H(p_{0} + Du,x,\omega) = 0, && x\in B_{\frac{R}{\lambda}}(0)
	\end{eqnarray*}
	\item [(2)] $-\lambda m_{\lambda}^{\epsilon,R}(x,p_{0},\omega)$ stays in the $\epsilon$-neighborhood of $\overline{H}(p_{0})$, uniformly in $ B_{\frac{R}{\lambda}}(0)$.
	\begin{equation*}
	\max_{|x|\leq\frac{R}{\lambda}} \left| \lambda m_{\lambda}^{\epsilon,R}(x,p_{0},\omega) + \overline{H}(p_{0})\right| < \epsilon
	\end{equation*}
	\item [(3)] Fix any $x\in B_{\frac{R}{\lambda}}(0)$, then
	\begin{equation*}
	p_{0} + D^{-}m_{\lambda}^{\epsilon,R}(x,p_{0},\omega) \subseteq \left\lbrace q\in\RR^{d} \Big| H(q,x,\omega) < \overline{H}(p_{0}) + \frac{\epsilon}{3}\right\rbrace 
	\end{equation*}
	\end{enumerate}
\end{lemma}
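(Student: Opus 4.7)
The plan is to reduce to Lemma \ref{subsolutions with constraint supergradient level set convex} through the negative-evenness duality already exploited in Lemma \ref{supersolutions with constraint subgradient level set convex}. Since $H(\cdot,x,\omega)$ is quasiconcave, the dual Hamiltonian $G(p,x,\omega) := -H(-p,x,\omega)$ is quasiconvex, and Lemma \ref{the symmetry between Hamiltonian and its dual in evenness and negativity} gives $\overline{G}(q) = -\overline{H}(-q)$. In particular $\min_{q\in\RR^{d}}\overline{G}(q) = -\mathcal{M}$, so with $q_{0} := -p_{0}$ the standing hypothesis $\overline{H}(p_{0}) < \mathcal{M}$ translates into the strict inequality $\overline{G}(q_{0}) > \min \overline{G}$ needed to feed $G$ and $q_{0}$ into Lemma \ref{subsolutions with constraint supergradient level set convex}.

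That lemma then produces a full measure event $\widetilde{\Omega}$, a constant $\epsilon_{0} = \epsilon_{0}(q_{0}) > 0$, and, for each admissible triple $(\epsilon,R,\omega)$, a constant $\lambda_{0} > 0$ together with a subsolution $w_{\lambda}^{\epsilon,R}(x,q_{0},\omega)$ of $\lambda u + G(q_{0} + Du,x,\omega) = 0$ on $B_{R/\lambda}(0)$ verifying the analogues of (i)--(iii). I declare the $\widetilde{\Omega}$ and $\epsilon_{0}$ of the present statement to be these, take $\gamma_{0} := \lambda_{0}$, and define
\[ m_{\lambda}^{\epsilon,R}(x,p_{0},\omega) := - w_{\lambda}^{\epsilon,R}(x,q_{0},\omega). \]
Conclusions (1)--(3) then follow from the elementary duality $q \in D^{-}m_{\lambda}^{\epsilon,R}(x,p_{0},\omega) \Leftrightarrow -q \in D^{+}w_{\lambda}^{\epsilon,R}(x,q_{0},\omega)$ together with the identity $G(q_{0} + q,x,\omega) = -H(p_{0} - q,x,\omega)$. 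Conclusion (2) is just a sign flip of (ii); the viscosity subsolution inequality $\lambda w_{\lambda}^{\epsilon,R} + G(q_{0}+q,x,\omega) \leq 0$ for $q \in D^{+}w_{\lambda}^{\epsilon,R}$ rewrites, after the substitution $\tilde q = -q$, as the viscosity supersolution inequality $\lambda m_{\lambda}^{\epsilon,R} + H(p_{0}+\tilde q,x,\omega) \geq 0$ for $\tilde q \in D^{-}m_{\lambda}^{\epsilon,R}$, proving (1); and the superdifferential constraint (iii), i.e.\ $G(-\tilde q,x,\omega) > \overline{G}(q_{0}) - \epsilon/3$ for $-\tilde q \in q_{0} + D^{+}w_{\lambda}^{\epsilon,R}$, becomes $H(\tilde q,x,\omega) < \overline{H}(p_{0}) + \epsilon/3$ for $\tilde q \in p_{0} + D^{-}m_{\lambda}^{\epsilon,R}$, proving (3).

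I do not anticipate a genuine obstacle, since the argument is structurally identical to the reduction already carried out in Lemma \ref{supersolutions with constraint subgradient level set convex}, only with the roles of quasiconvexity and quasiconcavity interchanged. The single point deserving care is the passage of the geometric hypothesis $\overline{H}(p_{0}) < \mathcal{M}$ through the duality to $\overline{G}(q_{0}) > \min \overline{G}$, since it is precisely this non-flat-level condition that licenses Lemma \ref{subsolutions with constraint supergradient level set convex} and thereby yields the nontrivial constants $\epsilon_{0}$ and $\gamma_{0}$ in the present statement.
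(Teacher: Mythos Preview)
Your proposal is correct and follows essentially the same route as the paper: define $G(p,x,\omega) := -H(-p,x,\omega)$ and $q_{0} := -p_{0}$, apply Lemma~\ref{subsolutions with constraint supergradient level set convex} to the quasiconvex $G$ at $q_{0}$, then set $m_{\lambda}^{\epsilon,R}(x,p_{0},\omega) := -w_{\lambda}^{\epsilon,R}(x,q_{0},\omega)$ and $\gamma_{0} := \lambda_{0}$. The paper is terser about the translation of (i)--(iii) into (1)--(3), simply invoking Lemma~\ref{the symmetry between Hamiltonian and its dual in evenness and negativity}, but your more explicit verification of each item is accurate.
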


\begin{proof}
	Let us denote $G(p,x,\omega) := -H(-p,x,\omega)$ and $q_{0} := -p_{0}$, the Lemma \ref{the symmetry between Hamiltonian and its dual in evenness and negativity} demonstrates that $\overline{G}(q_{0}) = -\overline{H}(-p_{0})$. An application of the Lemma \ref{subsolutions with constraint supergradient level set convex} to $G(p,x,\omega)$ at $q_{0}$ indicates the existence of $\widetilde{\Omega}\subseteq\Omega$ with $\PP\left[ \widetilde{\Omega}\right] = 1$, a number $\epsilon_{0} = \epsilon_{0}(p_{0}) > 0$ and a statement as follows. For any $(\epsilon, R, \omega) \in (0,\epsilon_{0})\times (0,\infty)\times\widetilde{\Omega}$, there exists $\lambda_{0} = \lambda_{0}(q_{0},\epsilon,R,\omega) > 0$, such that if $0 < \lambda < \lambda_{0}$, then the following (i) - (iii) hold.
	\begin{enumerate}
		\item [(i)] There is a subsolution $w_{\lambda}^{\epsilon,R}(x,q_{0},\omega)$ of the following equation.
		\begin{eqnarray*}
			\lambda u + G(q_{0} + Du,x,\omega) = 0, && x\in B_{\frac{R}{\lambda}}(0)
		\end{eqnarray*}
		\item [(ii)] $-\lambda w_{\lambda}^{\epsilon,R}(x,q_{0},\omega)$ stays in the $\epsilon$-neighborhood of $\overline{G}(q_{0})$, uniformly in $ B_{\frac{R}{\lambda}}(0)$.
		\begin{equation*}
		\max_{|x|\leq\frac{R}{\lambda}} \left| \lambda w_{\lambda}^{\epsilon,R}(x,q_{0},\omega) + \overline{G}(q_{0})\right| < \epsilon
		\end{equation*}
		\item [(iii)] Fix any $x\in B_{\frac{R}{\lambda}}(0)$, then
		\begin{equation*}
		q_{0} + D^{+}w_{\lambda}^{\epsilon,R}(x,q_{0},\omega) \subseteq \left\lbrace q\in\RR^{d} \Big| G(q,x,\omega) > \overline{G}(q_{0}) - \frac{\epsilon}{3}\right\rbrace 
		\end{equation*}
	\end{enumerate}	
	Let us set $m_{\lambda}^{\epsilon,R}(x,p_{0},\omega) := -w_{\lambda}^{\epsilon,R}(x,q_{0},\omega)$ and $\gamma_{0}(p_{0},\epsilon,R,\omega) := \lambda_{0}(q_{0},\epsilon,R,\omega)$. Then the Lemma \ref{the symmetry between Hamiltonian and its dual in evenness and negativity} shows the equivalence between (i) - (iii) and (1) - (3).
		
\end{proof}

\section{The proof of the regularly homogenizability}\label{the section of the regularly homogenizability}

\subsection{The base case}
If $\ell = 1$, then the monotonicity condition (\ref{the monotonicity condition}) is vacuum. Throughout this subsection, let the Hamiltonian $H_{1}(p,x,\omega)$ be defined through (\ref{the derivation of the nonconvex Hamiltonian through minmax formula}) (\ref{the monotone ordering of Hamiltonians}). For any $(p,\lambda, \omega)\in\RR^{d}\times(0,\infty)\times\Omega$, let $v_{\lambda}(x,p,\omega)$ be the unique solution of the equation as follows.
\begin{eqnarray}\label{the ergodic problem of Hamilton-Jacobi equation in base case}
\lambda v_{\lambda} + H_{1}(p + Dv_{\lambda},x,\omega) = 0, && x\in\RR^{d}
\end{eqnarray}
Similarly, let $\ddot{v}_{\lambda}(x,p,\omega)$, $\ddot{}$ is either $\check{}$ or $\hat{}$, be the unique solution to the equation below.
\begin{eqnarray}\label{the ergodic problems of decomposed Hamilton-Jacobi equations in base case}
\lambda \ddot{v}_{\lambda} + \ddot{H}_{1}(p + D\ddot{v}_{\lambda},x,\omega) = 0, && x\in\RR^{d}
\end{eqnarray}

\begin{proposition}\label{the result at the base case}
	Let $\ell = 1$ and the assumptions of (A1) - (A4) be in force, then the Hamiltonian $H_{1}(p,x,\omega)$ is regularly homogenizable for any $p\in\RR^{d}$. Moreover, the effective Hamiltonian $\overline{H_{1}}(p)$ is characterized as follows.
	\begin{equation*}
	\overline{H_{1}}(p) = \max\left\lbrace \overline{\check{H}_{1}}(p), \overline{\mathrm{m}}_{1}, \overline{\hat{H}_{1}}(p)\right\rbrace 
	\end{equation*}
\end{proposition}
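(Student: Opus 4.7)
My plan is to establish the regular homogenizability of $H_1$ at every $p\in\RR^d$ by proving matching lower and upper bounds, so that $\lim_{\lambda\to 0}(-\lambda v_\lambda(0,p,\omega))=R(p)$ almost surely, where $R(p):=\max\{\overline{\check{H}_1}(p),\overline{\mathrm{m}}_1,\overline{\hat{H}_1}(p)\}$.

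\textbf{Lower bound.} The three contributions to $R(p)$ are obtained separately. Because $H_1=\max\{\check{H}_1,\hat{H}_1\}\geq\check{H}_1,\hat{H}_1$ pointwise, the cell-problem solutions $\check{v}_\lambda$ and $\hat{v}_\lambda$ are supersolutions of the $H_1$-cell problem, so comparison yields $v_\lambda\leq\min\{\check{v}_\lambda,\hat{v}_\lambda\}$ and hence $\liminf(-\lambda v_\lambda(0))\geq\max\{\overline{\check{H}_1}(p),\overline{\hat{H}_1}(p)\}$ after passing to the limit in $\lambda$. For the bound $\geq\overline{\mathrm{m}}_1$, I use that under the stable-pair structure (Definition~\ref{the stable pair and the contact value} combined with (A4)), $\mathrm{m}_1(x,\omega)$ equals the pointwise minimum of $H_1(\cdot,x,\omega)$. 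Hence at every differentiability point $y$ of the Lipschitz function $v_\lambda$, $-\lambda v_\lambda(y)=H_1(p+Dv_\lambda(y),y,\omega)\geq\mathrm{m}_1(y,\omega)$. Stationary ergodicity lets me, for a.e.\ $\omega$ and any $\delta>0$, pick a radius $R_0(\omega,\delta)$ and a point $y^*\in B_{R_0}$ at which both $\mathrm{m}_1(y^*,\omega)>\overline{\mathrm{m}}_1-\delta$ and $v_\lambda$ is differentiable. The uniform-in-$\lambda$ Lipschitz bound $|Dv_\lambda|\leq L(p)$ coming from coercivity then transfers the estimate via $|\lambda v_\lambda(0)-\lambda v_\lambda(y^*)|\leq L(p)\lambda R_0\to 0$, giving $\liminf(-\lambda v_\lambda(0))\geq\overline{\mathrm{m}}_1$.

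\textbf{Upper bound.} I proceed by case analysis on which term attains $R(p)$. If $R(p)=\overline{\check{H}_1}(p)>\overline{\mathrm{m}}_1$, Lemma~\ref{subsolutions with constraint supergradient level set convex} supplies a subsolution $\check{w}_\lambda^{\epsilon,R}$ of the $\check{H}_1$-cell problem on $B_{R/\lambda}$ with $-\lambda\check{w}_\lambda^{\epsilon,R}$ within $\epsilon$ of $\overline{\check{H}_1}(p)$ and with every $q\in p+D^+\check{w}_\lambda^{\epsilon,R}(x)$ satisfying $\check{H}_1(q,x,\omega)>\overline{\check{H}_1}(p)-\epsilon/3>\overline{\mathrm{m}}_1\geq\mathrm{m}_1(x,\omega)$. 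The stable-pair structure then places $q$ inside $\Delta^c(x,\omega)$, where $H_1=\check{H}_1$; therefore $\check{w}_\lambda^{\epsilon,R}$ is simultaneously a subsolution of the $H_1$-cell problem, and Proposition~\ref{the comparison principle for the ergodic problem in a ball} transfers the bound to $x=0$ on sending $R\to\infty$ then $\lambda\to 0$. The symmetric case $R(p)=\overline{\hat{H}_1}(p)>\overline{\mathrm{m}}_1$ is handled using Lemma~\ref{subsolutions with constraint supergradient level set concave} applied to the solution $\hat{v}_\lambda$ of the $\hat{H}_1$-cell problem, whose superdifferentials are forced inside $\Delta^{\circ}(x,\omega)$ where $H_1=\hat{H}_1$.

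\textbf{Main obstacle.} The plateau case $R(p)=\overline{\mathrm{m}}_1\geq\max\{\overline{\check{H}_1}(p),\overline{\hat{H}_1}(p)\}$ is the principal difficulty: the superdifferential constraints from Lemmas~\ref{subsolutions with constraint supergradient level set convex} and~\ref{subsolutions with constraint supergradient level set concave} no longer force the identification $H_1=\check{H}_1$ or $H_1=\hat{H}_1$ needed above. I plan to reduce this case to the previous ones by approximation. Using the coercivity of the quasiconvex $\overline{\check{H}_1}$, one can select $p_n\to p$ with $\overline{\check{H}_1}(p_n)>\overline{\mathrm{m}}_1$, so that the first upper-bound case applies at $p_n$ and gives $\overline{H_1}(p_n)=\overline{\check{H}_1}(p_n)$; Lemma~\ref{the stability of homogenization} together with Corollary~\ref{the closedness of reguarly homogenizable points} then passes to the limit, pinning down $\overline{H_1}(p)=\overline{\mathrm{m}}_1=R(p)$ on the boundary of the plateau. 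For interior points of $\{\overline{\check{H}_1}<\overline{\mathrm{m}}_1\}\cap\{\overline{\hat{H}_1}<\overline{\mathrm{m}}_1\}$, where no such $p_n$ is available, I resort to a further perturbation of the Hamiltonian itself (for instance $\check{H}_1\mapsto\check{H}_1+\epsilon|q|^2$) that pushes $\overline{\check{H}_1}$ above $\overline{\mathrm{m}}_1$ while preserving the stable-pair structure to leading order, followed by a second application of Lemma~\ref{the stability of homogenization} as $\epsilon\to 0$.
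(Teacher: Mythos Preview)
Your lower bound and your treatment of the two non-plateau cases $\overline{\check{H}_1}(p)>\overline{\mathrm m}_1$ and $\overline{\hat{H}_1}(p)>\overline{\mathrm m}_1$ are correct and essentially identical to the paper's Lemmas~\ref{the lower bound of the effective Hamiltonian at the base case}, \ref{effetive Hamiltonian on the convex part base case} and~\ref{effetive Hamiltonian on the concave part base case}. The boundary-of-plateau argument via $p_n\to p$ is also fine and is absorbed in the paper into the closure statements of those lemmas.

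The genuine gap is in your handling of the \emph{interior} of the plateau $\{\overline{\check{H}_1}\le\overline{\mathrm m}_1\}\cap\{\overline{\hat{H}_1}\le\overline{\mathrm m}_1\}$. The perturbation $\check{H}_1\mapsto\check{H}_1+\epsilon|q|^2$ does not accomplish what you claim. First, a quasiconvex function plus a convex function need not be quasiconvex, so the structure required by Lemma~\ref{subsolutions with constraint supergradient level set convex} may be lost. More fundamentally, even if quasiconvexity survives, your argument needs $\overline{\check{H}_1^\epsilon}(p)>\overline{\mathrm m}_1^\epsilon$ (the contact value for the \emph{perturbed} pair) in order to apply the first upper-bound case to $H_1^\epsilon$; but the perturbation raises both $\overline{\check{H}_1^\epsilon}(p)$ and $\overline{\mathrm m}_1^\epsilon$, and for $p$ strictly inside the plateau and $\epsilon$ small there is no mechanism forcing the inequality to flip. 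Sending $\epsilon\to 0$ therefore never reaches the regime where your Case~1 applies.

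The paper resolves the plateau case by a different, $(x,\omega)$-dependent \emph{vertical} perturbation: for $\kappa\in[0,1]$ set
\[
\ddot{H}_1^\kappa(p,x,\omega):=\ddot{H}_1(p,x,\omega)+\kappa\bigl(\overline{\mathrm m}_1-\mathrm m_1(x,\omega)\bigr),\qquad H_1^\kappa:=H_1+\kappa\bigl(\overline{\mathrm m}_1-\mathrm m_1\bigr).
\]
This shift is nonnegative, preserves quasiconvexity/quasiconcavity and the stable-pair structure, and at $\kappa=1$ forces the contact level to be the constant $\overline{\mathrm m}_1$. Under a harmless nondegeneracy assumption (Remark~\ref{a perturbation}), Lemma~\ref{the equivalence of minimum level sets for convex and concave Hamiltonians} shows that at $\kappa=1$ the $\overline{\mathrm m}_1$-level sets of $\overline{\check{H}_1^1}$ and $\overline{\hat{H}_1^1}$ coincide. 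Monotone continuity in $\kappa$ then gives, for every plateau point $p$, some $\kappa(p)\in[0,1]$ and some $\ddot{}\in\{\check{},\hat{}\}$ with $\overline{\ddot{H}_1^{\kappa(p)}}(p)=\overline{\mathrm m}_1$; comparing $v_\lambda$ with $\ddot{v}_\lambda^{\kappa(p)}$ (and using your already-proved non-plateau cases for the $\kappa(p)$-shifted Hamiltonians) yields $\limsup(-\lambda v_\lambda(0,p,\omega))\le\overline{\mathrm m}_1$. This is the missing idea; your $\epsilon|q|^2$ perturbation cannot substitute for it.
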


\begin{proof}
	It follows from the Remark \ref{a perturbation} and the Lemmas \ref{the lower bound of the effective Hamiltonian at the base case}, \ref{effetive Hamiltonian on the convex part base case}, \ref{effetive Hamiltonian on the concave part base case}, \ref{the equivalence of minimum level sets for convex and concave Hamiltonians}, \ref{the upper bound for the flat piece the base case}.
\end{proof}

\begin{remark}\label{a perturbation}
	By the Lemma \ref{the stability of homogenization}, the homogenization is stable. We can perturb the Hamiltonians and assume without loss of generality that for any $(x,\omega)\in\RR^{d}\times\Omega$, the sets $\left\lbrace p\in\RR^{d}\Big| \ddot{H}_{1}(p,x,\omega) = \mathrm{m}_{1}(x,\omega)\right\rbrace$, $\ddot{}$ is either $\check{}$ or $\hat{}$, have no interior point. i.e.,
	\begin{equation}\label{empty interior for a level set}
	\text{int}\left\lbrace p\in\RR^{d}\Big| \ddot{H}_{1}(p,x,\omega) = \mathrm{m}_{1}(x,\omega)\right\rbrace = \emptyset \tag{E}
	\end{equation}
\end{remark}

\begin{lemma}\label{the lower bound of the effective Hamiltonian at the base case}
	Let $\ell = 1$ and the assumptions of (A1) - (A4) be in force, then
	\begin{equation*}
	\liminf_{\lambda\rightarrow 0} - \lambda v_{\lambda}(0,p,\omega) \geq \max\left\lbrace \overline{\check{H}_{1}}(p),\overline{\hat{H}_{1}}(p), \overline{\mathrm{m}}_{1}\right\rbrace 
	\end{equation*}
\end{lemma}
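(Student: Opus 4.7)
The plan is to handle the three terms in the maximum separately, with the first two being routine and the third being the substantive content.

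For the first two terms, I would argue by comparison. Since $H_{1} = \max\{\check{H}_{1},\hat{H}_{1}\}$ pointwise dominates each of $\check{H}_{1}$ and $\hat{H}_{1}$, the unique solution $v_{\lambda}$ of (\ref{the ergodic problem of Hamilton-Jacobi equation in base case}) is automatically a viscosity subsolution of each of the two decoupled equations in (\ref{the ergodic problems of decomposed Hamilton-Jacobi equations in base case}). The comparison principle thus gives $v_{\lambda}\leq \check{v}_{\lambda}$ and $v_{\lambda}\leq \hat{v}_{\lambda}$, so $-\lambda v_{\lambda}(0,p,\omega)\geq \max\{-\lambda\check{v}_{\lambda}(0,p,\omega),-\lambda\hat{v}_{\lambda}(0,p,\omega)\}$. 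Because $\check{H}_{1}$ is quasiconvex and $\hat{H}_{1}$ is quasiconcave, their regular homogenizability at $p$ (established in \cite{Davini and Siconolfi MA, Armstrong and Souganidis IMRN}) lets me take $\liminf_{\lambda\to 0}$ and recover the lower bound by $\max\{\overline{\check{H}_{1}}(p),\overline{\hat{H}_{1}}(p)\}$.

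The heart of the argument is the bound by $\overline{\mathrm{m}}_{1}$. My first step is the pointwise inequality
\[
H_{1}(q,x,\omega)\ \geq\ \mathrm{m}_{1}(x,\omega) \qquad \text{for every } q\in\RR^{d} \text{ and every } (x,\omega).
\]
If $\Delta(x,\omega):=\{q:\hat{H}_{1}(q,x,\omega)\geq\check{H}_{1}(q,x,\omega)\}=\emptyset$, this is immediate from $H_{1}=\check{H}_{1}$ and $\mathrm{m}_{1}=\min\check{H}_{1}$. Otherwise coercivity (A2) makes $\Delta$ bounded, and (A4) together with (A3) gives $\check{H}_{1}\equiv\hat{H}_{1}\equiv\mathrm{m}_{1}$ on $\partial\Delta$ and $\check{H}_{1}>\mathrm{m}_{1}$ on $\Delta^{c}$. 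On $\Delta^{c}$ we then have $H_{1}=\check{H}_{1}>\mathrm{m}_{1}$; on $\Delta$ we have $H_{1}=\hat{H}_{1}$, and the convex super-level set $\{\hat{H}_{1}\geq\mathrm{m}_{1}\}$ (by quasiconcavity) contains $\partial\Delta$, hence the entire bounded set $\Delta$, since every point of $\Delta$ lies on a chord joining two points of $\partial\Delta$. This convex-geometric step is precisely where the stable-pairing assumption (A4) excludes a strict saddle, and it is the step I expect to be the main obstacle.

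With the pointwise inequality in hand, I would transfer it to $-\lambda v_{\lambda}$ as follows. Coercivity and the a priori bound $|\lambda v_{\lambda}|\leq C(p,\omega)$ give that $v_{\lambda}$ is Lipschitz in $x$ with a constant $L(p,\omega)$ independent of $\lambda$. By Rademacher's theorem, at a.e.\ $x$ the subsolution property yields
\[
-\lambda v_{\lambda}(x,p,\omega)\ =\ H_{1}(p+Dv_{\lambda}(x,p,\omega),x,\omega)\ \geq\ \mathrm{m}_{1}(x,\omega).
\]
Now by the definition of $\overline{\mathrm{m}}_{1}$ as an essential supremum, $\PP[\mathrm{m}_{1}(0,\cdot)>\overline{\mathrm{m}}_{1}-\varepsilon]>0$ for every $\varepsilon>0$; the Birkhoff multiparameter ergodic theorem applied to the stationary field $\mathrm{m}_{1}(\cdot,\omega)$ then implies that, off a $\PP$-null set of $\omega$, the set $E_{\varepsilon}(\omega):=\{x:\mathrm{m}_{1}(x,\omega)>\overline{\mathrm{m}}_{1}-\varepsilon\}$ has positive density, so it meets some ball $B_{R(\omega,\varepsilon)}(0)$ with $R(\omega,\varepsilon)$ independent of $\lambda$ in a set of positive measure.

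Finally, picking $x_{\varepsilon}\in B_{R(\omega,\varepsilon)}(0)\cap E_{\varepsilon}(\omega)$ at which $v_{\lambda}$ is also differentiable, one obtains $-\lambda v_{\lambda}(x_{\varepsilon},p,\omega)>\overline{\mathrm{m}}_{1}-\varepsilon$, and the uniform Lipschitz estimate yields $|\lambda v_{\lambda}(0,p,\omega)-\lambda v_{\lambda}(x_{\varepsilon},p,\omega)|\leq \lambda L(p,\omega)R(\omega,\varepsilon)\to 0$ as $\lambda\to 0$. Hence $\liminf_{\lambda\to 0}-\lambda v_{\lambda}(0,p,\omega)\geq \overline{\mathrm{m}}_{1}-\varepsilon$; letting $\varepsilon\to 0$ and combining with the first-paragraph bounds completes the proof.
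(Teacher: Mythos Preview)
Your argument is correct and follows the same strategy as the paper: comparison with $\check H_1$ and $\hat H_1$ for the first two bounds, and for the third the pointwise inequality $H_1(\cdot,x,\omega)\geq \mathrm m_1(x,\omega)$ combined with an ergodic/Lipschitz argument to reach $\overline{\mathrm m}_1$. The paper compresses this last step into a citation of Lemma~25 in \cite{Gao CVPDE} together with the identification $\esssup_{(y,\omega)}\min_q H_1(q,y,\omega)=\overline{\mathrm m}_1$, whereas you carry out both the convex-geometric verification of the pointwise bound (via the chord argument in the super-level set of $\hat H_1$) and the transfer to the origin explicitly.
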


\begin{proof}
	For any $(p,\lambda,\omega,i)\in\RR^{d}\times(0,\infty)\times\Omega\times \left\lbrace 1,2\right\rbrace $, let $\ddot{v}_{\lambda}(x,p,\omega)$ be from (\ref{the ergodic problems of decomposed Hamilton-Jacobi equations in base case}).
	Since $H_{1}(p,x,\omega) \geq \ddot{H}_{1}(p,x,\omega)$, the comparison principle indicates that
	\begin{equation*}
	\liminf_{\lambda\rightarrow 0}-\lambda v_{\lambda}(0,p,\omega) \geq \liminf_{\lambda\rightarrow 0}-\lambda \ddot{v}_{\lambda}(0,p,\omega) = \overline{\ddot{H}_{1}}(p)
	\end{equation*}
	Finally, it is well-known that (see the Lemma 25 in \cite{Gao CVPDE} for a similar proof)
	\begin{eqnarray*}
	\liminf_{\lambda\rightarrow 0}-\lambda v_{\lambda}(0,p,\omega) \geq \min_{q\in\RR^{d}}\esssup\limits\limits_{(y,\omega)\in\RR^{d}\times\Omega}H_{1}(q,y,\omega) = \overline{\mathrm{m}}_{1}, && x\in\RR^{d}
	\end{eqnarray*}
\end{proof}

\begin{lemma}\label{effetive Hamiltonian on the convex part base case}
	Let $\ell = 1$ and the assumptions of (A1) - (A4) be in force, then the Hamiltonian $H_{1}(p,x,\omega)$ is regularly homogenizable at any $p\in\overline{\left\lbrace q\in\RR^{d} \big | \overline{\check{H}_{1}}(q) > \overline{\mathrm{m}}_{1}\right\rbrace}$. Moreover, $\overline{H_{1}}(p) = \overline{\check{H}_{1}}(p)$ for such $p$.
\end{lemma}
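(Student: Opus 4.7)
The plan is to establish the matching upper bound $\limsup_{\lambda \to 0}-\lambda v_\lambda(0,p,\omega) \leq \overline{\check{H}_1}(p)$ on the open set $\mathcal{U} := \{q \in \RR^d : \overline{\check{H}_1}(q) > \overline{\mathrm{m}}_1\}$, combine it with Lemma \ref{the lower bound of the effective Hamiltonian at the base case}, and then extend to $\overline{\mathcal{U}}$ using Corollary \ref{the closedness of reguarly homogenizable points}. The idea is to construct, for each $p \in \mathcal{U}$, a subsolution of the $H_1$-equation whose value approximates $-\overline{\check{H}_1}(p)/\lambda$ uniformly on a large ball, and then invoke the comparison principle from Proposition \ref{the comparison principle for the ergodic problem in a ball}.

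Fix $p \in \mathcal{U}$ and choose $\epsilon > 0$ with $\overline{\check{H}_1}(p) - \epsilon/3 > \overline{\mathrm{m}}_1$. Since $\min_q \check{H}_1(q,x,\omega) \leq \mathrm{m}_1(x,\omega)$ pointwise (directly from the definition of the contact value) and since $\min_p\overline{\check{H}_1}(p) = \esssup \min_q \check{H}_1(q,x,\omega)$, we have $\overline{\check{H}_1}(p) > \overline{\mathrm{m}}_1 \geq \min_p\overline{\check{H}_1}$, so Lemma \ref{subsolutions with constraint supergradient level set convex} applies to the quasiconvex $\check{H}_1$ at $p$. It produces, on a full-probability event and for all sufficiently small $\lambda$, a subsolution $w = w_\lambda^{\epsilon,R}(\cdot,p,\omega)$ of $\lambda u + \check{H}_1(p+Du,x,\omega) = 0$ on $B_{R/\lambda}(0)$ satisfying $|\lambda w + \overline{\check{H}_1}(p)| < \epsilon$ and $p + D^+ w(x) \subseteq \{q : \check{H}_1(q,x,\omega) > \overline{\check{H}_1}(p) - \epsilon/3\}$ for every $x \in B_{R/\lambda}(0)$.

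The pivotal step is then to upgrade $w$ into a subsolution of the full $H_1$-equation. Here the stable pair assumption (A4) enters: for each $(x,\omega)$, the contact region $\Delta(x,\omega) = \{q : \hat{H}_1(q,x,\omega) \geq \check{H}_1(q,x,\omega)\}$ is bounded (since $\hat{H}_1$ is bounded above by (A2) while $\check{H}_1$ is coercive), and the identity $\Delta(x,\omega) = \{\check{H}_1(\cdot,x,\omega) \leq \mathrm{m}_1(x,\omega)\}$ holds. The nontrivial inclusion follows from a one-line line-segment argument: given $q \in \Delta(x,\omega)$, any line through $q$ exits $\Delta(x,\omega)$ on both sides at points of $\partial\Delta(x,\omega)$, where $\check{H}_1 = \mathrm{m}_1$ by stability, and quasiconvexity on that segment yields $\check{H}_1(q,x,\omega) \leq \mathrm{m}_1(x,\omega)$. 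Consequently $\{\check{H}_1 > \mathrm{m}_1\} \subseteq \Delta(x,\omega)^c$, where $\hat{H}_1 < \check{H}_1$ and hence $H_1 = \check{H}_1$. Since every $q \in p + D^+ w(x)$ satisfies $\check{H}_1(q,x,\omega) > \overline{\mathrm{m}}_1 \geq \mathrm{m}_1(x,\omega)$ by the choice of $\epsilon$, we obtain $\lambda w + H_1(p + Dw,x,\omega) = \lambda w + \check{H}_1(p + Dw,x,\omega) \leq 0$ in the viscosity sense on $B_{R/\lambda}(0)$.

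The comparison principle of Proposition \ref{the comparison principle for the ergodic problem in a ball}, applied between $w$ and the solution $v_\lambda(\cdot,p,\omega)$ of the $H_1$-equation, then yields $\lambda w(0,p,\omega) - \lambda v_\lambda(0,p,\omega) \leq C/R$, whence $-\lambda v_\lambda(0,p,\omega) \leq \overline{\check{H}_1}(p) + \epsilon + C/R$. Letting $\lambda \to 0$, then $R \to \infty$, and finally $\epsilon \to 0$, combined with the lower bound from Lemma \ref{the lower bound of the effective Hamiltonian at the base case}, establishes regularly homogenizability on $\mathcal{U}$ with $\overline{H_1}(p) = \overline{\check{H}_1}(p)$. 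Corollary \ref{the closedness of reguarly homogenizable points} together with the continuity of $\overline{\check{H}_1}$ then extends this to $\overline{\mathcal{U}}$. The main obstacle is precisely the geometric identity $\Delta(x,\omega) = \{\check{H}_1 \leq \mathrm{m}_1\}$ extracted from the stable-pair assumption; without it, the superdifferential of $w$ could not be certified to lie in the region $\{H_1 = \check{H}_1\}$, and the passage from a subsolution of the quasiconvex component to a subsolution of the full nonconvex equation would break down.
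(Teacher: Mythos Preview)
Your proof is correct and follows essentially the same approach as the paper: apply Lemma \ref{subsolutions with constraint supergradient level set convex} to $\check{H}_1$, use the constraint on $D^+w$ together with the stable-pair structure to upgrade $w$ to a subsolution of the full $H_1$-equation, compare via Proposition \ref{the comparison principle for the ergodic problem in a ball}, and close up with Lemma \ref{the lower bound of the effective Hamiltonian at the base case} and Corollary \ref{the closedness of reguarly homogenizable points}. You actually supply more detail than the paper does on the two points the paper leaves implicit---the verification that $\overline{\check{H}_1}(p) > \min_q \overline{\check{H}_1}(q)$ (needed to invoke Lemma \ref{subsolutions with constraint supergradient level set convex}) and the geometric reason, via (A4), that $\{\check{H}_1(\cdot,x,\omega) > \mathrm{m}_1(x,\omega)\} \subseteq \{H_1 = \check{H}_1\}$; note only the inclusion $\Delta \subseteq \{\check{H}_1 \le \mathrm{m}_1\}$ is needed, and that is what your line-segment argument proves.
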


\begin{proof}
	Fix any $p_{0}\in\RR^{d}$, such that $\overline{\check{H}_{1}}(p_{0}) > \overline{\mathrm{m}}_{1}$. Let us set $\epsilon_{0} = 3\left( \overline{\check{H}_{1}}(p_{0}) - \overline{\mathrm{m}}_{1}\right)$ and apply the Lemma \ref{subsolutions with constraint supergradient level set convex} to the Hamiltonian $\check{H}_{1}(p,x,\omega)$ at $p_{0}$ with the above choice of $\epsilon_{0} > 0$. There exists an event $\widetilde{\Omega}\subseteq \Omega$ with $\PP\left[ \widetilde{\Omega}\right] = 1$. Such that for any $(\epsilon,R,\omega)\in(0,\epsilon_{0})\times(0,\infty)\times\widetilde{\Omega}$, there exists $\lambda_{0} = \lambda_{0}(p_{0},\epsilon,R,\omega) > 0$, such that if $0 < \lambda < \lambda_{0}$, then the following (i) - (iii) hold.
	\begin{enumerate}
		\item [(i)] There is a subsolution $w_{\lambda}^{\epsilon,R}(x,p_{0},\omega)$ of the following equation.
		\begin{eqnarray*}
		\lambda u + \check{H}_{1}(p_{0} + Du,x,\omega) = 0, && x\in B_{\frac{R}{\lambda}}(0)
		\end{eqnarray*}
		\item [(ii)] $-\lambda w_{\lambda}^{\epsilon,R}(x,p_{0},\omega)$ stays in the $\epsilon$-neighborhood of $\overline{\check{H}_{1}}(p_{0})$, uniformly in $B_{\frac{R}{\lambda}}(0)$.
		\begin{equation*}
		\max_{|x|\leq\frac{R}{\lambda}} \left| \lambda w_{\lambda}^{\epsilon,R}(x,p_{0},\omega) + \overline{\check{H}_{1}}(p_{0})\right| < \epsilon 
		\end{equation*}
		\item [(iii)]  Fix any $x\in B_{\frac{R}{\lambda}}(0)$, then
		\begin{equation*}
		p_{0} + D^{+}w_{\lambda}^{\epsilon,R}(x,p_{0},\omega) \subseteq \left\lbrace q\in\RR^{d} \Big| \check{H}_{1}(q,x,\omega) > \overline{\check{H}_{1}}(p_{0}) - \frac{\epsilon}{3}\right\rbrace 
		\end{equation*}		
	\end{enumerate}
	The choice of $\lambda_{0}$ in the Lemma \ref{subsolutions with constraint supergradient level set convex}, combined with above (i) - (iii), implies that
	\begin{eqnarray*}
	\lambda w_{\lambda}^{\epsilon,R} + H_{1}(p_{0} + Dw_{\lambda}^{\epsilon,R},x,\omega) \leq 0, && x\in B_{\frac{R}{\lambda}}(0)
 	\end{eqnarray*}
	The Proposition \ref{the comparison principle for the ergodic problem in a ball} applied to above $w_{\lambda}^{\epsilon,R}(x,p_{0},\omega)$ and $v_{\lambda}(x,p_{0},\omega)$ in (\ref{the ergodic problem of Hamilton-Jacobi equation in base case}) shows the existence of a constant $C = C(p_{0}) > 0$, such that
	\begin{equation*}
	\lambda w_{\lambda}^{\epsilon,R}(0,p_{0},\omega) - \lambda v_{\lambda}(0,p_{0},\omega) \leq \frac{C}{R}
	\end{equation*}
	then (by recalling (ii))
	\begin{eqnarray*}
	\limsup_{\lambda\rightarrow 0}-\lambda v_{\lambda}(0,p_{0},\omega) \leq \frac{C}{R} + \limsup_{\lambda\rightarrow 0} -\lambda w_{\lambda}^{\epsilon,R}(0,p_{0},\omega) \leq \overline{\check{H}_{1}}(p_{0}) + \frac{C}{R} + \epsilon
	\end{eqnarray*}
	Let $(\epsilon,R)\rightarrow (0,\infty)$, we get $\limsup\limits\limits_{\lambda\rightarrow 0}-\lambda v_{\lambda}(0,p_{0},\omega) \leq \overline{\check{H}_{1}}(p_{0})$. The opposite inequality follows from the Lemma \ref{the lower bound of the effective Hamiltonian at the base case}. Therefore, $H_{1}(p,x,\omega)$ is regularly homogenizable at $p_{0}$ and $\overline{H_{1}}(p_{0}) = \overline{\check{H}_{1}}(p_{0})$. Lastly, the closedness is from the Corollary \ref{the closedness of reguarly homogenizable points}. 
\end{proof}

\begin{lemma}\label{effetive Hamiltonian on the concave part base case}
	Let $\ell = 1$ and the assumptions of (A1) - (A4) be in force, then the Hamiltonian $H_{1}(p,x,\omega)$ is regularly homogenizable at any $p\in\overline{\left\lbrace q\in\RR^{d} \big | \overline{\hat{H}_{1}}(q) > \overline{\mathrm{m}}_{1}\right\rbrace}$. Moreover, $\overline{H_{1}}(p) = \overline{\hat{H}_{1}}(p)$ for such $p$.
\end{lemma}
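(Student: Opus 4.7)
The plan is to mimic the proof of Lemma \ref{effetive Hamiltonian on the convex part base case} with the roles of $\check{H}_{1}$ and $\hat{H}_{1}$ interchanged; crucially, no auxiliary construction is needed here, because the solution $\hat{v}_{\lambda}(x,p_{0},\omega)$ of the decomposed quasiconcave equation (\ref{the ergodic problems of decomposed Hamilton-Jacobi equations in base case}) will itself serve as the approximating subsolution of (\ref{the ergodic problem of Hamilton-Jacobi equation in base case}). I would fix $p_{0}\in\RR^{d}$ with $\overline{\hat{H}_{1}}(p_{0}) > \overline{\mathrm{m}}_{1}$ and set $\epsilon_{0} := \overline{\hat{H}_{1}}(p_{0}) - \overline{\mathrm{m}}_{1} > 0$.

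The core step is to invoke Lemma \ref{subsolutions with constraint supergradient level set concave} applied to $\hat{H}_{1}$ at $p_{0}$: it produces an event $\widetilde{\Omega}$ with $\PP[\widetilde{\Omega}] = 1$ such that, for every $(\epsilon,R,\omega)\in (0,1) \times (0,\infty) \times \widetilde{\Omega}$, there exists $\lambda_{0} = \lambda_{0}(p_{0},\epsilon,R,\omega) > 0$ with
\begin{equation*}
p_{0} + D^{+}\hat{v}_{\lambda}(x,p_{0},\omega) \subseteq \left\{q\in\RR^{d}\,\big|\,\hat{H}_{1}(q,x,\omega) > \overline{\hat{H}_{1}}(p_{0}) - \epsilon\right\}, \qquad x\in B_{R/\lambda}(0),
\end{equation*}
for every $0 < \lambda < \lambda_{0}$. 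Whenever $\epsilon < \min(1,\epsilon_{0})$, the threshold $\overline{\hat{H}_{1}}(p_{0}) - \epsilon$ exceeds $\overline{\mathrm{m}}_{1} \geq \mathrm{m}_{1}(x,\omega)$ for a.e. $(x,\omega)$; hence, using the stable pair assumption (A4) together with the quasiconcavity of $\hat{H}_{1}(\cdot,x,\omega)$, the inclusion
\begin{equation*}
\{q \in \RR^{d} : \hat{H}_{1}(q,x,\omega) > \mathrm{m}_{1}(x,\omega)\} \subseteq \{q \in \RR^{d} : \hat{H}_{1}(q,x,\omega) \geq \check{H}_{1}(q,x,\omega)\}
\end{equation*}
places the superdifferential of $\hat{v}_{\lambda}$ inside the region where $H_{1}(\cdot,x,\omega) = \hat{H}_{1}(\cdot,x,\omega)$. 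Consequently $\hat{v}_{\lambda}$ is a viscosity subsolution of $\lambda u + H_{1}(p_{0}+Du,x,\omega) = 0$ on $B_{R/\lambda}(0)$.

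The remainder is routine. Regular homogenizability of $\hat{H}_{1}$ gives $|\lambda\hat{v}_{\lambda}(0,p_{0},\omega) + \overline{\hat{H}_{1}}(p_{0})| < \epsilon$ for all sufficiently small $\lambda$, and Proposition \ref{the comparison principle for the ergodic problem in a ball} applied to $\hat{v}_{\lambda}$ and the solution $v_{\lambda}$ of (\ref{the ergodic problem of Hamilton-Jacobi equation in base case}) produces a constant $C = C(p_{0})$ with $\lambda\hat{v}_{\lambda}(0,p_{0},\omega) - \lambda v_{\lambda}(0,p_{0},\omega) \leq C/R$. Combining these yields
\begin{equation*}
\limsup_{\lambda\rightarrow 0} -\lambda v_{\lambda}(0,p_{0},\omega) \leq \overline{\hat{H}_{1}}(p_{0}) + \epsilon + \frac{C}{R}.
\end{equation*}
Sending $\epsilon\rightarrow 0$ and $R\rightarrow\infty$, then coupling with the matching lower bound from Lemma \ref{the lower bound of the effective Hamiltonian at the base case}, gives $\overline{H_{1}}(p_{0}) = \overline{\hat{H}_{1}}(p_{0})$ throughout the open set $\{q : \overline{\hat{H}_{1}}(q) > \overline{\mathrm{m}}_{1}\}$. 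The extension to its closure is immediate from Corollary \ref{the closedness of reguarly homogenizable points} together with continuity of $\overline{\hat{H}_{1}}$.

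The hardest point in this plan is justifying the geometric inclusion above. Writing $\Delta(x,\omega) := \{q \in \RR^{d} : \hat{H}_{1}(q,x,\omega) \geq \check{H}_{1}(q,x,\omega)\}$, the identities $\hat{H}_{1}|_{\partial\Delta} = \check{H}_{1}|_{\partial\Delta} = \mathrm{m}_{1}(x,\omega)$, the strict inequality $\check{H}_{1} > \mathrm{m}_{1}$ on $\Delta^{c}$, and the convexity of the relevant sub/superlevel sets of $\check{H}_{1}$ and $\hat{H}_{1}$ should together exclude every $\Delta^{c}$ point from the set $\{\hat{H}_{1} > \mathrm{m}_{1}\}$. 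This is exactly the type of level-set equivalence that the forthcoming Lemma \ref{the equivalence of minimum level sets for convex and concave Hamiltonians} is designed to package, so in the final write-up that lemma can simply be quoted at this step.
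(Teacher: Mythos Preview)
Your argument is essentially the paper's proof: both invoke Lemma~\ref{subsolutions with constraint supergradient level set concave} to constrain $p_0 + D^{+}\hat{v}_\lambda$, observe that this forces $H_1 = \hat{H}_1$ on the superdifferential so that $\hat{v}_\lambda$ is a subsolution of the $H_1$ equation, and then compare via Proposition~\ref{the comparison principle for the ergodic problem in a ball}, close with Lemma~\ref{the lower bound of the effective Hamiltonian at the base case} and Corollary~\ref{the closedness of reguarly homogenizable points}. The paper states the subsolution step in one line; you spell out the level-set geometry, which is fine, but your suggestion to quote Lemma~\ref{the equivalence of minimum level sets for convex and concave Hamiltonians} at that point is misplaced: that lemma concerns the \emph{effective} Hamiltonians $\overline{\check{H}_1^1}$, $\overline{\hat{H}_1^1}$, not the pointwise inclusion $\{\hat{H}_1(\cdot,x,\omega) > \mathrm{m}_1(x,\omega)\}\subseteq\Delta(x,\omega)$ you need. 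That inclusion follows directly from Definition~\ref{the stable pair and the contact value} and quasiconcavity (the open convex set $\{\hat H_1>\mathrm m_1\}$ cannot meet $\partial\Delta$, on which $\hat H_1=\mathrm m_1$), as you had already sketched. Also, since $\hat{v}_\lambda$ is the exact solution of (\ref{the ergodic problems of decomposed Hamilton-Jacobi equations in base case}), the paper gets $\limsup_{\lambda\to 0}(-\lambda v_\lambda(0))\le \overline{\hat H_1}(p_0)+C/R$ without an extra $+\epsilon$; your version with $+\epsilon$ is harmless but unnecessary.
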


\begin{proof}
	Fix any $p_{0}\in\RR^{d}$, such that $\overline{\hat{H}_{1}}(p_{0}) > \overline{\mathrm{m}}_{1}$. Let us apply the Lemma \ref{subsolutions with constraint supergradient level set concave} to $\hat{H}_{1}(p,x,\omega)$ and $\hat{v}_{\lambda}(x,p_{0},\omega)$ (defined in (\ref{the ergodic problems of decomposed Hamilton-Jacobi equations in base case})) at $p_{0}$. Then there exists an event $\widetilde{\Omega}\subseteq\Omega$ with $\PP\left[\widetilde{\Omega}\right] = 1$. Such that for any $(\epsilon, R,\omega) \in(0,\epsilon_{0})\times (0,\infty) \times \widetilde{\Omega}$, where $\epsilon_{0} := \min\left\lbrace \overline{\hat{H}_{1}}(p_{0}) - \overline{\mathrm{m}}_{1}, 1\right\rbrace $, there exists $\lambda_{0} = \lambda_{0}(p_{0},\epsilon,R,\omega) > 0$, and the following holds (for $0 < \lambda < \lambda_{0}$).
	\begin{eqnarray*}
	p_{0} + D^{+}\hat{v}_{\lambda}(x,p_{0},\omega) \subseteq \left\lbrace q\in\RR^{d}\Big| \hat{H}_{1}(q,x,\omega) > \overline{\hat{H}_{1}}(p_{0}) - \epsilon\right\rbrace, && x\in B_{\frac{R}{\lambda}}(0) 
	\end{eqnarray*}
	which implies that
	\begin{eqnarray*}
	\lambda \hat{v}_{\lambda}(x,p_{0},\omega) + H_{1}(p_{0} + D\hat{v}_{\lambda}(x,p_{0},\omega),x,\omega) \leq 0, && x\in B_{\frac{R}{\lambda}}(0)
	\end{eqnarray*}
	By applying the Proposition \ref{the comparison principle for the ergodic problem in a ball} to $v_{\lambda}(x,p_{0},\omega)$ and $\hat{v}_{\lambda}(x,p_{0},\omega)$, we can find some constant $C = C(p_{0}) > 0$, such that
	\begin{equation*}
	\lambda \hat{v}_{\lambda}(0,p_{0},\omega) - \lambda v_{\lambda}(0,p_{0},\omega) \leq \frac{C}{R}
	\end{equation*}
	then
	\begin{eqnarray*}
	\limsup_{\lambda\rightarrow 0} -\lambda v_{\lambda}(0,p_{0},\omega) \leq \frac{C}{R} + \limsup_{\lambda\rightarrow 0} - \lambda \hat{v}_{\lambda}(0,p_{0},\omega) = \frac{C}{R} + \overline{\hat{H}_{1}}(p_{0})
	\end{eqnarray*}
	By sending $R\rightarrow\infty$, we see that $\limsup\limits\limits_{\lambda\rightarrow 0} - \lambda v_{\lambda}(0,p_{0},\omega) \leq \overline{\hat{H}_{1}}(p_{0})$. The other direction of the inequality comes from the Lemma \ref{the lower bound of the effective Hamiltonian at the base case}. Hence, we proved the regular homogenizability of $H_{1}(p,x,\omega)$ at $p_{0}$ and $\overline{H_{1}}(p_{0}) = \overline{\hat{H}_{1}}(p_{0})$. The Corollary \ref{the closedness of reguarly homogenizable points} establishes the closedness and then the Lemma is justified.
\end{proof}
Next, let us introduce a family of auxiliary Hamiltonians indexed by $\kappa\in[0,1]$.
\begin{equation*}
H_{1}^{\kappa}(p,x,\omega) := H_{1}(p,x,\omega) + \kappa\left( \overline{\mathrm{m}}_{1} - \mathrm{m}_{1}(x,\omega)\right) 
\end{equation*}
\begin{eqnarray*}
\ddot{H}_{1}^{\kappa}(p,x,\omega) := \ddot{H}_{1}(p,x,\omega) + \kappa\left( \overline{\mathrm{m}}_{1} - \mathrm{m}_{1}(x,\omega)\right), && \ddot{} \text{ is either } \check{} \text{ or } \hat{}
\end{eqnarray*}
For any $(p,\lambda, \omega)\in\RR^{d}\times(0,\infty)\times\Omega$, let $v_{\lambda}^{\kappa}(x,p,\omega)$ be the unique solution of the equation as follows.
\begin{eqnarray}\label{the ergodic problem of shifted Hamilton-Jacobi equation in base case}
\lambda v_{\lambda}^{\kappa} + H_{1}^{\kappa}(p + Dv_{\lambda}^{\kappa},x,\omega) = 0, && x\in\RR^{d}
\end{eqnarray}
Similarly, let $\ddot{v}_{\lambda}^{\kappa}(x,p,\omega)$ be the unique solution of the equation below.
\begin{eqnarray}\label{the ergodic problem of shifted convex or concave Hamilton-Jacobi equations in base case}
\lambda \ddot{v}_{\lambda}^{\kappa} + \ddot{H}_{1}^{\kappa}(p + D\ddot{v}_{\lambda}^{\kappa},x,\omega) = 0, && x\in\RR^{d}
\end{eqnarray}

\begin{lemma}\label{the equivalence of minimum level sets for convex and concave Hamiltonians}
	Let $\ell = 1$, (\ref{empty interior for a level set}) and the assumptions of (A1) - (A3) be in force, then
	\begin{eqnarray*}
	\partial\overline{\left\lbrace p\in\RR^{d}\Big| \overline{\check{H}_{1}^{1}}(p) > \overline{\mathrm{m}}_{1}\right\rbrace} = 
	\partial\overline{\left\lbrace p\in\RR^{d}\Big|\overline{\hat{H}_{1}^{1}}(p) > \overline{\mathrm{m}}_{1}\right\rbrace }
	\end{eqnarray*}
\end{lemma}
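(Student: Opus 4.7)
The plan is to combine the inf--sup representation of the quasiconvex effective Hamiltonian (recalled in the proof of Lemma~\ref{the symmetry of effective Hamiltonians in evenness}) with its dual sup--inf representation for the quasiconcave effective Hamiltonian (derived from Lemma~\ref{the symmetry between Hamiltonian and its dual in evenness and negativity}), together with the observation that the shift $\overline{\mathrm{m}}_{1}-\mathrm{m}_{1}(x,\omega)$ is calibrated so that, for every $(x,\omega)$, the modified Hamiltonians $\check{H}_{1}^{1}$ and $\hat{H}_{1}^{1}$ both take the uniform value $\overline{\mathrm{m}}_{1}$ on $\partial\Delta(x,\omega)$, where $\Delta(x,\omega):=\{p:\hat{H}_{1}(p,x,\omega)\geq\check{H}_{1}(p,x,\omega)\}$. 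In addition, $\check{H}_{1}^{1}\leq\overline{\mathrm{m}}_{1}\leq\hat{H}_{1}^{1}$ inside $\Delta(x,\omega)$, while $\check{H}_{1}^{1}>\overline{\mathrm{m}}_{1}$ outside.

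First I would establish the inclusion $\{p:\overline{\check{H}_{1}^{1}}(p)\leq\overline{\mathrm{m}}_{1}\}\subseteq\{p:\overline{\hat{H}_{1}^{1}}(p)\geq\overline{\mathrm{m}}_{1}\}$. Given $p_{0}$ in the left-hand set, the inf--sup formula provides, for each $\epsilon>0$, a sublinear corrector $\phi_{\epsilon}$ satisfying $\check{H}_{1}^{1}(p_{0}+D\phi_{\epsilon},x,\omega)\leq\overline{\mathrm{m}}_{1}+\epsilon$ essentially, so the modulus (A3) forces $p_{0}+D\phi_{\epsilon}(x)$ within an $o_{\epsilon}(1)$-neighborhood of $\Delta(x,\omega)$; since $\hat{H}_{1}^{1}\geq\overline{\mathrm{m}}_{1}$ throughout $\Delta(x,\omega)$, uniform continuity yields $\hat{H}_{1}^{1}(p_{0}+D\phi_{\epsilon},x,\omega)\geq\overline{\mathrm{m}}_{1}-o_{\epsilon}(1)$ essentially; feeding this into the sup--inf representation and letting $\epsilon\downarrow 0$ yields $\overline{\hat{H}_{1}^{1}}(p_{0})\geq\overline{\mathrm{m}}_{1}$.

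For the reverse inclusion $\{p:\overline{\hat{H}_{1}^{1}}(p)\geq\overline{\mathrm{m}}_{1}\}\subseteq\{p:\overline{\check{H}_{1}^{1}}(p)\leq\overline{\mathrm{m}}_{1}\}$, I would pursue a symmetric dual argument: any sup--inf corrector $\psi_{\epsilon}$ for $\hat{H}_{1}^{1}$ at level $\overline{\mathrm{m}}_{1}-\epsilon$ keeps $p_{0}+D\psi_{\epsilon}(x)$ in the superlevel set $\{\hat{H}_{1}(\cdot,x,\omega)\geq\mathrm{m}_{1}(x,\omega)-\epsilon\}$, and assumption~(\ref{empty interior for a level set}) combined with the stable pair identity $\check{H}_{1}=\hat{H}_{1}=\mathrm{m}_{1}$ on $\partial\Delta(x,\omega)$ forces this superlevel set to coincide, up to an $o_{\epsilon}(1)$ boundary layer, with $\Delta(x,\omega)$ in the sense relevant for the inf--sup problem. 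Hence $\check{H}_{1}^{1}(p_{0}+D\psi_{\epsilon},x,\omega)\leq\overline{\mathrm{m}}_{1}+o_{\epsilon}(1)$ essentially, yielding $\overline{\check{H}_{1}^{1}}(p_{0})\leq\overline{\mathrm{m}}_{1}$. The two inclusions show that the closed convex sets $A^{c}=\{\overline{\check{H}_{1}^{1}}\leq\overline{\mathrm{m}}_{1}\}$ and $\overline{B}=\{\overline{\hat{H}_{1}^{1}}\geq\overline{\mathrm{m}}_{1}\}$ coincide, whence $\partial\overline{A}=\partial A^{c}=\partial\overline{B}$.

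The main obstacle is this reverse inclusion, specifically the effective-level reduction $\{\hat{H}_{1}\geq\mathrm{m}_{1}\}\approx\Delta(x,\omega)$: a priori the superlevel set $\{\hat{H}_{1}(\cdot,x,\omega)\geq\mathrm{m}_{1}(x,\omega)\}$ may strictly contain $\Delta(x,\omega)$ in higher dimensions, and the argument must leverage both assumption~(\ref{empty interior for a level set}), which rules out thick flat pieces of $\check{H}_{1}$ and $\hat{H}_{1}$ at level $\mathrm{m}_{1}$, and the coercivity--continuity pair (A2)--(A3) restricted to the bounded $p$-range dictated by the optimal correctors, so as to conclude that the overshoot $\{\hat{H}_{1}\geq\mathrm{m}_{1}\}\setminus\Delta(x,\omega)$ does not contribute to the effective sup--inf/inf--sup values at level $\overline{\mathrm{m}}_{1}$.
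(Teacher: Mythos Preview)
Your corrector approach via the inf--sup/sup--inf formulas is a genuine alternative to the paper's proof, but both routes rest on the \emph{same} pointwise fact, which you correctly isolate as the obstacle: for each $(x,\omega)$,
\[
K(x,\omega):=\bigl\{q:\check{H}_{1}(q,x,\omega)\leq\mathrm{m}_{1}(x,\omega)\bigr\}
\;=\;
L(x,\omega):=\bigl\{q:\hat{H}_{1}(q,x,\omega)\geq\mathrm{m}_{1}(x,\omega)\bigr\}.
\]
The paper bypasses your $\phi_{\epsilon},\psi_{\epsilon}$ bookkeeping entirely: after reducing (via Lemma~\ref{the symmetry between Hamiltonian and its dual in evenness and negativity}) to comparing $\{\overline{\check{H}_{1}^{1}}=\overline{\mathrm{m}}_{1}\}$ with the minimum level set of the quasiconvex $G_{1}^{1}(p,x,\omega):=-\hat{H}_{1}^{1}(-p,x,\omega)$, it invokes the metric--problem characterization of these minimum level sets from \cite{Davini and Siconolfi MA,Davini and Siconolfi CVPDE,Armstrong and Souganidis IMRN}. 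The two eikonal constraints $\check{H}_{1}^{1}(Du,\cdot)\leq\overline{\mathrm{m}}_{1}$ and $G_{1}^{1}(-Dv,\cdot)\leq-\overline{\mathrm{m}}_{1}$ are precisely $Du\in K$ and $Dv\in L$, so once $K\equiv L$ the maximal subsolutions coincide and the lemma follows in one line. Your route would also succeed once $K=L$ is in hand; the paper's is simply shorter because the metric machinery absorbs all the $\epsilon$--management.

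Regarding the obstacle itself: your heuristic that the overshoot $L\setminus\Delta$ ``does not contribute'' is vaguer than necessary --- in fact $K=L$ holds \emph{setwise}, and the paper hides this behind ``It is clear that $U\equiv V$''. Here is the missing convex--analysis step. From (A4) one has $K\subseteq\Delta$ and $\partial\Delta\subseteq\partial K$; the open set $\Delta\setminus K$ then has its boundary contained in $\partial K$, hence is a union of connected components of the unbounded connected set $K^{c}$, forcing $\Delta=K$ since $\Delta$ is bounded. Consequently $\partial K=\partial\Delta\subseteq\{\hat{H}_{1}=\mathrm{m}_{1}\}=\partial L$ by (\ref{empty interior for a level set}), and $K=\mathrm{conv}(\partial K)\subseteq L$. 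If $K\subsetneq L$, pick $p\in L\setminus K$ and $q\in\mathrm{int}(K)\subseteq\mathrm{int}(L)$; the open segment $(q,p)$ lies in $\mathrm{int}(L)$ yet crosses $\partial K\subseteq\partial L$, a contradiction. (The degenerate case $\mathrm{int}(K)=\emptyset$ is handled similarly using $\Delta=K$.) With $K=L$ established, both your inclusions go through and so does the paper's ``$U\equiv V$''.
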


\begin{proof}
	The statement (\ref{empty interior for a level set}) is equivalent to that
	\begin{eqnarray*}
	\text{int} \left\lbrace p\in\RR^{d}\Big|\ddot{H}_{1}^{1}(p,x,\omega) = \overline{\mathrm{m}}_{1}\right\rbrace = \emptyset, && \ddot{} \text{ is either } \check{} \text{ or } \hat{}
	\end{eqnarray*}
	The Lemma \ref{the symmetry between Hamiltonian and its dual in evenness and negativity} and the Lemma \ref{the lower bound of the effective Hamiltonian at the base case} imply that $\left\lbrace p\in\RR^{d}\Big|\overline{\ddot{H}_{1}^{1}}(p) > \overline{\mathrm{m}}_{1}\right\rbrace\neq \emptyset$. Moreover, the continuity of $\overline{\ddot{H}_{1}^{1}}(p)$ justifies the equivalence as below.
	\begin{eqnarray*}
	\partial\overline{\left\lbrace p\in\RR^{d}\Big|\overline{\ddot{H}_{1}^{1}}(p) > \overline{\mathrm{m}}_{1}\right\rbrace} = \left\lbrace p\in\RR^{d}\Big|\overline{\ddot{H}_{1}^{1}}(p) = \overline{\mathrm{m}}_{1} \right\rbrace, && \ddot{} \text{ is either } \check{} \text{ or } \hat{}
	\end{eqnarray*}
	To prove the Lemma, we only need to show that
	\begin{eqnarray*}
	\left\lbrace p\in\RR^{d}\Big|\overline{\check{H}_{1}^{1}}(p) = \overline{\mathrm{m}}_{1} \right\rbrace = \left\lbrace p\in\RR^{d}\Big|\overline{\hat{H}_{1}^{1}}(p) = \overline{\mathrm{m}}_{1} \right\rbrace
	\end{eqnarray*}
	By the Lemma \ref{the symmetry between Hamiltonian and its dual in evenness and negativity}, it suffices to prove the following, where $G_{1}^{1}(p,x,\omega) := -\hat{H}_{1}^{1}(-p,x,\omega)$.
	\begin{eqnarray*}
	A := \left\lbrace p\in\RR^{d}\Big|\overline{\check{H}_{1}^{1}}(p) = \overline{\mathrm{m}}_{1} \right\rbrace = B:= \left\lbrace p\in\RR^{d}\Big|\overline{G_{1}^{1}}(-p) = -\overline{\mathrm{m}}_{1} \right\rbrace
	\end{eqnarray*}
	Based on \cite{Davini and Siconolfi MA, Davini and Siconolfi CVPDE, Armstrong and Souganidis IMRN}, the set $A$ is characterized by the maximum subsolutions $U(\cdot,x,\omega)$, $(x,\omega)\in\RR^{d}\times\Omega$, of the following metric problems.
	\begin{equation*}
	\begin{cases}
	\check{H}_{1}^{1}(Du(y,x,\omega),y,\omega) \leq \overline{\mathrm{m}}_{1}, & y\in\RR^{d}\diagdown\left\lbrace x\right\rbrace\\
	u(x,x,\omega) = 0
	\end{cases}
	\end{equation*}
	Similarly, the set $B$ is fully determined by the maximum subsolutions $V(\cdot,x,\omega)$, $(x,\omega)\in\RR^{d}\times\Omega$, of the equations below.
	\begin{equation*}
	\begin{cases}
	G_{1}^{1}(-Dv(y,x,\omega),y,\omega) \leq -\overline{\mathrm{m}}_{1}, & y\in\RR^{d}\diagdown\left\lbrace x\right\rbrace\\
	v(x,x,\omega) = 0
	\end{cases}
	\end{equation*}	
	It is clear that $U \equiv V$, as a result of this, $A = B$.
\end{proof}

\begin{lemma}\label{the upper bound for the flat piece the base case}
	Let $\ell = 1$, (\ref{empty interior for a level set}) and the assumptions of (A1) - (A3) be in force, then
	\begin{eqnarray*}
	\limsup_{\lambda\rightarrow 0}-\lambda v_{\lambda}(0,p,\omega) \leq \overline{\mathrm{m}}_{1}, && p \in \left( \bigcup_{\ddot{} \text{ is either } \check{} \text{ or } \hat{}}\left\lbrace q\in\RR^{d} \big | \overline{\ddot{H}_{1}}(q) > \overline{\mathrm{m}}_{1}\right\rbrace\right)^{c}
	\end{eqnarray*}
\end{lemma}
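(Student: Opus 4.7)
The strategy is to construct, for every $\epsilon > 0$ and $R > 0$, a subsolution $w_\lambda^{\epsilon, R}$ of $\lambda u + H_1(p + Du, x, \omega) \leq 0$ on $B_{R/\lambda}(0)$ satisfying $\max_{|x| \leq R/\lambda} | \lambda w_\lambda^{\epsilon, R} + \overline{\mathrm{m}}_1 | < \epsilon$, then invoke Proposition \ref{the comparison principle for the ergodic problem in a ball} to transfer the bound to $v_\lambda$, and finally send $\lambda \to 0$ followed by $(\epsilon, R) \to (0, \infty)$. Together with the matching lower bound in Lemma \ref{the lower bound of the effective Hamiltonian at the base case}, this yields the conclusion.

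The first reduction is to replace $H_1$ by the shifted Hamiltonian $H_1^1 := H_1 + (\overline{\mathrm{m}}_1 - \mathrm{m}_1)$, which satisfies $H_1^1 \geq H_1$ pointwise and whose contact value becomes the deterministic constant $\overline{\mathrm{m}}_1$. By the comparison principle $v_\lambda^1 \leq v_\lambda$, so $-\lambda v_\lambda \leq -\lambda v_\lambda^1$; it thus suffices to produce the subsolution for the $H_1^1$-equation. At the level of sublevel sets, $\{\check{H}_1^1 \leq \overline{\mathrm{m}}_1\} = \{\check{H}_1 \leq \mathrm{m}_1\}$ and $\{\hat{H}_1^1 \geq \overline{\mathrm{m}}_1\} = \{\hat{H}_1 \geq \mathrm{m}_1\}$, so the stable-pair geometry is precisely encoded at the common level $\overline{\mathrm{m}}_1$. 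By Lemma \ref{the equivalence of minimum level sets for convex and concave Hamiltonians}, the sets $\{\overline{\check{H}_1^1} > \overline{\mathrm{m}}_1\}$ and $\{\overline{\hat{H}_1^1} > \overline{\mathrm{m}}_1\}$ share a common boundary $\partial L$; for $p \in \partial L$, continuity gives $\overline{\check{H}_1^1}(p) = \overline{\hat{H}_1^1}(p) = \overline{\mathrm{m}}_1$, and applying Lemma \ref{effetive Hamiltonian on the convex part base case} with the substitutions $H_1 \mapsto H_1^1$, $\check{H}_1 \mapsto \check{H}_1^1$ furnishes both the identity $\overline{H_1^1}(p) = \overline{\mathrm{m}}_1$ and the desired subsolution via that lemma's explicit construction.

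The main obstacle is handling $p$ in the interior of the flat region $L$, where no approximation from outside $L$ is available. The plan is to exploit the identity $U \equiv V$ from the proof of Lemma \ref{the equivalence of minimum level sets for convex and concave Hamiltonians}: the maximum subsolution of the metric problem $\check{H}_1^1(Du) \leq \overline{\mathrm{m}}_1$ with $u(x,x,\omega) = 0$ coincides with that of $\hat{H}_1^1(Du) \geq \overline{\mathrm{m}}_1$. Combining this with the stable-pair geometry (which, under the perturbation (\ref{empty interior for a level set}), places both $\{\check{H}_1 \leq \mathrm{m}_1\}$ and $\{\hat{H}_1 \geq \mathrm{m}_1\}$ in tight correspondence across the contact locus) and with the metric-problem characterization of the flat piece from \cite{Davini and Siconolfi MA, Armstrong and Souganidis IMRN}, one produces a Lipschitz function $w$ whose superdifferential $p + D^+w$ lies in $\{\check{H}_1^1 \leq \overline{\mathrm{m}}_1 + \epsilon\} \cap \{\hat{H}_1^1 \leq \overline{\mathrm{m}}_1 + \epsilon\}$ in the viscosity sense, yielding $w_\lambda^{\epsilon, R}$ after proper normalization. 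The delicate point is the simultaneous subsolution property for the convex and concave components at the common contact level; this rests on the dual-symmetry Lemmas \ref{the symmetry between Hamiltonian and its dual in evenness and negativity}, \ref{the symmetry of effective Hamiltonians in evenness} and on the constrained-gradient constructions of Lemmas \ref{subsolutions with constraint supergradient level set convex}--\ref{supersolutions with constraint subgradient level set concave}, which together allow one to treat the quasiconvex and quasiconcave sides on equal footing.
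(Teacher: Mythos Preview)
Your proposal correctly identifies the boundary case $p\in\partial L$ and the passage through the shifted Hamiltonian $H_1^1$, but it contains a genuine gap at the crucial step: the treatment of $p$ in the \emph{interior} of the flat region $L$. You propose to build a Lipschitz $w$ with $p+D^{+}w\subset\{\check H_1^1\le\overline{\mathrm m}_1+\epsilon\}\cap\{\hat H_1^1\le\overline{\mathrm m}_1+\epsilon\}$ by invoking the identity $U\equiv V$ from Lemma~\ref{the equivalence of minimum level sets for convex and concave Hamiltonians} together with Lemmas~\ref{subsolutions with constraint supergradient level set convex}--\ref{supersolutions with constraint subgradient level set concave}. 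This does not work. After the shift to $\kappa=1$ one has, under~(\ref{empty interior for a level set}), $\{\check H_1^1(\cdot,x,\omega)\le\overline{\mathrm m}_1\}=\{\hat H_1^1(\cdot,x,\omega)\ge\overline{\mathrm m}_1\}$ for every $(x,\omega)$; hence the intersection $\{\check H_1^1\le\overline{\mathrm m}_1\}\cap\{\hat H_1^1\le\overline{\mathrm m}_1\}$ is only the \emph{boundary} of that set. The metric identity $U\equiv V$ produces a function whose gradient lies in $\{\check H_1^1\le\overline{\mathrm m}_1\}=\{\hat H_1^1\ge\overline{\mathrm m}_1\}$, which is the wrong inequality for $\hat H_1^1$; it does not force $p+D^{+}w$ onto that boundary. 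Moreover, Lemmas~\ref{subsolutions with constraint supergradient level set convex} and~\ref{subsolutions with constraint supergradient level set concave} explicitly require $\overline H(p_0)>\mu$ (resp.\ the strict inequality on the concave side), and at an interior point of $L$ one has $\overline{\check H_1^1}(p)=\overline{\mathrm m}_1=\min\overline{\check H_1^1}$, so their hypotheses fail.

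The paper circumvents this obstacle by an idea you did not use: rather than jumping to $\kappa=1$, it keeps the whole one-parameter family $\ddot H_1^{\kappa}$, $\kappa\in[0,1]$. Since $\overline{\ddot H_1^{\kappa}}(p)$ is continuous and nondecreasing in $\kappa$, and since (by Lemma~\ref{the equivalence of minimum level sets for convex and concave Hamiltonians}) the set $\{\overline{\ddot H_1^{1}}=\overline{\mathrm m}_1\}$ collapses to $\partial L$, the level sets $\{\overline{\ddot H_1^{\kappa}}=\overline{\mathrm m}_1\}$ sweep out all of $L$ as $\kappa$ ranges over $[0,1]$. Thus for every $p\in L$ there is a $\kappa(p)\in[0,1]$ with $\overline{\ddot H_1^{\kappa(p)}}(p)=\overline{\mathrm m}_1$, placing $p$ on the \emph{boundary} of the $\kappa(p)$-flat piece; one then applies Lemmas~\ref{effetive Hamiltonian on the convex part base case}--\ref{effetive Hamiltonian on the concave part base case} to $H_1^{\kappa(p)}$ and compares with $H_1\le H_1^{\kappa(p)}$. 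This intermediate-value reduction is the missing ingredient in your argument.
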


\begin{proof}
	Since $\ddot{H}_{1}^{\kappa}(\cdot,\cdot,\cdot)$, $\ddot{} \text{ is either } \check{} \text{ or } \hat{}$, is continuous and increasing in $\kappa\in[0,1]$, so is $\overline{\ddot{H}_{1}^{\kappa}}(\cdot)$. By taking the Lemma \ref{the equivalence of minimum level sets for convex and concave Hamiltonians} into account, it gives that
	\begin{equation*}
	\bot := \left( \bigcup_{\ddot{} \text{ is either } \check{} \text{ or } \hat{}}\left\lbrace q\in\RR^{d} \big | \overline{\ddot{H}_{1}}(q) > \overline{\mathrm{m}}_{1}\right\rbrace\right)^{c} = \bigcup_{\substack{\kappa\in[0,1]\\ \ddot{} \text{ is either } \check{} \text{ or } \hat{}}}\left\lbrace q\in\RR^{d} \big | \overline{\ddot{H}_{1}^{\kappa}}(q) = \overline{\mathrm{m}}_{1}\right\rbrace
	\end{equation*}
	Therefore, for any $p \in \bot$, there exists $\ddot{}, \text{ which is either } \check{} \text{ or } \hat{}$, and $\kappa(p)\in[0,1]$, such that $\overline{\ddot{H}_{1}^{\kappa(p)}}(p) = \overline{\mathrm{m}}_{1}$, then for $v_{\lambda}(x,p,\omega)$ and $\ddot{v}_{\lambda}^{\kappa(p)}(x,p,\omega)$, $\lambda > 0$, from (\ref{the ergodic problem of Hamilton-Jacobi equation in base case}) and (\ref{the ergodic problem of shifted convex or concave Hamilton-Jacobi equations in base case}), respectively, we have
	\begin{eqnarray*}
	\limsup_{\lambda\rightarrow 0}-\lambda v_{\lambda}(0,p,\omega) \leq \limsup_{\lambda\rightarrow 0}-\lambda \ddot{v}_{\lambda}^{\kappa(p)}(0,p,\omega) = \overline{\ddot{H}_{1}^{\kappa(p)}}(p) = \overline{\mathrm{m}}_{1}
	\end{eqnarray*}
	where we have applied the Lemma \ref{effetive Hamiltonian on the convex part base case} and the Lemma \ref{effetive Hamiltonian on the concave part base case} to those Hamiltonians with index $\kappa(p)$.
\end{proof}

\subsection{The inductive steps}\label{the inductive steps}

In this section, we prove that if for a particular integer $\ell_{0} \geq 1$, $H_{\ell_{0}}(p,x,\omega)$ defined through (\ref{the derivation of the nonconvex Hamiltonian through minmax formula}) - (\ref{the monotone ordering of Hamiltonians}), under the assumptions (A1) - (A4), is regularly homogenizable for all $p\in\RR^{d}$, then so is $H_{\ell_{0} + 1}(p,x,\omega)$.

\subsubsection{Some preparations}
For any $0 \leq \ell \leq \ell_{0}$, let us denote
\begin{equation}\label{the Hamiltonian at the middle inductive step}
H_{\ell + \frac{1}{2}}(p,x,\omega) := \begin{cases}
\hat{H}_{1}(p,x,\omega) & \ell = 0\\
\min\left\lbrace \hat{H}_{\ell + 1}(p,x,\omega), H_{\ell}(p,x,\omega) \right\rbrace & 1 \leq \ell \leq \ell_{0}
\end{cases} 
\end{equation}
For any $(p,\lambda,\omega)\in\RR^{d}\times(0,\infty)\times\Omega$, let $v_{\lambda,s}(x,p,\omega)$, where $s\in\left\lbrace 1,\frac{3}{2},2,\frac{5}{2},\cdots,\ell_{0}, \ell_{0} + \frac{1}{2}, \ell_{0} + 1\right\rbrace $, be the unique solution of the equation as follows.
\begin{eqnarray}\label{the ergodic problem of Hamilton-Jacobi equation in general inductive steps}
\lambda v_{\lambda,s} + H_{s}(p + Dv_{\lambda,s},x,\omega) = 0, && x\in\RR^{d}
\end{eqnarray} 
Similarly, let $\ddot{v}_{\lambda,i}(x,p,\omega)$, where $\ddot{}$ is either $\check{}$ or $\hat{}$ and $i\in\left\lbrace 1,2,\cdots,\ell_{0}, \ell_{0} + 1\right\rbrace $, be the unique solution to the equation below.
\begin{eqnarray}\label{the ergodic problem of convex or concave Hamilton-Jacobi equations in general inductive steps}
\lambda\ddot{v}_{\lambda,i} + \ddot{H}_{i}(p + D\ddot{v}_{\lambda,i},x,\omega) = 0, && x\in\RR^{d}
\end{eqnarray}

\subsubsection{The inductive step: from $\ell_{0}$ to $\ell_{0} + \frac{1}{2}$}

In this part, let us fix an integer $\ell_{0} > 0$ and denote that 
\begin{enumerate}
	\item [($\text{I}_{\ell_{0}}^{1}$)] Assumptions of (A1) - (A4) and (\ref{the strict monotonicity condition}), $1 \leq \ell \leq \ell_{0} + 1$ , are in force.
	\item [($\text{I}_{\ell_{0}}^{2}$)] $H_{\ell}(p,x,\omega)$, defined through (\ref{the derivation of the nonconvex Hamiltonian through minmax formula}) - (\ref{the monotone ordering of Hamiltonians}) with $1 \leq \ell \leq \ell_{0}$, is regularly homogenizable for all $p\in\RR^{d}$, such that
	\begin{equation*}
	\overline{H_{\ell}} = \max\left\lbrace \overline{\check{H}_{\ell}}, \overline{\mathrm{m}}_{\ell}, \min\left\lbrace \overline{\hat{H}_{\ell}}, \underline{\mathrm{M}}_{\ell}, \cdots, \max\left\lbrace \overline{\check{H}_{1}}, \overline{\mathrm{m}}_{1},\overline{\hat{H}_{1}}, \right\rbrace \cdots \right\rbrace  \right\rbrace 
	\end{equation*}
\end{enumerate}
Let us combine those into (\ref{the combined assumptions at the intermediate inductive step}) as follows.
\begin{equation}\label{the combined assumptions at the intermediate inductive step}
\text{ both } (\text{I}_{\ell_{0}}^{1}) \text{ and } (\text{I}_{\ell_{0}}^{2}) \tag{$\text{I}_{\ell_{0}}$} \text{ hold}
\end{equation}

\begin{proposition}\label{the result at the intermediate inductive step}
	Assume (\ref{the combined assumptions at the intermediate inductive step}), then $H_{\ell_{0}+\frac{1}{2}}(p,x,\omega)$ is also regularly homogenizable for all $p\in\RR^{d}$. Furthermore, the following equality holds.
	\begin{equation*}
	\overline{H_{\ell_{0}+\frac{1}{2}}}(p) = \min\left\lbrace \overline{\hat{H}_{\ell_{0} + 1}}(p), \underline{\mathrm{M}}_{\ell_{0} + 1}, \overline{H_{\ell_{0}}}(p)\right\rbrace 
	\end{equation*}
\end{proposition}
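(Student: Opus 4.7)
The proposition is established by proving matching upper and lower bounds on $-\lambda v_{\lambda,\ell_{0}+\frac{1}{2}}(0,p,\omega)$ as $\lambda\to 0$, after which the Proposition \ref{the simpler version of regularly homogenizability} yields the regular homogenizability and the claimed formula simultaneously.

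For the upper bound $\limsup_{\lambda\to 0}-\lambda v_{\lambda,\ell_{0}+\frac{1}{2}}(0,p,\omega)\leq \min\{\overline{\hat{H}_{\ell_{0}+1}}(p),\underline{\mathrm{M}}_{\ell_{0}+1},\overline{H_{\ell_{0}}}(p)\}$, the bounds by $\overline{\hat{H}_{\ell_{0}+1}}(p)$ and $\overline{H_{\ell_{0}}}(p)$ follow from the comparison principle applied to $H_{\ell_{0}+\frac{1}{2}}\leq \hat{H}_{\ell_{0}+1}$ and $H_{\ell_{0}+\frac{1}{2}}\leq H_{\ell_{0}}$, together with the inductive hypothesis ($\text{I}_{\ell_{0}}^{2}$). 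For the bound by $\underline{\mathrm{M}}_{\ell_{0}+1}$, in analogy with the flat-piece argument in the base case (Lemmas \ref{the equivalence of minimum level sets for convex and concave Hamiltonians} and \ref{the upper bound for the flat piece the base case}), I introduce the family
\begin{equation*}
\hat{H}_{\ell_{0}+1}^{\kappa}(p,x,\omega):=\hat{H}_{\ell_{0}+1}(p,x,\omega)+\kappa\bigl(\mathrm{M}_{\ell_{0}+1}(x,\omega)-\underline{\mathrm{M}}_{\ell_{0}+1}\bigr),\qquad \kappa\in[0,1],
\end{equation*}
and the associated composites $H_{\ell_{0}+\frac{1}{2}}^{\kappa}:=\min\{\hat{H}_{\ell_{0}+1}^{\kappa},H_{\ell_{0}}\}\geq H_{\ell_{0}+\frac{1}{2}}$. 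Each $\hat{H}_{\ell_{0}+1}^{\kappa}$ is still quasiconcave and fits into (A1)--(A4); by a continuity-in-$\kappa$ argument together with an analog of Lemma \ref{the equivalence of minimum level sets for convex and concave Hamiltonians} one can pick $\kappa(p)\in[0,1]$ so that the resulting junction level equals $\underline{\mathrm{M}}_{\ell_{0}+1}$, and combining the Lemma \ref{subsolutions with constraint supergradient level set concave} applied to $\hat{H}_{\ell_{0}+1}^{\kappa(p)}$ with the Proposition \ref{the comparison principle for the ergodic problem in a ball} closes this sub-case.

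For the lower bound $\liminf_{\lambda\to 0}-\lambda v_{\lambda,\ell_{0}+\frac{1}{2}}(0,p,\omega)\geq \min\{\overline{\hat{H}_{\ell_{0}+1}}(p),\underline{\mathrm{M}}_{\ell_{0}+1},\overline{H_{\ell_{0}}}(p)\}$, the key observation is that since $H_{\ell_{0}+\frac{1}{2}}=\min\{\hat{H}_{\ell_{0}+1},H_{\ell_{0}}\}$, being a viscosity supersolution of $\lambda u+H_{\ell_{0}+\frac{1}{2}}(p+Du,x,\omega)\geq 0$ is equivalent to being a simultaneous supersolution of both component equations. I proceed by cases according to which of the three quantities on the right-hand side attains the minimum; in each case, an approximate supersolution is built by combining the solution of the ergodic problem for one of the two components with the constrained-subdifferential tools of Lemmas \ref{supersolutions with constraint subgradient level set convex} and \ref{supersolutions with constraint subgradient level set concave}, so as to pin its subgradient into a region where the other component Hamiltonian also lies above the target value. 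The stable-pairing assumption (A4) is what lets this compatibility be read off the contact geometry of $(\hat{H}_{\ell_{0}+1},\check{H}_{\ell_{0}})$, while the strict monotonicity (\ref{the strict monotonicity condition}) supplies the separation margins needed to absorb the $\epsilon$-buffers coming from those lemmas.

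The main obstacle is the lower bound in the region where $\underline{\mathrm{M}}_{\ell_{0}+1}$ is the smallest of the three quantities. Here one must produce a supersolution of the $H_{\ell_{0}+\frac{1}{2}}$ equation whose subgradient is forced to stay in the intersection of $\{\hat{H}_{\ell_{0}+1}\geq \underline{\mathrm{M}}_{\ell_{0}+1}-\epsilon\}$ and $\{H_{\ell_{0}}\geq \underline{\mathrm{M}}_{\ell_{0}+1}-\epsilon\}$; the compatibility of the two one-sided gradient constraints is precisely where the combined use of (A4) and (\ref{the strict monotonicity condition}) becomes indispensable, since without strict ordering the sought intersection could degenerate. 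Once all cases of the lower bound are handled, the two inequalities combine with Proposition \ref{the simpler version of regularly homogenizability} to deliver both the regular homogenizability of $H_{\ell_{0}+\frac{1}{2}}$ at every $p\in\RR^{d}$ and the stated formula for $\overline{H_{\ell_{0}+\frac{1}{2}}}(p)$.
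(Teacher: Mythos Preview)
Your overall two-bound strategy matches the paper's, and your upper bound for $\overline{\hat{H}_{\ell_{0}+1}}(p)$ and $\overline{H_{\ell_{0}}}(p)$ is the same. However, the $\underline{\mathrm{M}}_{\ell_{0}+1}$ upper bound in the paper (Lemma~\ref{the upper bound of the effective Hamiltonian at the intermediate inductive step}) does \emph{not} use a $\kappa$-family: it simply invokes the general fact $\limsup_{\lambda\to 0}-\lambda v_{\lambda}(0,p,\omega)\le \max_{q}\essinf_{(y,\omega)}H_{\ell_{0}+\frac12}(q,y,\omega)=\underline{\mathrm{M}}_{\ell_{0}+1}$ (as in Lemma~25 of \cite{Gao CVPDE}). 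Your $\kappa$-family plan is unnecessary machinery here.

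The real divergence is in the lower bound. The paper (Lemma~\ref{the lower bound of the effective Hamiltonian at the inductive step}) does \emph{not} build constrained supersolutions case-by-case. Instead it rewrites
\[
H_{\ell_{0}+\frac12}=\max\bigl\{\mathcal{H}_{\ell_{0}+\frac12},\,\hat{H}_{1}\bigr\},\qquad
\mathcal{H}_{\ell_{0}+\frac12}:=\min\bigl\{\hat{H}_{\ell_{0}+1},\max\{\check{H}_{\ell_{0}},\dots,\min\{\hat{H}_{2},\check{H}_{1}\}\dots\}\bigr\},
\]
and then applies the inductive hypothesis $(\text{I}_{\ell_{0}})$ to the \emph{dual} family $\{-\check{H}_{i}(-p,x,\omega)\}_{i=1}^{\ell_{0}}$ (quasiconcave) and $\{-\hat{H}_{j}(-p,x,\omega)\}_{j=2}^{\ell_{0}+1}$ (quasiconvex), using Lemma~\ref{the symmetry between Hamiltonian and its dual in evenness and negativity}. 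This gives an explicit formula for $\overline{\mathcal{H}_{\ell_{0}+\frac12}}$, and comparison yields $\liminf\ge\max\{\overline{\mathcal{H}_{\ell_{0}+\frac12}}(p),\overline{\hat{H}_{1}}(p)\}$. A direct min-max identity using (\ref{the monotonicity condition}) then shows this maximum equals $\min\{\overline{\hat{H}_{\ell_{0}+1}}(p),\underline{\mathrm{M}}_{\ell_{0}+1},\overline{H_{\ell_{0}}}(p)\}$.

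Your lower-bound plan has a genuine gap: Lemmas~\ref{supersolutions with constraint subgradient level set convex} and~\ref{supersolutions with constraint subgradient level set concave} apply only to quasiconvex or quasiconcave Hamiltonians, but $H_{\ell_{0}}$ is neither. So you cannot produce a supersolution of the $H_{\ell_{0}}$ equation with a useful subdifferential constraint via those lemmas, and the inductive hypothesis $(\text{I}_{\ell_{0}}^{2})$ gives you only the value $\overline{H_{\ell_{0}}}(p)$, not gradient information about $v_{\lambda,\ell_{0}}$. Without such control you cannot verify that a supersolution of one component equation is automatically a supersolution of the other. The paper sidesteps this entirely by the duality--reindexing trick above, which reduces everything to a second application of the inductive hypothesis rather than to any new constrained-gradient construction.
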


\begin{proof}
	Note that that the assumption (\ref{the combined assumptions at the intermediate inductive step}) with $\ell_{0} = 1$ is basically the result of the Proposition \ref{the result at the base case}. Based on the Definition \ref{the definition of regularly homogenizable} and the Proposition \ref{the simpler version of regularly homogenizability}, this proposition follows from the Lemma \ref{the upper bound of the effective Hamiltonian at the intermediate inductive step} and the Lemma \ref{the lower bound of the effective Hamiltonian at the inductive step}.
\end{proof}

\begin{lemma}\label{the upper bound of the effective Hamiltonian at the intermediate inductive step}
	Assume (\ref{the combined assumptions at the intermediate inductive step}), let $v_{\lambda,\ell_{0}+\frac{1}{2}}(x,p_{0},\omega)$ be from (\ref{the ergodic problem of Hamilton-Jacobi equation in general inductive steps}), we have that
	\begin{equation*}
	\limsup_{\lambda\rightarrow 0}-\lambda v_{\lambda,\ell_{0}+\frac{1}{2}}(0,p,\omega) \leq \min\left\lbrace \overline{\hat{H}_{\ell_{0} + 1}}(p), \underline{\mathrm{M}}_{\ell_{0} + 1}, \overline{H_{\ell_{0}}}(p)\right\rbrace 
	\end{equation*}
\end{lemma}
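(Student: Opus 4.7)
The plan is to break the target inequality into three separate upper bounds and handle each in turn, since $\limsup \leq \min\{a,b,c\}$ is equivalent to the three individual bounds $\limsup \leq a$, $\limsup \leq b$, and $\limsup \leq c$.

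For the bounds by $\overline{\hat{H}_{\ell_0+1}}(p)$ and by $\overline{H_{\ell_0}}(p)$, I would use the comparison principle directly. Since $H_{\ell_0+\frac{1}{2}} = \min\{\hat{H}_{\ell_0+1}, H_{\ell_0}\}$ is pointwise bounded above by each constituent, the unique solution $\hat{v}_{\lambda,\ell_0+1}$ of (\ref{the ergodic problem of convex or concave Hamilton-Jacobi equations in general inductive steps}) for $\hat{H}_{\ell_0+1}$ — and, respectively, $v_{\lambda,\ell_0}$ of (\ref{the ergodic problem of Hamilton-Jacobi equation in general inductive steps}) at level $\ell_0$ — is automatically a viscosity subsolution of the auxiliary problem at level $\ell_0+\frac{1}{2}$. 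Comparison then gives $v_{\lambda,\ell_0+\frac{1}{2}} \geq \hat{v}_{\lambda,\ell_0+1}$ and $v_{\lambda,\ell_0+\frac{1}{2}} \geq v_{\lambda,\ell_0}$. Passing to $\limsup$ and invoking regular homogenizability of $\hat{H}_{\ell_0+1}$ (a standard quasiconcave result) and of $H_{\ell_0}$ (via the inductive hypothesis $(\text{I}_{\ell_0}^2)$) would yield the first two upper bounds.

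The third bound $\limsup -\lambda v_{\lambda,\ell_0+\frac{1}{2}}(0,p,\omega) \leq \underline{\mathrm{M}}_{\ell_0+1}$ is the main content and would mirror the flat-region treatment of the base case, Lemma \ref{the upper bound for the flat piece the base case}. I would restrict attention to $p$ with $\overline{\hat{H}_{\ell_0+1}}(p) \geq \underline{\mathrm{M}}_{\ell_0+1}$ and $\overline{H_{\ell_0}}(p) \geq \underline{\mathrm{M}}_{\ell_0+1}$, since outside this set one of the previous bounds suffices. The key structural observation is that on the boundary $\partial\Delta(x,\omega)$ of the stable-pair contact set, both $\hat{H}_{\ell_0+1}$ and $\check{H}_{\ell_0}$ take the common value $\mathrm{M}_{\ell_0+1}(x,\omega)$, so $H_{\ell_0+\frac{1}{2}}(q,x,\omega) \leq \hat{H}_{\ell_0+1}(q,x,\omega) = \mathrm{M}_{\ell_0+1}(x,\omega)$ for $q \in \partial\Delta(x,\omega)$. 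To turn this into a global subsolution of value $\approx \underline{\mathrm{M}}_{\ell_0+1}$, I would introduce an auxiliary family $\{(\hat{H}_{\ell_0+1}^\kappa, \check{H}_{\ell_0}^\kappa)\}_{\kappa \in [0,1]}$ of shifted Hamiltonians, analogous to the $H_1^\kappa$ family used in Lemma \ref{the upper bound for the flat piece the base case}, simultaneously shifting both members of the stable pair by a stationary quantity tied to $\mathrm{M}_{\ell_0+1}(x,\omega) - \underline{\mathrm{M}}_{\ell_0+1}$, so that pair stability is preserved and the contact value interpolates between the original $\mathrm{M}_{\ell_0+1}(x,\omega)$ at $\kappa = 0$ and the uniform constant $\underline{\mathrm{M}}_{\ell_0+1}$ at $\kappa = 1$.

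Using continuity and monotonicity of $\kappa \mapsto \overline{\hat{H}_{\ell_0+1}^\kappa}(p)$ and $\kappa \mapsto \overline{H_{\ell_0}^\kappa}(p)$, together with an analogue of Lemma \ref{the equivalence of minimum level sets for convex and concave Hamiltonians} identifying the boundaries of the flat regions of the shifted effective Hamiltonians, I would select $\kappa(p) \in [0,1]$ at which the shifted nonconvex Hamiltonian $H_{\ell_0+\frac{1}{2}}^{\kappa(p)}$ has effective value exactly $\underline{\mathrm{M}}_{\ell_0+1}$ at $p$; the desired bound would then follow by comparison between the original and shifted auxiliary problems. The hard part will be to arrange the shift in the right direction so that comparison actually yields an upper bound — the shifted Hamiltonian must dominate the original pointwise while simultaneously driving the effective contact value down to $\underline{\mathrm{M}}_{\ell_0+1}$ — and to verify that the shifted family inherits the full inductive hypothesis $(\text{I}_{\ell_0})$, in particular that both stable pairing (A4) and the monotonicity (\ref{the monotonicity condition}) persist, so that the inductive formula for $\overline{H_{\ell_0}^{\kappa(p)}}$ remains available when reading off the min-max expression at level $\ell_0+\frac{1}{2}$.
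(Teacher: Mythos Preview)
Your treatment of the first two bounds is exactly what the paper does: since $H_{\ell_0+\frac{1}{2}} = \min\{\hat H_{\ell_0+1}, H_{\ell_0}\}$, the comparison principle applied to $\hat v_{\lambda,\ell_0+1}$ and to $v_{\lambda,\ell_0}$ immediately yields $\limsup(-\lambda v_{\lambda,\ell_0+\frac{1}{2}}) \le \overline{\hat H_{\ell_0+1}}(p)$ and $\le \overline{H_{\ell_0}}(p)$.

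For the third bound you have badly overcomplicated matters. The paper dispatches $\limsup(-\lambda v_{\lambda,\ell_0+\frac{1}{2}}) \le \underline{\mathrm M}_{\ell_0+1}$ in one line, invoking the elementary general fact (dual to the one used in Lemma~\ref{the lower bound of the effective Hamiltonian at the base case})
\[
\limsup_{\lambda\to 0}\bigl(-\lambda v_{\lambda}(0,p,\omega)\bigr)\ \le\ \max_{q\in\RR^d}\ \essinf_{(y,\omega)} H(q,y,\omega),
\]
together with the identification $\max_q \essinf_{(y,\omega)} H_{\ell_0+\frac{1}{2}}(q,y,\omega) = \underline{\mathrm M}_{\ell_0+1}$. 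No shifted family, no inductive structure, no stable-pair analysis is needed here.

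More to the point, the difficulty you flag in your shifted-Hamiltonian plan is real and would not be resolvable as stated. In the base case (Lemma~\ref{the upper bound for the flat piece the base case}) one shifts \emph{up} by $\kappa(\overline{\mathrm m}_1 - \mathrm m_1(x,\omega)) \ge 0$; this makes the shifted Hamiltonian dominate the original (so comparison gives an upper bound) while pushing the contact value up to the \emph{supremum} $\overline{\mathrm m}_1$. Here you would need the shifted Hamiltonian to dominate the original \emph{and} drive the contact value down to the \emph{infimum} $\underline{\mathrm M}_{\ell_0+1}$; these two requirements pull in opposite directions, and no additive stationary shift achieves both. The underlying reason is that you have misidentified which direction is ``hard'': by the duality $G(p,x,\omega) := -H_{\ell_0+\frac{1}{2}}(-p,x,\omega)$ of Lemma~\ref{the symmetry between Hamiltonian and its dual in evenness and negativity}, the upper bound by $\underline{\mathrm M}_{\ell_0+1}$ for $H_{\ell_0+\frac{1}{2}}$ corresponds to the \emph{lower} bound by $\overline{\mathrm m^G}$ for $G$ --- the easy direction handled by the $\min_q\esssup$ argument, not the hard flat-region construction of Lemma~\ref{the upper bound for the flat piece the base case}.
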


\begin{proof}
	For any $(p,\lambda,\omega)\in\RR^{d}\times(0,\infty)\times\Omega$, let $v_{\lambda,\ell_{0}}(x,p,\omega)$ and $\hat{v}_{\lambda,\ell_{0} + 1}(x,p,\omega)$ be from (\ref{the ergodic problem of Hamilton-Jacobi equation in general inductive steps}) and (\ref{the ergodic problem of convex or concave Hamilton-Jacobi equations in general inductive steps}), respectively. By (\ref{the Hamiltonian at the middle inductive step}), the comparison principle indicates that
	\begin{equation*}
	\limsup_{\lambda\rightarrow 0}-\lambda v_{\lambda,\ell_{0}+\frac{1}{2}}(0,p,\omega) \leq \limsup_{\lambda\rightarrow 0}-\lambda v_{\lambda,\ell_{0}}(0,p,\omega) = \overline{H_{\ell_{0}}}(p)
	\end{equation*}
	\begin{equation*}
	\limsup_{\lambda\rightarrow 0}-\lambda v_{\lambda,\ell_{0}+\frac{1}{2}}(0,p,\omega) \leq \limsup_{\lambda\rightarrow 0}-\lambda \hat{v}_{\lambda,\ell_{0} + 1}(0,p,\omega) = \overline{\hat{H}_{\ell_{0} + 1}}(p)
	\end{equation*}	
	Finally, by a proof similar to that of the Lemma 25 in \cite{Gao CVPDE}, we get that
	\begin{equation*}
	\limsup_{\lambda\rightarrow 0}-\lambda v_{\lambda,\ell_{0}+\frac{1}{2}}(0,p,\omega) \leq \max_{p\in\RR^{d}}\essinf\limits\limits_{(y,\omega)\in\RR^{d}\times\Omega} H_{\ell_{0} + \frac{1}{2}}(p,y,\omega) = \underline{\mathrm{M}}_{\ell_{0} + 1}
	\end{equation*} 
\end{proof}

\begin{lemma}\label{the lower bound of the effective Hamiltonian at the inductive step}
	Assume (\ref{the combined assumptions at the intermediate inductive step}), let $v_{\lambda,\ell_{0}+\frac{1}{2}}(x,p_{0},\omega)$ be from (\ref{the ergodic problem of Hamilton-Jacobi equation in general inductive steps}), we have that
	\begin{equation*}
	\liminf_{\lambda\rightarrow 0}-\lambda v_{\lambda,\ell_{0}+\frac{1}{2}}(0,p,\omega) \geq \min\left\lbrace \overline{\hat{H}_{\ell_{0} + 1}}(p), \underline{\mathrm{M}}_{\ell_{0} + 1}, \overline{H_{\ell_{0}}}(p)\right\rbrace 
	\end{equation*}
\end{lemma}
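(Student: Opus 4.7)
The strategy parallels the proof of the previous Lemma but with sub- and super-solutions interchanged: we build supersolutions of the $H_{\ell_{0}+\frac{1}{2}}$ ergodic equation on balls $B_{\frac{R}{\lambda}}(0)$ whose $-\lambda$-value at the origin, in the limit, equals the right-hand side $m:=\min\left\{\overline{\hat{H}_{\ell_{0}+1}}(p),\underline{\mathrm{M}}_{\ell_{0}+1},\overline{H_{\ell_{0}}}(p)\right\}$, and then invoke the comparison bound in Proposition \ref{the comparison principle for the ergodic problem in a ball}. The essential observation that makes the argument non-trivial is that, because $H_{\ell_{0}+\frac{1}{2}}=\min\{\hat{H}_{\ell_{0}+1},H_{\ell_{0}}\}$, a function $u$ is a viscosity supersolution of the $H_{\ell_{0}+\frac{1}{2}}$ ergodic equation if and only if it is \emph{simultaneously} a viscosity supersolution of the ergodic equation for $\hat{H}_{\ell_{0}+1}$ and of the one for $H_{\ell_{0}}$. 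Thus the task is to produce one function that plays both roles. By the Lemma \ref{the monotonicity condition can be replaced by the strict monotonicity condition by approximation}, we may assume (\ref{the strict monotonicity condition}).

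Case A: $m=\overline{\hat{H}_{\ell_{0}+1}}(p)$, so in particular $\overline{\hat{H}_{\ell_{0}+1}}(p)<\underline{\mathrm{M}}_{\ell_{0}+1}$. Apply Lemma \ref{supersolutions with constraint subgradient level set concave} to the quasiconcave $\hat{H}_{\ell_{0}+1}$ at $p$ to obtain a supersolution $m_{\lambda}^{\epsilon,R}(x,p,\omega)$ of the $\hat{H}_{\ell_{0}+1}$ ergodic equation on $B_{\frac{R}{\lambda}}(0)$ satisfying $|\lambda m_{\lambda}^{\epsilon,R}+\overline{\hat{H}_{\ell_{0}+1}}(p)|<\epsilon$ and
\begin{equation*}
p+D^{-}m_{\lambda}^{\epsilon,R}(x,p,\omega)\subseteq \left\{q:\hat{H}_{\ell_{0}+1}(q,x,\omega)<\overline{\hat{H}_{\ell_{0}+1}}(p)+\tfrac{\epsilon}{3}\right\}.
\end{equation*}
For $\epsilon$ sufficiently small the right-hand set is contained in $\left\{q:\hat{H}_{\ell_{0}+1}(q,x,\omega)<\mathrm{M}_{\ell_{0}+1}(x,\omega)\right\}$. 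The stable pairing of $(\check{H}_{\ell_{0}},\hat{H}_{\ell_{0}+1})$ from (A4), together with quasiconcavity of $\hat{H}_{\ell_{0}+1}$ and convexity of the sublevel set $\{\check{H}_{\ell_{0}}\leq \mathrm{M}_{\ell_{0}+1}\}$, shows that the contact region $\Delta(x,\omega)$ coincides with $\{\check{H}_{\ell_{0}}\leq \mathrm{M}_{\ell_{0}+1}\}$ and that $\hat{H}_{\ell_{0}+1}\geq \mathrm{M}_{\ell_{0}+1}$ on $\Delta$; therefore every such $q$ lies in $\Delta^{c}$, where $\check{H}_{\ell_{0}}(q,x,\omega)>\mathrm{M}_{\ell_{0}+1}(x,\omega)>\overline{\hat{H}_{\ell_{0}+1}}(p)-\epsilon$. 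Since $H_{\ell_{0}}\geq \check{H}_{\ell_{0}}$, $m_{\lambda}^{\epsilon,R}$ is also a supersolution of the $H_{\ell_{0}}$ equation, hence of the $H_{\ell_{0}+\frac{1}{2}}$ equation. Applying Proposition \ref{the comparison principle for the ergodic problem in a ball} and letting $R\to\infty$, $\epsilon\to 0$ yields $\liminf_{\lambda\to 0}-\lambda v_{\lambda,\ell_{0}+\frac{1}{2}}(0,p,\omega)\geq \overline{\hat{H}_{\ell_{0}+1}}(p)$.

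Case B: $m=\overline{H_{\ell_{0}}}(p)$. By the inductive assumption $(\text{I}_{\ell_{0}}^{2})$, $\overline{H_{\ell_{0}}}(p)$ equals one of the atomic terms $\overline{\check{H}_{k}}(p)$, $\overline{\hat{H}_{k}}(p)$, $\overline{\mathrm{m}}_{k}$ or $\underline{\mathrm{M}}_{k}$ with $1\le k\le \ell_{0}$. In each sub-case one uses the appropriate construction — Lemma \ref{supersolutions with constraint subgradient level set convex} at $\check{H}_{k}$, Lemma \ref{supersolutions with constraint subgradient level set concave} at $\hat{H}_{k}$, or a $\kappa$-shift as in the base-case Lemmas \ref{the equivalence of minimum level sets for convex and concave Hamiltonians}, \ref{the upper bound for the flat piece the base case} for the flat terms — to produce a function that is a supersolution of the $H_{\ell_{0}}$ equation at the height $\overline{H_{\ell_{0}}}(p)$. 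Compatibility with the outer $\hat{H}_{\ell_{0}+1}$ equation is then verified by exactly the stable pair mechanism of Case A applied to $(\check{H}_{\ell_{0}},\hat{H}_{\ell_{0}+1})$: the subdifferential of the constructed supersolution is confined to a set where $\check{H}_{\ell_{0}}$ is strictly smaller than $\mathrm{M}_{\ell_{0}+1}$, which places it inside $\Delta$, where $\hat{H}_{\ell_{0}+1}\geq \mathrm{M}_{\ell_{0}+1}>\overline{H_{\ell_{0}}}(p)$. Case C, $m=\underline{\mathrm{M}}_{\ell_{0}+1}$, is obtained by a shift: the monotone family $\hat{H}_{\ell_{0}+1}^{\kappa}:=\hat{H}_{\ell_{0}+1}+\kappa\left(\underline{\mathrm{M}}_{\ell_{0}+1}-\mathrm{M}_{\ell_{0}+1}(\cdot,\cdot)\right)$ is quasiconcave and satisfies $\overline{\hat{H}_{\ell_{0}+1}^{0}}(p)\geq \underline{\mathrm{M}}_{\ell_{0}+1}$; by the continuity and monotonicity of $\kappa\mapsto \overline{\hat{H}_{\ell_{0}+1}^{\kappa}}(p)$ there exists $\kappa_{*}\in [0,1]$ with $\overline{\hat{H}_{\ell_{0}+1}^{\kappa_{*}}}(p)=\underline{\mathrm{M}}_{\ell_{0}+1}$. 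Re-run Case A for this shifted Hamiltonian to obtain the bound at height $\underline{\mathrm{M}}_{\ell_{0}+1}$.

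The main obstacle is Case B: the identity of the atomic term realising $\overline{H_{\ell_{0}}}(p)$ depends delicately on $p$ and one must, for every possibility, verify compatibility of the constructed supersolution with the \emph{outer} pair $(\check{H}_{\ell_{0}},\hat{H}_{\ell_{0}+1})$. This is precisely where the monotonicity condition (\ref{the strict monotonicity condition}) is essential: the bounds $\overline{\mathrm{m}}_{\ell_{0}}\geq \overline{\mathrm{m}}_{\ell_{0}+1}$ and $\underline{\mathrm{M}}_{\ell_{0}}\leq \underline{\mathrm{M}}_{\ell_{0}+1}$ guarantee that the flat plateaus and contact values of interior pairs are properly comparable with $\mathrm{M}_{\ell_{0}+1}(x,\omega)$, so that the stable pair dichotomy keeps the subdifferentials on the correct side of $\Delta$. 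Without this, the constructed function may fail to satisfy the $\hat{H}_{\ell_{0}+1}$ equation, reflecting the necessity of (\ref{the monotonicity condition}) in the statement of the Theorem.
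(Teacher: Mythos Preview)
Your approach is fundamentally different from the paper's and, as written, Case B has a genuine gap.

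The paper does \emph{not} build supersolutions by hand. Instead it exploits the duality of Lemma \ref{the symmetry between Hamiltonian and its dual in evenness and negativity}. Writing
\[
\mathcal{H}_{\ell_{0}+\frac12}:=\min\bigl\{\hat{H}_{\ell_{0}+1},\max\{\check{H}_{\ell_{0}},\cdots,\min\{\hat{H}_{2},\check{H}_{1}\}\cdots\}\bigr\},
\]
one has $H_{\ell_{0}+\frac12}=\max\{\mathcal{H}_{\ell_{0}+\frac12},\hat{H}_{1}\}$. Now $-\mathcal{H}_{\ell_{0}+\frac12}(-p,\cdot,\cdot)$ is again a min--max of $\ell_{0}$ quasiconvex and $\ell_{0}$ quasiconcave Hamiltonians (with the roles of $\check{}$ and $\hat{}$ swapped and the innermost $\hat{H}_{1}$ removed), so the inductive hypothesis applies and gives an explicit formula for $\overline{\mathcal{H}_{\ell_{0}+\frac12}}$. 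Since $H_{\ell_{0}+\frac12}\ge \mathcal{H}_{\ell_{0}+\frac12}$ and $H_{\ell_{0}+\frac12}\ge \hat{H}_{1}$, the ordinary comparison principle yields $\liminf_{\lambda\to0}-\lambda v_{\lambda,\ell_{0}+\frac12}(0,p,\omega)\ge\max\{\overline{\mathcal{H}_{\ell_{0}+\frac12}}(p),\overline{\hat{H}_{1}}(p)\}$, and (\ref{the monotonicity condition}) then shows that this maximum equals $\min\{\overline{\hat{H}_{\ell_{0}+1}}(p),\underline{\mathrm{M}}_{\ell_{0}+1},\overline{H_{\ell_{0}}}(p)\}$. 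No case distinction, no constrained sub/super-differentials, no $\kappa$-shifts.

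The gap in your Case B is this: Lemmas \ref{supersolutions with constraint subgradient level set convex}--\ref{supersolutions with constraint subgradient level set concave} apply only to \emph{quasiconvex} or \emph{quasiconcave} Hamiltonians, so applying them to an atomic $\check{H}_{k}$ or $\hat{H}_{k}$ gives you a supersolution of the $\ddot{H}_{k}$ ergodic equation with controlled subdifferentials. You then assert that this function is a supersolution of the full nonconvex $H_{\ell_{0}}$ equation. But for $q\in p+D^{-}m_{\lambda}^{\epsilon,R}$ you need $H_{\ell_{0}}(q,x,\omega)\ge -\lambda m_{\lambda}^{\epsilon,R}$, and $H_{\ell_{0}}$ is a nested min--max that need not dominate $\ddot{H}_{k}$ on the constrained set; establishing this would require cascading the stable-pair dichotomy through \emph{all} intermediate levels $k,k+1,\dots,\ell_{0}$, not just the outermost pair $(\check{H}_{\ell_{0}},\hat{H}_{\ell_{0}+1})$ you invoke. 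That is essentially a second induction, which you have not carried out. Case C has a related issue: $\mathrm{M}_{\ell_{0}+1}(x,\omega)$ is the contact value of $(\hat{H}_{\ell_{0}+1},\check{H}_{\ell_{0}})$, not the pointwise maximum of $\hat{H}_{\ell_{0}+1}$, so the shifted family $\hat{H}_{\ell_{0}+1}^{\kappa}$ does not obviously satisfy $\overline{\hat{H}_{\ell_{0}+1}^{1}}(p)\le\underline{\mathrm{M}}_{\ell_{0}+1}$, and after the shift the stable-pair geometry with $\check{H}_{\ell_{0}}$ is altered, so ``re-run Case A'' is not automatic.
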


\begin{proof}
	Let us denote
	\begin{equation*}
	\mathcal{H}_{\ell_{0} + \frac{1}{2}} := \min\left\lbrace \hat{H}_{\ell_{0} + 1},\max\left\lbrace \check{H}_{\ell_{0}},\cdots, \min\left\lbrace\hat{H}_{2}, \check{H}_{1} \right\rbrace  \cdots \right\rbrace  \right\rbrace 
	\end{equation*}
	Let us apply (\ref{the combined assumptions at the intermediate inductive step}) to $\ell_{0}$ quasiconcave Hamiltonians $\left\lbrace -\check{H}_{i}(-p,x,\omega)\right\rbrace_{i = 1}^{\ell_{0}}$ and to $\ell_{0}$ quasiconvex Hamiltonians $\left\lbrace -\hat{H}_{j}(-p,x,\omega)\right\rbrace_{j = 2}^{\ell_{0} + 1}$. In view of the Lemma \ref{the symmetry between Hamiltonian and its dual in evenness and negativity}, we get that
	\begin{equation*}
	\overline{\mathcal{H}_{\ell_{0} + \frac{1}{2}}} = \min\left\lbrace \overline{\hat{H}_{\ell_{0} + 1}},\underline{\mathrm{M}}_{\ell_{0} + 1}, \max\left\lbrace \overline{\check{H}_{\ell_{0}}},\overline{\mathrm{m}}_{\ell_{0}},\cdots, \min\left\lbrace \overline{\hat{H}_{2}}, \underline{\mathrm{M}}_{2}, \overline{\check{H}_{1}}\right\rbrace  \cdots\right\rbrace  \right\rbrace 
	\end{equation*}
	On the other hand, the following relation
	\begin{equation*}
	H_{\ell_{0} + \frac{1}{2}}(p,x,\omega) = \max\left\lbrace \mathcal{H}_{\ell_{0} + \frac{1}{2}}(p,x,\omega), \hat{H}_{1}(p,x,\omega) \right\rbrace 
	\end{equation*}
	implies that
	\begin{equation*}
	\liminf_{\lambda\rightarrow 0}-\lambda v_{\lambda,\ell_{0}+\frac{1}{2}}(0,p,\omega) \geq \max\left\lbrace \overline{\mathcal{H}_{\ell_{0} + \frac{1}{2}}}(p), \overline{\hat{H}_{1}}(p)\right\rbrace 
	\end{equation*}
	We finish the proof by recalling (\ref{the monotonicity condition}) and by observing the following equation.
	\begin{equation*}
	\max\left\lbrace \overline{\mathcal{H}_{\ell_{0} + \frac{1}{2}}}(p), \overline{\hat{H}_{1}}(p)\right\rbrace = \min\left\lbrace \overline{\hat{H}_{\ell_{0} + 1}}(p), \underline{\mathrm{M}}_{\ell_{0} + 1}, \overline{H_{\ell_{0}}}(p)\right\rbrace 
	\end{equation*}
\end{proof}

\subsubsection{The inductive step: from $\ell_{0} + \frac{1}{2}$ to $\ell_{0} + 1$}

\begin{proposition}\label{the result at the intermediate inductive step in the second half part}
	Assume (\ref{the combined assumptions at the intermediate inductive step}), then $H_{\ell_{0}+1}(p,x,\omega)$ is also regularly homogenizable for all $p\in\RR^{d}$. Furthermore, the following equality holds.
	\begin{eqnarray*}
	\overline{H_{\ell_{0}+1}}(p) = \max\left\lbrace \overline{\check{H}_{\ell_{0} + 1}}(p), \overline{\mathrm{m}}_{\ell_{0} + 1}, \overline{H_{\ell_{0}+\frac{1}{2}}}(p)\right\rbrace 
    \end{eqnarray*}
\end{proposition}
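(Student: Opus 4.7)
The plan is to mirror the three-region decomposition used in the base case, Proposition \ref{the result at the base case}, with $\check{H}_{\ell_{0}+1}$ playing the role previously occupied by $\check{H}_{1}$ and the inductively homogenizable Hamiltonian $H_{\ell_{0}+\frac{1}{2}}$ playing the role of $\hat{H}_{1}$. First I would reduce to the strict monotonicity condition (\ref{the strict monotonicity condition}) using the approximation Lemma \ref{the monotonicity condition can be replaced by the strict monotonicity condition by approximation} and the stability Lemma \ref{the stability of homogenization}, so that (\ref{the strict monotonicity condition}) can be assumed throughout.

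The lower bound will be the easy direction. Since $H_{\ell_{0}+1} \ge \check{H}_{\ell_{0}+1}$ and $H_{\ell_{0}+1} \ge H_{\ell_{0}+\frac{1}{2}}$, both $\check{v}_{\lambda,\ell_{0}+1}$ and $v_{\lambda,\ell_{0}+\frac{1}{2}}$ are supersolutions of the $H_{\ell_{0}+1}$ equation, and comparison combined with $(I_{\ell_{0}})$ and Proposition \ref{the result at the intermediate inductive step} delivers $\liminf_{\lambda\to 0}-\lambda v_{\lambda,\ell_{0}+1}(0,p,\omega) \ge \max\{\overline{\check{H}_{\ell_{0}+1}}(p), \overline{H_{\ell_{0}+\frac{1}{2}}}(p)\}$. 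The $\overline{\mathrm{m}}_{\ell_{0}+1}$ contribution will follow from the standard infsup inequality $\liminf -\lambda v(0,p,\omega) \ge \min_{q}\esssup_{y} H_{\ell_{0}+1}(q,y,\omega)$ (cf.\ Lemma 25 of \cite{Gao CVPDE}), once one verifies that at any contact point $q_{0}\in \partial\Delta_{\ell_{0}+1}(x,\omega)$ of the stable pair $(\check{H}_{\ell_{0}+1},\hat{H}_{\ell_{0}+1})$, the recursion (\ref{the derivation of the nonconvex Hamiltonian through minmax formula}) together with the ordering (\ref{the monotone ordering of Hamiltonians}) forces $H_{\ell_{0}+1}(q_{0},x,\omega)=\mathrm{m}_{\ell_{0}+1}(x,\omega)$.

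The upper bound is the nontrivial half. I would partition $\RR^{d}$ into the convex region $\mathrm{A}:=\overline{\{\overline{\check{H}_{\ell_{0}+1}}>\overline{\mathrm{m}}_{\ell_{0}+1}\}}$, the nonconvex-min region $\mathrm{B}:=\overline{\{\overline{H_{\ell_{0}+\frac{1}{2}}}>\overline{\mathrm{m}}_{\ell_{0}+1}\}}$, and the flat region $\mathrm{C}:=(\mathrm{A}\cup \mathrm{B})^{c}$. In $\mathrm{A}$, Lemma \ref{subsolutions with constraint supergradient level set convex} applied to $\check{H}_{\ell_{0}+1}$ produces a subsolution $w_{\lambda}^{\epsilon,R}$ of the $\check{H}_{\ell_{0}+1}$ equation whose superdifferential forces $q\in \Delta_{\ell_{0}+1}^{c}$; one then checks $\check{H}_{\ell_{0}+1}(q)>\hat{H}_{\ell_{0}+1}(q) \ge H_{\ell_{0}+\frac{1}{2}}(q)$, giving $H_{\ell_{0}+1}(q)=\check{H}_{\ell_{0}+1}(q)$, so $w_{\lambda}^{\epsilon,R}$ is also a subsolution of the $H_{\ell_{0}+1}$ equation and Proposition \ref{the comparison principle for the ergodic problem in a ball} yields $\limsup -\lambda v_{\lambda,\ell_{0}+1}(0)\le \overline{\check{H}_{\ell_{0}+1}}(p)$. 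In $\mathrm{B}$ the analogous tool, the counterpart of Lemma \ref{subsolutions with constraint supergradient level set concave} for $H_{\ell_{0}+\frac{1}{2}}$, is not directly available because $H_{\ell_{0}+\frac{1}{2}}$ is a min of a quasiconcave and a general nonconvex Hamiltonian; I would obtain it by applying Lemma \ref{the symmetry between Hamiltonian and its dual in evenness and negativity} to the dual $\widetilde{H}_{\ell_{0}+\frac{1}{2}}(p,x,\omega):=-H_{\ell_{0}+\frac{1}{2}}(-p,x,\omega)$, which inherits a $H_{\ell_{0}+\frac{1}{2}}$-type structure built from the reflected-negated Hamiltonians $\widetilde{\check{H}}_{k}(p):=-\hat{H}_{k+1}(-p)$ and $\widetilde{\hat{H}}_{k}(p):=-\check{H}_{k}(-p)$, so that Proposition \ref{the result at the intermediate inductive step} applied in the tilde system returns, after dualization, the required constrained subsolution. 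At the resulting superdifferential points, the ordering (\ref{the monotone ordering of Hamiltonians}) gives $\check{H}_{\ell_{0}+1}\le \check{H}_{\ell_{0}} \le H_{\ell_{0}}$ while $\hat{H}_{\ell_{0}+1}\ge H_{\ell_{0}+\frac{1}{2}}>\mathrm{m}_{\ell_{0}+1}(x,\omega)$, so $H_{\ell_{0}+1}=H_{\ell_{0}+\frac{1}{2}}$ there and comparison yields $\limsup -\lambda v_{\lambda,\ell_{0}+1}(0)\le \overline{H_{\ell_{0}+\frac{1}{2}}}(p)$.

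The flat region $\mathrm{C}$ will be the main obstacle. The plan is to introduce the auxiliary family $H_{\ell_{0}+1}^{\kappa}(p,x,\omega):=H_{\ell_{0}+1}(p,x,\omega)+\kappa(\overline{\mathrm{m}}_{\ell_{0}+1}-\mathrm{m}_{\ell_{0}+1}(x,\omega))$ for $\kappa \in [0,1]$ together with its companions $\check{H}_{\ell_{0}+1}^{\kappa}$ and $H_{\ell_{0}+\frac{1}{2}}^{\kappa}$, and to establish the analog of Lemma \ref{the equivalence of minimum level sets for convex and concave Hamiltonians}: the boundaries $\partial\overline{\{\overline{\check{H}_{\ell_{0}+1}^{1}}>\overline{\mathrm{m}}_{\ell_{0}+1}\}}$ and $\partial\overline{\{\overline{H_{\ell_{0}+\frac{1}{2}}^{1}}>\overline{\mathrm{m}}_{\ell_{0}+1}\}}$ coincide. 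The base-case metric/Aubry argument handles the quasiconvex side directly, but $H_{\ell_{0}+\frac{1}{2}}$ is not quasiconcave; to replace it I would invoke the representation $\overline{H_{\ell_{0}+\frac{1}{2}}^{1}}=\min\{\overline{\hat{H}_{\ell_{0}+1}^{1}},\underline{\mathrm{M}}_{\ell_{0}+1},\overline{H_{\ell_{0}}^{1}}\}$ from Proposition \ref{the result at the intermediate inductive step}, together with (\ref{the strict monotonicity condition}), which forces $\underline{\mathrm{M}}_{\ell_{0}+1}>\overline{\mathrm{m}}_{\ell_{0}+1}$ and $\overline{H_{\ell_{0}}^{1}}(p) \ge \overline{\mathrm{m}}_{\ell_{0}} > \overline{\mathrm{m}}_{\ell_{0}+1}$, so that near the critical boundary the constraint $\overline{H_{\ell_{0}+\frac{1}{2}}^{1}}>\overline{\mathrm{m}}_{\ell_{0}+1}$ collapses to $\overline{\hat{H}_{\ell_{0}+1}^{1}}>\overline{\mathrm{m}}_{\ell_{0}+1}$. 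This reduces the equivalence to the base-case stable pair $(\check{H}_{\ell_{0}+1},\hat{H}_{\ell_{0}+1})$, and continuity in $\kappa$ then produces $\limsup -\lambda v_{\lambda,\ell_{0}+1}(0)\le \overline{\mathrm{m}}_{\ell_{0}+1}$, as in Lemma \ref{the upper bound for the flat piece the base case}.
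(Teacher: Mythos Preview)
Your lower-bound argument matches the paper's Lemma~\ref{the lower bound of the effective Hamiltonian at the intermediate inductive step in the second half} exactly. The upper bound, however, follows a route that the paper deliberately avoids, and your Region~B step contains a genuine gap.

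The constrained-superdifferential property you need for $H_{\ell_{0}+\frac{1}{2}}$ --- that $p_{0}+D^{+}v_{\lambda,\ell_{0}+\frac{1}{2}} \subset \{q:H_{\ell_{0}+\frac{1}{2}}(q,x,\omega) > \overline{H_{\ell_{0}+\frac{1}{2}}}(p_{0})-\epsilon\}$ --- is exactly the content of Lemma~\ref{subsolutions with constraint supergradient level set concave}, and its proof hinges on the fact that $-H$ is quasiconvex, so that an a.e.\ subsolution of the $(-H)$-equation is automatically a viscosity subsolution. For $H_{\ell_{0}+\frac{1}{2}}$ this step fails: $-H_{\ell_{0}+\frac{1}{2}} = \max\{-\hat{H}_{\ell_{0}+1}, -H_{\ell_{0}}\}$ is not quasiconvex because $-H_{\ell_{0}}$ is not. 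Your dualization does produce a Hamiltonian $\widetilde{H}_{\ell_{0}+\frac{1}{2}}$ of min--max type, but invoking Proposition~\ref{the result at the intermediate inductive step} (or the inductive hypothesis) on it yields only regular homogenizability, a statement about $\lambda v_{\lambda}(0)$; it does \emph{not} deliver the pointwise superdifferential constraint you need. No lemma in the paper upgrades regular homogenizability of a general nonconvex Hamiltonian to a constrained-subsolution property. A related issue infects Region~C: applying Proposition~\ref{the result at the intermediate inductive step} to the $\kappa$-shifted family requires the shifted $\overline{\mathrm{m}}_{i}^{\kappa}$, $\underline{\mathrm{M}}_{i}^{\kappa}$ to still satisfy (\ref{the strict monotonicity condition}), but the shift $\kappa(\overline{\mathrm{m}}_{\ell_{0}+1}-\mathrm{m}_{\ell_{0}+1}(x,\omega))$ mixes the $(x,\omega)$-dependence of level $\ell_{0}+1$ into all lower levels, and the essential supremum of a sum is not the sum of essential suprema.

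The paper's upper bound (Lemma~\ref{the upper bound of the effective Hamiltonian at the inductive step in the last half}) bypasses all of this by never attempting a region decomposition. It observes that $H_{\ell_{0}+1}$ is pointwise dominated by two Hamiltonians already known to homogenize: $\mathcal{H}_{\ell_{0}+1}$, the $H_{\ell_{0}}$-type Hamiltonian built from the indices $2,\dots,\ell_{0}+1$ (so that the inductive hypothesis $(\mathrm{I}_{\ell_{0}})$ applies directly to this shifted family), and $\scalebox{1.2}{\Pfund}_{\ell_{0}+1}=\max\{\check{H}_{1},\hat{H}_{1}\}$, handled by the base case. Plain comparison gives $\limsup_{\lambda\to 0}-\lambda v_{\lambda,\ell_{0}+1}(0,p,\omega)\le \min\{\overline{\mathcal{H}_{\ell_{0}+1}}(p),\overline{\scalebox{1.2}{\Pfund}_{\ell_{0}+1}}(p)\}$, and a short min--max identity using (\ref{the monotonicity condition}) shows this minimum equals $\max\{\overline{\check{H}_{\ell_{0}+1}}(p),\overline{\mathrm{m}}_{\ell_{0}+1},\overline{H_{\ell_{0}+\frac{1}{2}}}(p)\}$. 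No constrained subsolutions, no $\kappa$-family, and no level-set boundary matching are needed at this step.
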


\begin{proof}
	Based on the Definition \ref{the definition of regularly homogenizable} and the Proposition \ref{the simpler version of regularly homogenizability}, this proposition follows from the Lemma \ref{the lower bound of the effective Hamiltonian at the intermediate inductive step in the second half} and the Lemma \ref{the upper bound of the effective Hamiltonian at the inductive step in the last half}.
\end{proof}

\begin{lemma}\label{the lower bound of the effective Hamiltonian at the intermediate inductive step in the second half}
	Assume (\ref{the combined assumptions at the intermediate inductive step}), then for $v_{\lambda,\ell_{0}+1}(x,p_{0},\omega)$ be from (\ref{the ergodic problem of Hamilton-Jacobi equation in general inductive steps}), we have that
	\begin{equation*}
	\liminf_{\lambda\rightarrow 0}-\lambda v_{\lambda,\ell_{0}+1}(0,p,\omega) \geq \max\left\lbrace \overline{\check{H}_{\ell_{0} + 1}}(p), \overline{\mathrm{m}}_{\ell_{0} + 1}, \overline{H_{\ell_{0}+\frac{1}{2}}}(p)\right\rbrace 
	\end{equation*}
\end{lemma}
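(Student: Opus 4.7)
The plan is to establish three separate lower bounds matching the three terms of the outer maximum. The first two follow from the comparison principle: since $H_{\ell_{0}+1} = \max\{\check{H}_{\ell_{0}+1}, H_{\ell_{0}+\frac{1}{2}}\}$ dominates both $\check{H}_{\ell_{0}+1}$ and $H_{\ell_{0}+\frac{1}{2}}$ pointwise, comparison in the ergodic problem (\ref{the ergodic problem of Hamilton-Jacobi equation in general inductive steps}) gives $v_{\lambda, \ell_{0}+1}(x, p, \omega) \leq \check{v}_{\lambda, \ell_{0}+1}(x, p, \omega)$ and $v_{\lambda, \ell_{0}+1}(x, p, \omega) \leq v_{\lambda, \ell_{0}+\frac{1}{2}}(x, p, \omega)$; evaluating at $x = 0$, multiplying by $-\lambda$, and passing to $\liminf_{\lambda \to 0}$ yields $\liminf_{\lambda \to 0} -\lambda v_{\lambda, \ell_{0}+1}(0, p, \omega) \geq \overline{\check{H}_{\ell_{0}+1}}(p)$ and $\liminf_{\lambda \to 0} -\lambda v_{\lambda, \ell_{0}+1}(0, p, \omega) \geq \overline{H_{\ell_{0}+\frac{1}{2}}}(p)$, invoking the classical quasiconvex homogenization theorem for $\check{H}_{\ell_{0}+1}$ and the just-established Proposition \ref{the result at the intermediate inductive step} for $H_{\ell_{0}+\frac{1}{2}}$.

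For the third bound, $\liminf_{\lambda \to 0} -\lambda v_{\lambda, \ell_{0}+1}(0, p, \omega) \geq \overline{\mathrm{m}}_{\ell_{0}+1}$, I would invoke the general ergodic lower estimate already used in the base-case Lemma \ref{the lower bound of the effective Hamiltonian at the base case} (the analogue of Lemma~25 of \cite{Gao CVPDE}),
\begin{equation*}
\liminf_{\lambda \to 0} -\lambda v_{\lambda, \ell_{0}+1}(0, p, \omega) \geq \min_{q \in \RR^{d}} \esssup_{(y, \omega) \in \RR^{d}\times\Omega} H_{\ell_{0}+1}(q, y, \omega),
\end{equation*}
and reduce the task to verifying that $\esssup_{(y, \omega)} H_{\ell_{0}+1}(q, y, \omega) \geq \overline{\mathrm{m}}_{\ell_{0}+1}$ for every $q \in \RR^{d}$. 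Fix $q$ and $\delta > 0$; by definition of $\overline{\mathrm{m}}_{\ell_{0}+1}$, the event $E_{\delta} := \{(y, \omega) : \mathrm{m}_{\ell_{0}+1}(y, \omega) > \overline{\mathrm{m}}_{\ell_{0}+1} - \delta\}$ has positive probability. On $E_{\delta}$ I would decompose using the two stable-pair regions $\Delta_{\ell_{0}+1}(y, \omega) = \{\hat{H}_{\ell_{0}+1} \geq \check{H}_{\ell_{0}+1}\}$ and $\Delta'(y, \omega) = \{\hat{H}_{\ell_{0}+1} \geq \check{H}_{\ell_{0}}\}$, exploiting the inclusion $\Delta' \subseteq \Delta_{\ell_{0}+1}$ (which follows from $\check{H}_{\ell_{0}} \geq \check{H}_{\ell_{0}+1}$ in (\ref{the monotone ordering of Hamiltonians})) and the resulting pointwise inequality $\mathrm{M}_{\ell_{0}+1}(y, \omega) \geq \mathrm{m}_{\ell_{0}+1}(y, \omega)$, itself a consequence of quasiconcavity of $\hat{H}_{\ell_{0}+1}$ along chords of $\overline{\Delta_{\ell_{0}+1}}$. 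The easy cases are $q \notin \Delta_{\ell_{0}+1}$, where the stable pair property gives $\check{H}_{\ell_{0}+1}(q) > \mathrm{m}_{\ell_{0}+1}$ and $H_{\ell_{0}+1} \geq \check{H}_{\ell_{0}+1}$ closes the case; and $q \in \Delta_{\ell_{0}+1} \setminus \Delta'$, where $\check{H}_{\ell_{0}}(q) > \mathrm{M}_{\ell_{0}+1} \geq \mathrm{m}_{\ell_{0}+1}$ forces $H_{\ell_{0}}(q) \geq \check{H}_{\ell_{0}}(q) > \mathrm{m}_{\ell_{0}+1}$, while quasiconcavity yields $\hat{H}_{\ell_{0}+1}(q) \geq \mathrm{m}_{\ell_{0}+1}$, so that the max-min structure of $H_{\ell_{0}+1}$ delivers the bound.

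The hard part will be the residual sub-case $q \in \Delta_{\ell_{0}+1}(y, \omega) \cap \Delta'(y, \omega)$, in which both $\check{H}_{\ell_{0}+1}(q)$ and $\check{H}_{\ell_{0}}(q)$ lie below their respective contact values and one needs a pointwise (or positive-probability) lower bound on $H_{\ell_{0}}(q, y, \omega)$ at the chosen $q$. My strategy here would be to recurse the same dichotomy one level deeper through the pairs $(\check{H}_{\ell_{0}}, \hat{H}_{\ell_{0}})$ and $(\hat{H}_{\ell_{0}}, \check{H}_{\ell_{0}-1})$, descending the nested stable-pair chain down to the base pair $(\check{H}_{1}, \hat{H}_{1})$, at which point the pointwise bound $H_{1}(q) \geq \mathrm{m}_{1}(y, \omega)$ proved in the base case closes the final alternative. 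The strict monotonicity (\ref{the strict monotonicity condition}) together with the inductive hypothesis $(\text{I}_{\ell_{0}}^{2})$ should keep the successive lower bounds above $\overline{\mathrm{m}}_{\ell_{0}+1} - \delta$ on a positive-measure sub-event of $E_{\delta}$, selected compatibly via the stationary ergodic structure of the random media. Sending $\delta \downarrow 0$ then completes the argument.
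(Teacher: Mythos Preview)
Your two comparison bounds against $\check{H}_{\ell_{0}+1}$ and $H_{\ell_{0}+\frac{1}{2}}$ are exactly the paper's argument, and for the third bound you invoke the same general ergodic lower estimate $\liminf_{\lambda\to 0}(-\lambda v_{\lambda,\ell_{0}+1})\geq \min_{q}\esssup_{(y,\omega)}H_{\ell_{0}+1}(q,y,\omega)$ that the paper cites from \cite{Gao CVPDE}. So the overall architecture matches.

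Where you diverge is in the justification of $\min_{q}\esssup_{(y,\omega)}H_{\ell_{0}+1}(q,y,\omega)\geq\overline{\mathrm{m}}_{\ell_{0}+1}$. The paper simply records this as an equality, treating it as an immediate consequence of the stable-pair structure (A4) together with the cited lemma, and does not spell out any case analysis. You instead embark on a pointwise dichotomy through the nested regions $\Delta_{\ell_{0}+1}$, $\Delta'$, and then propose to recurse down the full chain of stable pairs to $(\check{H}_1,\hat{H}_1)$. The easy cases you describe are fine, but the ``hard part'' is only a sketch: you need, for a \emph{fixed} $q$, a positive-measure set of $(y,\omega)$ on which several $(y,\omega)$-dependent inclusion conditions (one per level of the recursion) hold simultaneously, and you have not explained why such a common sub-event exists. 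The strict monotonicity (\ref{the strict monotonicity condition}) constrains only the $\esssup$/$\essinf$ of the contact values, not their pointwise ordering, so the phrase ``should keep the successive lower bounds above $\overline{\mathrm{m}}_{\ell_{0}+1}-\delta$ on a positive-measure sub-event'' hides precisely the step that needs proof. In short: your route is not wrong, but the recursive tail is incomplete and much heavier than what the paper does; the paper regards the identity as a direct structural fact and moves on.
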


\begin{proof}
	For any $(p,\lambda,\omega) \in \RR^{d} \times (0,\lambda) \times \Omega$, let $v_{\lambda,\ell_{0} + \frac{1}{2}}(x,p,\omega)$ and $\check{v}_{\lambda,\ell_{0} + 1}(x,p,\omega)$ be from (\ref{the ergodic problem of Hamilton-Jacobi equation in general inductive steps}) and (\ref{the ergodic problem of convex or concave Hamilton-Jacobi equations in general inductive steps}), respectively. Because of the ordering relations  $H_{\ell_{0} + 1}(p,x,\omega) \geq H_{\ell_{0} + \frac{1}{2}}(p,x,\omega)$ and $H_{\ell_{0} + 1}(p,x,\omega) \geq \check{H}_{\ell_{0} + 1}(p,x,\omega)$, we have that
	\begin{eqnarray*}
	\lambda v_{\lambda,\ell_{0} + \frac{1}{2}} + H_{\ell_{0} + 1}(p + Dv_{\lambda,\ell_{0} + \frac{1}{2}},x,\omega) \geq 0, && x\in\RR^{d}
	\end{eqnarray*}
	and
	\begin{eqnarray*}
	\lambda \check{v}_{\lambda,\ell_{0} + 1} + H_{\ell_{0} + 1}(p + D\check{v}_{\lambda,\ell_{0} + 1},x,\omega) \geq 0, && x\in\RR^{d}
	\end{eqnarray*}
	By applying comparison principle to the above supersolutions and the solution $v_{\lambda,\ell_{0} + 1}(x,p,\omega)$, we get that
	\begin{equation*}
	\liminf\limits\limits_{\lambda\rightarrow 0} - \lambda v_{\lambda,\ell_{0} + 1}(0,p,\omega) \geq \liminf\limits\limits_{\lambda\rightarrow 0} - \lambda v_{\lambda,\ell_{0} + \frac{1}{2}}(0,p,\omega) = \overline{H_{\ell_{0} + \frac{1}{2}}}(p) 
	\end{equation*}
	\begin{equation*}
	\liminf\limits\limits_{\lambda\rightarrow 0} - \lambda v_{\lambda,\ell_{0} + 1}(0,p,\omega) \geq \liminf\limits\limits_{\lambda\rightarrow 0} - \lambda \check{v}_{\lambda,\ell_{0} + 1}(0,p,\omega) = \overline{\check{H}_{\ell_{0} + 1}}(p) 
	\end{equation*}
	Finally, we can employ a proof similar to that of the Lemma 25 in \cite{Gao CVPDE} to get that
	\begin{equation*}
	\liminf_{\lambda\rightarrow 0}-\lambda v_{\lambda,\ell_{0}+1}(0,p,\omega) \geq \min_{q\in\RR^{d}}\esssup\limits\limits_{(y,\omega)\in\RR^{d}\times\Omega} H_{\ell_{0} + 1}(q,y,\omega) = \overline{\mathrm{m}}_{\ell_{0} + 1}
	\end{equation*} 
	\end{proof}

\begin{lemma}\label{the upper bound of the effective Hamiltonian at the inductive step in the last half}
	Assume (\ref{the combined assumptions at the intermediate inductive step}), then for $v_{\lambda,\ell_{0}+1}(x,p_{0},\omega)$ be from (\ref{the ergodic problem of Hamilton-Jacobi equation in general inductive steps}), we have that
	\begin{equation*}
		\limsup_{\lambda\rightarrow 0}-\lambda v_{\lambda,\ell_{0}+1}(0,p,\omega) \leq \max\left\lbrace \overline{\check{H}_{\ell_{0} + 1}}(p), \overline{\mathrm{m}}_{\ell_{0} + 1}, \overline{H_{\ell_{0} + \frac{1}{2}}}(p)\right\rbrace 
	\end{equation*}
\end{lemma}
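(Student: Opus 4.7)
The plan is to mirror the base case (Proposition~\ref{the result at the base case}), partitioning $\RR^{d}$ into three regions according to which of $\overline{\check{H}_{\ell_{0}+1}}(p)$, $\overline{\mathrm{m}}_{\ell_{0}+1}$, or $\overline{H_{\ell_{0}+\frac{1}{2}}}(p)$ realizes the maximum, and bounding $\limsup_{\lambda \to 0}(-\lambda v_{\lambda,\ell_{0}+1}(0,p,\omega))$ region by region. On the closed set $A_{1} := \overline{\{q \in \RR^{d} : \overline{\check{H}_{\ell_{0}+1}}(q) > \overline{\mathrm{m}}_{\ell_{0}+1}\}}$ I would reproduce the argument of Lemma~\ref{effetive Hamiltonian on the convex part base case}: apply Lemma~\ref{subsolutions with constraint supergradient level set convex} to the quasiconvex $\check{H}_{\ell_{0}+1}$ at $p$ with $\epsilon_{0} := 3(\overline{\check{H}_{\ell_{0}+1}}(p) - \overline{\mathrm{m}}_{\ell_{0}+1})$ to produce a subsolution $w_{\lambda}^{\epsilon,R}$ of the $\check{H}_{\ell_{0}+1}$-ergodic equation on $B_{R/\lambda}(0)$ whose superdifferentials lie in $\{q : \check{H}_{\ell_{0}+1}(q,x,\omega) > \overline{\check{H}_{\ell_{0}+1}}(p) - \epsilon/3\}$. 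At every such superdifferential point the stable-pair assumption~(A4) for $(\check{H}_{\ell_{0}+1},\hat{H}_{\ell_{0}+1})$ forces $\hat{H}_{\ell_{0}+1}(p+q,x,\omega) < \check{H}_{\ell_{0}+1}(p+q,x,\omega) \leq -\lambda w_{\lambda}^{\epsilon,R}$, and since $H_{\ell_{0}+\frac{1}{2}} \leq \hat{H}_{\ell_{0}+1}$, this promotes $w_{\lambda}^{\epsilon,R}$ to a subsolution of the full $H_{\ell_{0}+1}$-ergodic equation on $B_{R/\lambda}(0)$. The comparison principle inside the ball (Proposition~\ref{the comparison principle for the ergodic problem in a ball}) then yields the upper bound $\overline{\check{H}_{\ell_{0}+1}}(p)$ after letting $(\epsilon,R) \to (0,\infty)$.

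The main obstacle is the region $A_{2} := \overline{\{q \in \RR^{d} : \overline{H_{\ell_{0}+\frac{1}{2}}}(q) > \overline{\mathrm{m}}_{\ell_{0}+1}\}}$. Because $H_{\ell_{0}+\frac{1}{2}}$ is genuinely nonconvex, no direct analog of Lemma~\ref{subsolutions with constraint supergradient level set concave} is available; my plan is to pass through the evenness--negation duality $\tilde{G}(p,x,\omega) := -G(-p,x,\omega)$ of Lemma~\ref{the symmetry between Hamiltonian and its dual in evenness and negativity}. A direct computation gives
\[
\tilde{H}_{\ell_{0}+1}(p,x,\omega) = \min\bigl\{\tilde{\hat{H}}_{\ell_{0}+1}(p,x,\omega),\ \max\{\tilde{\check{H}}_{\ell_{0}+1}(p,x,\omega),\ \tilde{H}_{\ell_{0}}(p,x,\omega)\}\bigr\},
\]
where $\tilde{\hat{H}}_{\ell_{0}+1}(p) := -\check{H}_{\ell_{0}+1}(-p)$ is quasiconcave, $\tilde{\check{H}}_{\ell_{0}+1}(p) := -\hat{H}_{\ell_{0}+1}(-p)$ is quasiconvex, and $\tilde{H}_{\ell_{0}}(p) := -H_{\ell_{0}}(-p)$ is regularly homogenizable by the inductive assumption $(\mathrm{I}_{\ell_{0}}^{2})$ together with Lemma~\ref{the symmetry between Hamiltonian and its dual in evenness and negativity}. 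The right-hand side has precisely the shape of an $(\ell_{0}+\tfrac{1}{2})$-step Hamiltonian in the dual tower obtained by swapping the $\check{\ }/\hat{\ }$ roles; after the appropriate re-indexing, verification of (A1)--(A4) and the dual analog of~(\ref{the strict monotonicity condition}) is a bookkeeping exercise paralleling the one performed inside the proof of Lemma~\ref{the lower bound of the effective Hamiltonian at the inductive step}. Applying that lemma to $\tilde{H}_{\ell_{0}+1}$ at $-p$ yields a lower bound on $\liminf_{\lambda}(-\lambda \tilde{v}_{\lambda,\ell_{0}+1}(0,-p,\omega))$, and the identity $-\lambda v_{\lambda,\ell_{0}+1}(0,p,\omega) = \lambda \tilde{v}_{\lambda,\ell_{0}+1}(0,-p,\omega)$ extracted from the proof of Lemma~\ref{the symmetry between Hamiltonian and its dual in evenness and negativity} translates this into the desired upper bound.

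On the remaining flat region $(A_{1} \cup A_{2})^{c}$ I would follow Lemma~\ref{the upper bound for the flat piece the base case}: introduce the perturbations $\check{H}_{\ell_{0}+1}^{\kappa} := \check{H}_{\ell_{0}+1} + \kappa(\overline{\mathrm{m}}_{\ell_{0}+1} - \mathrm{m}_{\ell_{0}+1})$ and $\hat{H}_{\ell_{0}+1}^{\kappa} := \hat{H}_{\ell_{0}+1} + \kappa(\overline{\mathrm{m}}_{\ell_{0}+1} - \mathrm{m}_{\ell_{0}+1})$ for $\kappa \in [0,1]$, together with the corresponding perturbed full Hamiltonian $H_{\ell_{0}+1}^{\kappa} \geq H_{\ell_{0}+1}$. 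The continuity and monotonicity of $\kappa \mapsto \overline{\ddot{H}_{\ell_{0}+1}^{\kappa}}(p)$, combined with the inductive analog of Lemma~\ref{the equivalence of minimum level sets for convex and concave Hamiltonians} (whose metric-subsolution proof transfers verbatim with $\check{H}_{\ell_{0}+1},\hat{H}_{\ell_{0}+1}$ in place of $\check{H}_{1},\hat{H}_{1}$), provide for each $p$ in the flat region some $\kappa(p) \in [0,1]$ and $\ddot{\ } \in \{\check{\ },\hat{\ }\}$ with $\overline{\ddot{H}_{\ell_{0}+1}^{\kappa(p)}}(p) = \overline{\mathrm{m}}_{\ell_{0}+1}$; applying the first two regions to $H_{\ell_{0}+1}^{\kappa(p)}$ gives $\overline{H_{\ell_{0}+1}^{\kappa(p)}}(p) = \overline{\mathrm{m}}_{\ell_{0}+1}$, and the comparison $H_{\ell_{0}+1} \leq H_{\ell_{0}+1}^{\kappa(p)}$ delivers the bound $\overline{\mathrm{m}}_{\ell_{0}+1}$. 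The only genuinely new work is the dual reformulation in the $A_{2}$ region: the re-indexing of the $\check{\ }/\hat{\ }$ tower, the identification of $\overline{\mathrm{m}}_{\ell_{0}+1}$ and $\underline{\mathrm{M}}_{\ell_{0}+1}$ with their dual counterparts, and the verification of the dual assumptions, all of which parallel the computations already performed in the proof of Lemma~\ref{the lower bound of the effective Hamiltonian at the inductive step}.
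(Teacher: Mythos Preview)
Your three-region decomposition is unnecessarily elaborate, and the flat-region part has a genuine gap. The paper's proof is a four-line argument parallel to that of Lemma~\ref{the lower bound of the effective Hamiltonian at the inductive step}: introduce the auxiliary Hamiltonians
\[
\mathcal{H}_{\ell_{0}+1}:=\max\bigl\{\check{H}_{\ell_{0}+1},\min\{\hat{H}_{\ell_{0}+1},\ldots,\max\{\check{H}_{2},\hat{H}_{2}\}\ldots\}\bigr\},
\qquad
\scalebox{1.1}{\Pfund}_{\ell_{0}+1}:=\max\{\check{H}_{1},\hat{H}_{1}\},
\]
observe the pointwise bounds $H_{\ell_{0}+1}\le\mathcal{H}_{\ell_{0}+1}$ and $H_{\ell_{0}+1}\le\scalebox{1.1}{\Pfund}_{\ell_{0}+1}$, apply the inductive hypothesis $(\mathrm{I}_{\ell_{0}})$ to the first (indices $2,\ldots,\ell_{0}+1$) and Proposition~\ref{the result at the base case} to the second, and conclude by checking via (\ref{the monotonicity condition}) that $\min\{\overline{\mathcal{H}_{\ell_{0}+1}},\overline{\scalebox{1.1}{\Pfund}_{\ell_{0}+1}}\}$ equals the desired maximum. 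No region splitting, no $\kappa$-perturbation, no explicit duality.

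Your $A_{2}$ duality idea is in fact the same argument in disguise: unwinding the re-indexing you allude to forces one to split off the innermost Hamiltonian (there are $2(\ell_{0}+1)$ building blocks in $\tilde{H}_{\ell_{0}+1}$, one more than an $(\ell_{0}+\tfrac{1}{2})$-shape accommodates), which is exactly the $\mathcal{H}_{\ell_{0}+1}$/$\scalebox{1.1}{\Pfund}_{\ell_{0}+1}$ split above. Once carried out, this yields the bound for \emph{all} $p$, so $A_{1}$ and the flat region become redundant. Your flat-region argument, on the other hand, does not close as written: the region $(A_{1}\cup A_{2})^{c}$ is defined through $\overline{H_{\ell_{0}+\frac{1}{2}}}$, whereas the $\kappa$-sweep only covers the set $\{\overline{\check{H}_{\ell_{0}+1}}\le\overline{\mathrm{m}}_{\ell_{0}+1}\}\cap\{\overline{\hat{H}_{\ell_{0}+1}}\le\overline{\mathrm{m}}_{\ell_{0}+1}\}$, which can be strictly smaller since $\overline{H_{\ell_{0}+\frac{1}{2}}}\le\overline{\hat{H}_{\ell_{0}+1}}$; and the step ``applying the first two regions to $H_{\ell_{0}+1}^{\kappa}$'' tacitly requires $(\mathrm{I}_{\ell_{0}})$ and (A4) for the partially shifted tower (you shift $\check{H}_{\ell_{0}+1},\hat{H}_{\ell_{0}+1}$ but not $\check{H}_{\ell_{0}}$), which need not survive the perturbation.
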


\begin{proof}
	Let us denote
	\begin{equation*}
	\mathcal{H}_{\ell_{0} + 1} := \max\left\lbrace \check{H}_{\ell_{0} + 1}, \min\left\lbrace \hat{H}_{\ell_{0} + 1}, \cdots \max\left\lbrace \check{H}_{2},\hat{H}_{2} \right\rbrace  \cdots\right\rbrace  \right\rbrace 
	\end{equation*}
	Then, we can apply the inductive assumption (\ref{the combined assumptions at the intermediate inductive step}) to $\ell_{0}$ quasiconcave Hamiltonians $\left\lbrace \hat{H}_{i}(p,x,\omega)\right\rbrace_{i = 2}^{\ell_{0} + 1}$ and $\ell_{0}$ quasiconvex Hamiltonian $\left\lbrace \check{H}_{j}(p,x,\omega)\right\rbrace_{j = 2}^{\ell_{0} + 1}$. This induces that
	\begin{equation*}
	\overline{\mathcal{H}_{\ell_{0} + 1}} = \max\left\lbrace \overline{\check{H}_{\ell_{0} + 1}}, \overline{\mathrm{m}}_{\ell_{0} + 1}, \min\left\lbrace \overline{\check{H}_{\ell_{0} + 1}}, \overline{\mathrm{M}}_{\ell_{0} + 1},\cdots, \max\left\lbrace \overline{\check{H}_{2}}, \overline{\mathrm{m}}_{2}, \overline{\hat{H}_{2}}\right\rbrace \cdots \right\rbrace \right\rbrace 
	\end{equation*}
	Let us also denote
	\begin{equation*}
	\scalebox{1.2}{\Pfund}_{\ell_{0} + 1}(p,x,\omega) := \max\left\lbrace \check{H}_{1}(p,x,\omega), \hat{H}_{1}(p,x,\omega)\right\rbrace 
	\end{equation*}
	From the Proposition \ref{the result at the base case}, we get that
	\begin{equation*}
	\overline{\scalebox{1.2}{\Pfund}_{\ell_{0} + 1}}(p) = \max\left\lbrace \overline{\check{H}_{1}}(p),\overline{\mathrm{m}}_{1},\overline{\hat{H}_{1}}(p)\right\rbrace 
	\end{equation*}
	Since we have the ordering relations
	\begin{eqnarray*}
	H_{\ell_{0} + 1}(p,x,\omega) \leq \mathcal{H}_{\ell_{0} + 1}(p,x,\omega) &\text{and}& H_{\ell_{0} + 1}(p,x,\omega) \leq \scalebox{1.2}{\Pfund}_{\ell_{0} + 1}(p,x,\omega)
	\end{eqnarray*}
	By recalling (\ref{the monotonicity condition}), we have that
	\begin{eqnarray*}
	\limsup_{\lambda\rightarrow 0}-\lambda v_{\lambda,\ell_{0} + 1}(0,p,\omega) &\leq& \min\left\lbrace \overline{\mathcal{H}_{\ell_{0} + 1}}(p), \overline{\scalebox{1.2}{\Pfund}_{\ell_{0} + 1}}(p)\right\rbrace\\
	&=& \max\left\lbrace \overline{\check{H}_{\ell_{0} + 1}}(p), \overline{\mathrm{m}}_{\ell_{0} + 1}, \overline{H_{\ell_{0} + \frac{1}{2}}}(p)\right\rbrace
	\end{eqnarray*}
\end{proof}

\subsection{A summary}

\begin{proposition}\label{a summary of the regularly homogenizability}
	Fix a positive interger $\ell$, assume (A1) - (A4) and (\ref{the monotonicity condition}), let $H_{\ell}(p,x,\omega)$ be the Hamiltonian defined through (\ref{the derivation of the nonconvex Hamiltonian through minmax formula}) - (\ref{the monotone ordering of Hamiltonians}), then $H_{\ell}(p,x,\omega)$ is regularly homogenizable for all $p\in\RR^{d}$. Moreover, the effective Hamiltonian $\overline{H_{\ell}}(p)$ has the following expression.
	\begin{equation*}
	\overline{H_{\ell}} = \max\left\lbrace \overline{\check{H}_{\ell}}, \overline{\mathrm{m}}_{\ell}, \min\left\lbrace \overline{\hat{H}_{\ell}}, \underline{\mathrm{M}}_{\ell}, \cdots, \max\left\lbrace \overline{\check{H}_{2}}, \overline{\mathrm{m}}_{2}, \min\left\lbrace \overline{\hat{H}_{2}}, \underline{\mathrm{M}}_{2}, \max\left\lbrace \overline{\check{H}_{1}}, \overline{\mathrm{m}}_{1},\overline{\hat{H}_{1}} \right\rbrace \right\rbrace\right\rbrace\cdots \right\rbrace  \right\rbrace
	\end{equation*}
\end{proposition}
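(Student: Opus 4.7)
The plan is to proceed by induction on $\ell$, combining the two half-step induction propositions already established in Section \ref{the inductive steps}, after first using the approximation lemma to pass from the monotonicity condition (\ref{the monotonicity condition}) to its strict version (\ref{the strict monotonicity condition}).

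First I would reduce to the strict monotonicity case. Given $\epsilon > 0$, by Lemma \ref{the monotonicity condition can be replaced by the strict monotonicity condition by approximation} there exists a Hamiltonian $H_{\ell}^{(\epsilon)}(p,x,\omega)$ satisfying (A1)--(A4) and (\ref{the strict monotonicity condition}), with $\lVert H_{\ell} - H_{\ell}^{(\epsilon)}\rVert_{L^{\infty}} < \epsilon$. Since the $L^\infty$-perturbation is uniform in $(p,x,\omega)$, the hypothesis of Lemma \ref{the stability of homogenization} applies: if each $H_{\ell}^{(\epsilon)}$ is regularly homogenizable at every $p$ with effective Hamiltonian having the claimed expression, then passing $\epsilon\to 0$ yields the same conclusion for $H_{\ell}$ (the continuity of $\overline{\ddot{H}_i}$, $\overline{\mathrm{m}}_i$ and $\underline{\mathrm{M}}_i$ under the perturbation will ensure the formula for $\overline{H_\ell}$ is preserved in the limit). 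Thus it suffices to prove the proposition under (\ref{the strict monotonicity condition}).

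I would then induct on $\ell$. The base case $\ell = 1$ is precisely Proposition \ref{the result at the base case}, which gives both regular homogenizability and the explicit formula $\overline{H_{1}}(p) = \max\{\overline{\check{H}_{1}}(p), \overline{\mathrm{m}}_{1}, \overline{\hat{H}_{1}}(p)\}$. For the inductive step, suppose the result holds for some $\ell_0 \geq 1$; that is, assume $(\mathrm{I}_{\ell_0}^{1})$ and $(\mathrm{I}_{\ell_0}^{2})$, which is exactly (\ref{the combined assumptions at the intermediate inductive step}). Proposition \ref{the result at the intermediate inductive step} then yields regular homogenizability of the half-step Hamiltonian $H_{\ell_0 + \frac{1}{2}}(p,x,\omega)$ with
\[
\overline{H_{\ell_0+\frac{1}{2}}}(p) = \min\bigl\{\overline{\hat{H}_{\ell_0+1}}(p),\, \underline{\mathrm{M}}_{\ell_0+1},\, \overline{H_{\ell_0}}(p)\bigr\}.
\]
Feeding this into Proposition \ref{the result at the intermediate inductive step in the second half part} promotes regular homogenizability to $H_{\ell_0+1}(p,x,\omega)$ and produces
\[
\overline{H_{\ell_0+1}}(p) = \max\bigl\{\overline{\check{H}_{\ell_0+1}}(p),\, \overline{\mathrm{m}}_{\ell_0+1},\, \overline{H_{\ell_0+\frac{1}{2}}}(p)\bigr\},
\]
and substituting the previous display yields the nested max-min expression for $\overline{H_{\ell_0+1}}(p)$ required by the proposition's statement.

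There is no serious obstacle at this stage, since all the analytic heavy lifting — the one-sided decomposition via the constrained subsolutions and supersolutions of Lemmas \ref{subsolutions with constraint supergradient level set convex}--\ref{supersolutions with constraint subgradient level set concave}, the symmetry arguments of Lemmas \ref{the symmetry between Hamiltonian and its dual in evenness and negativity}--\ref{the symmetry of effective Hamiltonians in evenness}, and the role of (\ref{the monotonicity condition}) in producing the matching upper/lower bounds — has already been packaged into the two intermediate propositions. The only point requiring care is the limiting step $\epsilon\to 0$: one must verify that the contact values $\overline{\mathrm{m}}_{k}$ and $\underline{\mathrm{M}}_{k}$ for $H_{\ell}^{(\epsilon)}$ converge to those of $H_{\ell}$, and that the effective Hamiltonians $\overline{\ddot{H}_{i}^{(\epsilon)}}$ converge to $\overline{\ddot{H}_{i}}$; both follow from the quantitative $L^{\infty}$-closeness supplied by Lemma \ref{the monotonicity condition can be replaced by the strict monotonicity condition by approximation} together with Lemma \ref{the stability of homogenization}.
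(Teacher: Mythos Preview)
Your proposal is correct and follows essentially the same inductive route as the paper: base case via Proposition \ref{the result at the base case}, then the two half-step Propositions \ref{the result at the intermediate inductive step} and \ref{the result at the intermediate inductive step in the second half part} to pass from $\ell_0$ to $\ell_0+1$. You are in fact more explicit than the paper on one point: the inductive propositions assume the strict condition (\ref{the strict monotonicity condition}), so the reduction from (\ref{the monotonicity condition}) to (\ref{the strict monotonicity condition}) via Lemma \ref{the monotonicity condition can be replaced by the strict monotonicity condition by approximation} and the stability Lemma \ref{the stability of homogenization} is genuinely needed, and you spell it out while the paper's short proof leaves it implicit.
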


\begin{proof}
	If $\ell = 1$, the result follows from the Proposition \ref{the result at the base case}. Suppose the result holds for a fixed positive integer $\ell = \ell_{0}$, then according to the Proposition \ref{the result at the intermediate inductive step} and the Proposition \ref{the result at the intermediate inductive step in the second half part}, the result also holds for $\ell = \ell_{0} + 1$. By the inductive argument, the result holds for any positive integer $\ell$.
\end{proof}



\section{Homogenization}
The goal of this section is to establish that if a Hamiltonian $H(p,x,\omega)$, under the assumptions of (A1) - (A3), is regularly homogenizable for all $p\in\RR^{d}$, then the stochastic homogenization holds. The proof is based on certain regularity result of Hamilton-Jacobi equation and a variant of the perturbed test function method \cite{Evans PRSE} (see also \cite{Davini and Siconolfi MA, Armstrong and Souganidis JMPA} for similar arguments).

\begin{lemma}\label{locally uniformly convergence}
	Let $H(p,x,\omega): \RR^{d} \times \RR^{d} \times \Omega \rightarrow \RR$ be a coercive Hamiltonian that satisfies (A3) and let $u_{0}(x) : \RR^{d} \rightarrow \RR$ be a bounded Lipschitz function. For any $(\epsilon,\omega)\in (0,1)\times\Omega$, let $u^{\epsilon}(x,t,\omega)$ be the viscosity solution of the equation (\ref{the rescaled HJ equation}) with $u_{0}(x)$ as its initial condition. Then for any $\omega\in\Omega$ and any sequence $J = \left\lbrace \epsilon_{j}\right\rbrace_{j = 1}^{\infty}$ with $\lim_{j\rightarrow\infty}\epsilon_{j} = 0$, there exists a subsequence $\left\lbrace \epsilon_{j_{k}}\right\rbrace_{k = 1}^{\infty}$, such that $u^{\epsilon_{j_{k}}}(x,t,\omega)$, as $k\rightarrow\infty$, converges locally uniformly in $\RR^{d}\times (0,\infty)$.
\end{lemma}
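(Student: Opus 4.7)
The plan is to apply Arzel\`a-Ascoli: for fixed $\omega\in\Omega$, we show that the family $\{u^{\epsilon}(\cdot,\cdot,\omega)\}_{\epsilon\in(0,1)}$ is uniformly bounded and equicontinuous on every compact subset of $\RR^{d}\times(0,\infty)$, and then extract a convergent subsequence from any given $J=\{\epsilon_{j}\}$ with $\epsilon_{j}\to 0$ by a standard diagonal argument along the exhaustion $\overline{B_{n}}\times[1/n,n]$.

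Fix $\omega\in\Omega$ and set $L:=\mathrm{Lip}(u_{0})$, $M_{0}:=\|u_{0}\|_{\infty}$. By the coercivity of $H$ combined with the local boundedness in (A3),
\begin{equation*}
C_{L}:=\sup\{|H(p,y,\omega)|:|p|\leq L,\; y\in\RR^{d}\}<\infty.
\end{equation*}
For each base point $y_{0}\in\RR^{d}$ the cone functions $W_{y_{0}}^{\pm}(x,t):=u_{0}(y_{0})\pm L|x-y_{0}|\pm C_{L}t$ satisfy $|DW_{y_{0}}^{\pm}|=L$ where differentiable, so $|H(DW_{y_{0}}^{\pm},x/\epsilon,\omega)|\leq C_{L}$; consequently $W_{y_{0}}^{+}$ is a viscosity supersolution and $W_{y_{0}}^{-}$ is a viscosity subsolution of the rescaled equation. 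Since $W_{y_{0}}^{-}(\cdot,0)\leq u_{0}\leq W_{y_{0}}^{+}(\cdot,0)$ by Lipschitz continuity of $u_{0}$, the comparison principle yields $W_{y_{0}}^{-}(x,t)\leq u^{\epsilon}(x,t,\omega)\leq W_{y_{0}}^{+}(x,t)$; choosing $y_{0}=x$ produces $|u^{\epsilon}(x,t,\omega)-u_{0}(x)|\leq C_{L}t$, which in particular bounds $u^{\epsilon}$ in $L^{\infty}(\RR^{d}\times[0,T])$ by $M_{0}+C_{L}T$.

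For spatial equi-Lipschitz regularity, we invoke the standard preservation of the Lipschitz constant along the viscosity flow of a coercive Hamilton-Jacobi equation, namely $\mathrm{Lip}_{x}\,u^{\epsilon}(\cdot,t,\omega)\leq L$ uniformly in $\epsilon\in(0,1)$ and $t\geq 0$. The proof is by doubling of variables applied to
\begin{equation*}
\Phi_{\sigma,\eta}(x,y,t):=u^{\epsilon}(x,t)-u^{\epsilon}(y,t)-L|x-y|-\sigma(|x|^{2}+|y|^{2})-\frac{\eta}{T-t};
\end{equation*}
assuming $\sup\Phi_{\sigma,\eta}>0$, the supremum is attained at some $(x^{*},y^{*},t^{*})$ with $x^{*}\neq y^{*}$ and $t^{*}\in(0,T)$, and the viscosity sub-/super-solution inequalities at $(x^{*},t^{*})$ and $(y^{*},t^{*})$, combined with the continuity of $H$ from (A3), produce a contradiction as $\sigma,\eta\downarrow 0$. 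Once spatial Lipschitz continuity with constant $L$ is in hand, the equation $u^{\epsilon}_{t}=-H(Du^{\epsilon},x/\epsilon,\omega)$ forces $|u^{\epsilon}_{t}|\leq C_{L}$ in the viscosity sense, yielding Lipschitz continuity in $t$ with constant $C_{L}$.

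These bounds verify the hypotheses of Arzel\`a-Ascoli on every compact $\overline{B_{R}}\times[\tau,T]\subset\RR^{d}\times(0,\infty)$, and the diagonal extraction then furnishes the claimed locally uniformly convergent subsequence. The main obstacle is the spatial Lipschitz preservation step: direct use of the cone barriers gives only $|u^{\epsilon}(x,t)-u^{\epsilon}(y,t)|\leq L|x-y|+2C_{L}t$, which is not equi-continuous at a fixed $t>0$; removing the additive $2C_{L}t$ error requires the doubling-of-variables argument sketched above, and this is where the coercivity and the continuity modulus in (A3) enter in an essential way.
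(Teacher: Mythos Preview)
Your overall strategy (uniform bounds $+$ Arzel\`a--Ascoli $+$ diagonal extraction) matches the paper's, and your barrier argument giving $|u^{\epsilon}(x,t)-u_{0}(x)|\le C_{L}t$ is fine. The gap is in the spatial step: the claim that the initial Lipschitz constant $L$ is \emph{preserved} along the flow, uniformly in $\epsilon$, is false for $x$-dependent Hamiltonians, and your doubling-of-variables sketch does not close. At an interior maximum $(x^{*},y^{*},t^{*})$ of $\Phi_{\sigma,\eta}$ with $x^{*}\neq y^{*}$, the two viscosity inequalities compare $H(p^{*},x^{*}/\epsilon,\omega)$ and $H(q^{*},y^{*}/\epsilon,\omega)$ with $p^{*}\approx q^{*}$ but at \emph{different} space points; after the rescaling, the modulus from (A3) gives only $\rho(|p^{*}-q^{*}|+|x^{*}-y^{*}|/\epsilon)$, which is useless as $\epsilon\to 0$. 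A concrete counterexample: $d=1$, $H(p,x)=p^{2}+\sin x$, $u_{0}\equiv 0$ (so $L=0$). Your claim would force $u^{\epsilon}$ to be constant in $x$, but $u^{\epsilon}_{t}+(u^{\epsilon}_{x})^{2}=-\sin(x/\epsilon)$ obviously has no $x$-constant solution.

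The paper avoids this by reversing the order: it first obtains the time-Lipschitz bound directly. From $|u^{\epsilon}(\cdot,h)-u_{0}|\le Kh$ and the $L^{\infty}$-contraction of the semigroup (comparing $u^{\epsilon}(\cdot,\cdot+h)$ with $u^{\epsilon}$), one gets $|u^{\epsilon}(x,t+h)-u^{\epsilon}(x,t)|\le Kh$, hence $|u^{\epsilon}_{t}|\le K$. Feeding this into the equation yields $|H(Du^{\epsilon},x/\epsilon,\omega)|\le K$, and \emph{now} coercivity gives a uniform spatial Lipschitz bound $|Du^{\epsilon}|\le R$ for some $R=R(K)$---in general $R>L$, as in the example above where $R=\sqrt{2}$ while $L=0$. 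This route never compares $H$ at two different rescaled space points, which is why it is insensitive to $\epsilon$.
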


\begin{proof}
	Let $L$ be the Lipschitz constant of $u_{0}(x)$, then by the Definition \ref{the subdifferential and the superdifferential}, both of $D^{+}u_{0}(x)$ and $D^{-}u_{0}(x)$, $x\in\RR^{d}$ are bounded by $B_{L}(0)$. According to the assumption (A3), let us denote
	\begin{eqnarray*}
	K := \esssup\limits\limits_{(p,x,\omega)\in B_{L}(0)\times\RR^{d}\times\Omega}\left| H(p,x,\omega)\right| &\text{ and }& u^{\pm}(x,t,\omega) := u_{0}(x) \pm Kt 
	\end{eqnarray*}
	Then $u^{+}$ (resp. $u^{-}$) is a supersolution (resp. subsolution) of the equation (\ref{the rescaled HJ equation}). The usual comparison principle (c.f. \cite{Crandall Ishii Lions BAMS}) implies that 
	\begin{eqnarray*}
	\left| u^{\epsilon}(x,t,\omega) - u_{0}(x)\right| \leq Kt, && (x,t,\omega)\in\RR^{d}\times [0,\infty)\times\Omega
	\end{eqnarray*}
	For any $0 < t_{1} < t_{2} < \infty$, the usual comparison principle applied to $u^{\epsilon}(x,t + t_{2} - t_{1},\omega)$ and $u^{\epsilon}(x,t,\omega)$ implies that
	\begin{eqnarray*}
	\sup_{x\in\RR^{d}}\left| u^{\epsilon}(x,t + t_{2} - t_{1},\omega) - u^{\epsilon}(x,t,\omega)\right| &\leq& \sup_{x\in\RR^{d}}\left| u^{\epsilon}(x,t_{2} - t_{1},\omega) - u^{\epsilon}(x,0,\omega) \right|\\
	&\leq& K\left| t_{2} - t_{1}\right|  
	\end{eqnarray*}
	Therefore, $\left| u_{t}^{\epsilon}(x,t,\omega)\right| \leq K$. Then, by the coercivity of $H(p,x,\omega)$, let us denote
	\begin{equation*}
	R := \max \left\lbrace 0 < r < \infty \Big|p \in B_{r}(0), \essinf\limits\limits_{(x,\omega)\in\RR^{d}\times\Omega} H(p,x,\omega) \leq K\right\rbrace 
	\end{equation*}
	Then we get that $\left| Du^{\epsilon}(x,t,\omega)\right| \leq R$, independent of $\epsilon$. So for any $\omega\in\Omega$ and any $T > 0$, $\left\lbrace u^{\epsilon}(x,t,\omega) \right\rbrace_{0 < \epsilon < 1}$ is uniformly bounded and equicontinuous on $\RR^{d}\times [0,T]$. The conclusion follows from the Arzel$\grave{a}$-Ascoli theorem.
\end{proof}

\begin{lemma}\label{a variant of the perturbed test function method}
	Let $H(p,x,\omega): \RR^{d} \times \RR^{d} \times \Omega \rightarrow \RR$ be a coercive continuous Hamiltonian that is regularly homogenizable for all $p\in\RR^{d}$ with the effective Hamiltonian $\overline{H}(p)$. For any $(\epsilon,\omega)\in (0,1)\times\Omega$, let $u^{\epsilon}(x,t,\omega)$ be the viscosity solution of the equation (\ref{the rescaled HJ equation}). Suppose for a sequence $J = \left\lbrace \epsilon_{j} = \epsilon_{j}(\omega)\right\rbrace_{j = 1}^{\infty}$ with $\lim_{j\rightarrow\infty}\epsilon_{j} = 0$, such that $\lim_{j\rightarrow\infty}u^{\epsilon_{j}}(x,t,\omega) = \overline{u}(x,t,\omega)$, locally uniformly in $\RR^{d}\times (0,\infty)$. Then $\overline{u}(x,t,\omega)$ is the solution of the homogenized Hamilton-Jacobi equation (\ref{the homogenized HJ equation}). In particular, $\overline{u}$ is independent of $\omega$.
\end{lemma}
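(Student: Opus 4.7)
The plan is to use a variant of Evans' perturbed test function method \cite{Evans PRSE}, employing the solution $v_\lambda(\cdot,p_0,\omega)$ of the auxiliary macroscopic problem (\ref{the auxiliary macroscopic problem}) in place of a periodic corrector. Since $\overline{H}$ is continuous on $\mathbb{R}^d$ by Corollary \ref{the closedness of reguarly homogenizable points} and is coercive, the homogenized equation (\ref{the homogenized HJ equation}) admits a unique viscosity solution for the $\mathrm{BUC}$ initial datum $u_0$ via the standard comparison principle. Consequently, if I can show that $\overline{u}(\cdot,\cdot,\omega)$ is a viscosity solution of (\ref{the homogenized HJ equation}), uniqueness forces $\overline{u}$ to coincide with the (deterministic) solution, and hence to be independent of $\omega$. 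The initial trace $\overline{u}(\cdot,0,\omega)=u_0$ is already encoded in the $\epsilon$-independent estimate $|u^\epsilon(x,t,\omega)-u_0(x)|\leq Kt$ established inside Lemma \ref{locally uniformly convergence}, so the crux is to verify the subsolution and supersolution inequalities in $\mathbb{R}^d\times(0,\infty)$.

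I will prove the subsolution property by contradiction. Suppose $\phi\in C^1$ and $(x_0,t_0)\in\mathbb{R}^d\times(0,\infty)$ are such that $\overline{u}-\phi$ has a strict local maximum at $(x_0,t_0)$ while $\phi_t(x_0,t_0)+\overline{H}(p_0)\geq 3\theta>0$, where $p_0:=D\phi(x_0,t_0)$. Construct the perturbed test function
\[
\phi^\epsilon(x,t) := \phi(x,t) + \epsilon\, v_\lambda(x/\epsilon,p_0,\omega), \qquad \lambda=\lambda(\epsilon):=\epsilon,
\]
and fix a small ball $\overline{B_r(x_0,t_0)}$ on which $\phi_t+\overline{H}(D\phi)\geq 2\theta$. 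The inclusion $B_r(x_0)/\epsilon\subset B_{R/\epsilon}(0)$ for $R:=|x_0|+r+1$ matches the scale in the definition of regular homogenizability, so for $\omega$ in a full-measure event we have $\epsilon v_\epsilon(x/\epsilon,p_0,\omega)\to-\overline{H}(p_0)$ uniformly on $\overline{B_r(x_0)}$. Combined with the locally uniform convergence $u^{\epsilon_j}\to\overline{u}$, this implies that any maximizer $(x_\epsilon,t_\epsilon)$ of $u^\epsilon-\phi^\epsilon$ on $\overline{B_r(x_0,t_0)}$ converges to $(x_0,t_0)$.

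Because $v_\lambda$ is only a viscosity solution, it is not permissible to differentiate $\phi^\epsilon$ directly; I therefore double the space variable. For small $\eta>0$ let $(x_\eta,y_\eta,t_\eta)$ maximize
\[
\Phi_\eta(x,y,t):=u^\epsilon(x,t)-\phi(x,t)-\epsilon v_\lambda(y/\epsilon,p_0,\omega)-\frac{|x-y|^2}{2\eta}
\]
over $\overline{B_r(x_0,t_0)}\times\overline{B_r(x_0)}$. Standard viscosity arguments yield $(x_\eta,t_\eta),(y_\eta,t_\eta)\to(x_\epsilon,t_\epsilon)$ as $\eta\to 0$, $|x_\eta-y_\eta|^2/\eta\to 0$, and the common penalty gradient $q_\eta:=(x_\eta-y_\eta)/\eta$ remains bounded (thanks to the coercivity in $(A2)$). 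Testing $u^\epsilon$ as a subsolution at $(x_\eta,t_\eta)$ against $\phi+|x-y_\eta|^2/(2\eta)+\mathrm{const}$ and testing $y\mapsto v_\lambda(y/\epsilon,p_0,\omega)$ as a supersolution at $y_\eta$ against $-|x_\eta-y|^2/(2\eta)+\mathrm{const}$, then subtracting, gives
\[
\phi_t(x_\eta,t_\eta)+H\bigl(D\phi(x_\eta,t_\eta)+q_\eta,\,x_\eta/\epsilon,\omega\bigr)-H\bigl(p_0+q_\eta,\,y_\eta/\epsilon,\omega\bigr)\leq -\lambda v_\lambda(y_\eta/\epsilon,p_0,\omega).
\]
Passing $\eta\to 0$ along a sequence with $\eta\ll\epsilon^2$ so that $|x_\eta-y_\eta|/\epsilon\to 0$, the modulus of continuity in $(A3)$ collapses the $H$-difference, and then letting $\epsilon\to 0$ and invoking regular homogenizability to send $\lambda v_\lambda(x_\epsilon/\epsilon,p_0,\omega)\to-\overline{H}(p_0)$, I obtain $\phi_t(x_0,t_0)+\overline{H}(p_0)\leq 0$, contradicting the assumption $\geq 3\theta$. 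The supersolution half is obtained by the symmetric doubling, switching the roles of sub- and supersolution.

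The main obstacle is the simultaneous calibration of the three scales $\epsilon$, $\lambda(\epsilon)$, and $\eta(\epsilon)$: regular homogenizability forces $\lambda$ to be comparable to $\epsilon$ (so that the relevant ball $|x_0|/\epsilon$ sits inside $B_{R/\lambda}(0)$), and the doubling demands $\eta$ small enough relative to $\epsilon$ that $|x_\eta/\epsilon-y_\eta/\epsilon|\to 0$ even though $|x_\eta-y_\eta|/\eta$ is only bounded. The non-smoothness of $v_\lambda$ precludes a direct chain rule in the perturbed test function and is precisely what makes the doubling step necessary; once these calibrations are in place, subtracting the two viscosity inequalities and invoking the modulus of continuity from $(A3)$ together with the regular homogenizability identity makes the final limit routine.
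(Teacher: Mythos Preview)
Your plan is correct and uses the same core idea as the paper: the perturbed test function $\phi^\epsilon=\phi+\epsilon v_{\epsilon}(\cdot/\epsilon,p_0,\omega)$ with $v_\lambda$ playing the role of an approximate corrector, followed by a contradiction with the strict local maximum. The technical implementation, however, differs. You handle the non-smoothness of $v_\lambda$ by doubling the space variable and passing to limits in $\eta$ and then $\epsilon$. The paper avoids doubling entirely: since $\varphi$ is smooth, any smooth $\psi$ touching $\phi^{\epsilon_j}$ from below at $(x_1,t_1)$ yields, after the change of variables $y=x/\epsilon_j$, a smooth test function $(\psi-\varphi)(\epsilon_j\,\cdot,t_1)/\epsilon_j$ touching $v_{\epsilon_j}$ from below; the supersolution property of $v_{\epsilon_j}$ then gives directly that $\phi^{\epsilon_j}$ is a viscosity supersolution of $\phi_t+H(D\phi,x/\epsilon_j,\omega)=\theta/2$ on a small cylinder. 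A single application of the comparison principle on that cylinder, followed by $j\to\infty$, produces the contradiction. This route is shorter here precisely because the smooth part $\varphi$ of the perturbed test function can be peeled off; your doubling argument is the more general-purpose device and works just as well.

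Two minor remarks on the write-up. First, the displayed inequality after subtraction should read $\leq \lambda v_\lambda(y_\eta/\epsilon,p_0,\omega)$ rather than $\leq -\lambda v_\lambda$; with the correct sign, $\lambda v_\lambda\to -\overline{H}(p_0)$ gives the desired $\phi_t(x_0,t_0)+\overline{H}(p_0)\leq 0$, as you state. Second, the calibration ``$\eta\ll\epsilon^2$'' is unnecessary: for fixed $\epsilon$, sending $\eta\to 0$ already forces $|x_\eta-y_\eta|\to 0$ and hence $|x_\eta-y_\eta|/\epsilon\to 0$; only afterward do you send $\epsilon\to 0$.
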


\begin{proof}
	Let us only prove that $\overline{u}$ is a subsolution of (\ref{the homogenized HJ equation}) since the property of being a supersolution can be shown similarly. Suppose on the contrary that $\overline{u}$ is not a subsolution at $(x_{0},t_{0})\in\RR^{d}\times(0,\infty)$, then there exists a smooth test function $\varphi(x,t)$ defined in a neighborhood of $(x_{0},t_{0})$, such that $\overline{u}(x,t,\omega) - \varphi(x,t)$ obtains a \textit{strict} local maximum at $(x_{0},t_{0})$, but we have that
	\begin{equation*}
	\theta := \varphi_{t}(x_{0},t_{0}) + \overline{H}(D\varphi(x_{0},t_{0})) > 0
	\end{equation*}
	Let us denote $p_{0} := \varphi(x_{0},t_{0})$, for any $\lambda > 0$, let $v_{\lambda}(x,p_{0},\omega)$ be the unique solution of the following equation.
	\begin{eqnarray*}
	\lambda v_{\lambda} + H(p_{0} + Dv_{\lambda},x,\omega) = 0, && x\in\RR^{d}
	\end{eqnarray*}
	Based on the regularly homogenizability of $H(p,x,\omega)$ at $p_{0}$, we denote that
	\begin{equation*}
	\Omega_{p_{0}} := \left\lbrace \omega\in\Omega\Big|  \limsup_{\lambda\rightarrow 0}\max_{|x|\leq\frac{R}{\lambda}} \left| \lambda v_{\lambda}(x,p_{0},\omega) + \overline{H}(p_{0})\right| = 0, \text{ for any } R > 0 \right\rbrace 
	\end{equation*}
	Then $\PP\left[ \Omega_{p_{0}}\right] = 1$. Fix any $\omega\in\Omega_{p_{0}}$, let us denote by $\phi^{\epsilon_{j}}(x,t,\omega)$ the following perturbed test function.
	\begin{equation*}
	\phi^{\epsilon_{j}}(x,t,\omega) := \varphi(x,t) + \epsilon_{j} v_{\epsilon_{j}}\left( \frac{x}{\epsilon_{j}},p_{0},\omega\right) 
	\end{equation*}
    We claim that if $\epsilon_{j}$ is sufficiently small, there is a small number $r_{0} > 0$ (to be determined later), independent of $\epsilon_{j}$, such that the following equation (\ref{the equation of the perturbed test function}) holds in viscosity sense.
    \begin{equation}\label{the equation of the perturbed test function}
    \phi_{t}^{\epsilon_{j}}(x,t,\omega) + H\left(D\phi^{\epsilon_{j}},\frac{x}{\epsilon_{j}},\omega\right) > \frac{\theta}{2}, \hspace{2mm} (x,t)\in B_{r_{0}}(x_{0})\times (t_{0} - r_{0}, t_{0} + r_{0}) \tag{P}
    \end{equation}	
    Let $\psi(x,t)$ be a smooth function and suppose that $\phi^{\epsilon_{j}}(x,t,\omega) - \psi(x,t)$ obtains a local minimum at $(x_{1},t_{1})\in B_{r_{0}}(x_{0})\times (t_{0} - r_{0}, t_{0} + r_{0})$. Equivalently,
    \begin{eqnarray*}
    y \mapsto v_{\epsilon_{j}}(y,p_{0},\omega) - \eta(y,t_{1}), &\text{ where }& \eta(y,t) := \frac{1}{\epsilon_{j}}\left( \psi(\epsilon_{j} y,t) - \varphi(\epsilon_{j}y,t)\right) 
    \end{eqnarray*}
    has a minimum at $y_{1} := \frac{x_{1}}{\epsilon_{j}}$. Then the equation of $v_{\epsilon_{j}}$ gives that
    \begin{equation*}\label{the equality based on the approximated cell problem}
    \epsilon_{j}v_{\epsilon_{j}}\left( y_{1},p_{0},\omega\right) + H\left( p_{0} + D\psi(\epsilon_{j}y_{1},t_{1}) - D\varphi(\epsilon_{j}y_{1},t_{1}),y_{1},\omega\right) \geq 0 \tag{Q}
    \end{equation*}
    By the continuity of $H(p,x,\omega)$, there exists two small numbers $r_{0} = r_{0}(p_{0},\theta,\omega) > 0$ and $\epsilon_{0} = \epsilon_{0}(r_{0},p_{0},\theta,\omega) > 0$, such that for any $(x_{1},\epsilon_{j})\in B_{r_{0}}(x_{0})\times (0,\epsilon_{0})$, we have
    \begin{equation*}
    H\left( p_{0} + D\psi(\epsilon_{j}y_{1},t_{1}) - D\varphi(\epsilon_{j}y_{1},t_{1}),y_{1},\omega\right) < H\left( D\psi(\epsilon_{j}y_{1},t_{1}), y_{1},\omega\right) + \frac{\theta}{6}
    \end{equation*}
    and
    \begin{eqnarray*}
    \left| \epsilon_{j}v_{\epsilon_{j}}\left( \frac{x_{1}}{\epsilon_{j}},p_{0},\omega\right) + \overline{H}(p_{0}) \right| < \frac{\theta}{6}, && \left|\varphi_{t}(x_{0},t_{0}) - \varphi_{t}(x_{1},t_{1}) \right| < \frac{\theta}{6} 
    \end{eqnarray*}
    Recall that
    \begin{eqnarray*}
    \overline{H}(p_{0}) = \theta - \varphi_{t}(x_{0},t_{0}) &\text{ and }& \varphi_{t}(x_{1},t_{1}) = \phi_{t}^{\epsilon_{j}}(x_{1},t_{1}) = \psi_{t}(x_{1},t_{1})
    \end{eqnarray*}
    Then (\ref{the equality based on the approximated cell problem}) implies (\ref{the equation of the perturbed test function}). Then the comparison principle applied to $u^{\epsilon_{j}}(x,t,\omega)$ and $\phi^{\epsilon_{j}}(x,t,\omega)$ on $B_{r_{0}}(x_{0})\times (t_{0} - r_{0}, t_{0} + r_{0})$ indicates that
    \begin{equation*}
    \max_{\overline{B_{r_{0}}(x_{0})}\times \left[t_{0} - r_{0}, t_{0} + r_{0}\right]}\left( u^{\epsilon_{j}} - \phi^{\epsilon_{j}}\right)(\cdot,\cdot,\omega) = \max_{\partial\left( \overline{B_{r_{0}}(x_{0})}\times \left[t_{0} - r_{0}, t_{0} + r_{0}\right]\right) }\left( u^{\epsilon_{j}} - \phi^{\epsilon_{j}}\right)(\cdot,\cdot,\omega)
    \end{equation*}
    Let us send $j\rightarrow\infty$ and recall the regularly homogenizability, then
    \begin{equation*}
    \max_{\overline{B_{r_{0}}(x_{0})}\times \left[t_{0} - r_{0}, t_{0} + r_{0}\right]}\left( \overline{u}(\cdot,\cdot,\omega) - \varphi(\cdot,\cdot)\right) = \max_{\partial\left( \overline{B_{r_{0}}(x_{0})}\times \left[t_{0} - r_{0}, t_{0} + r_{0}\right]\right) }\left( \overline{u}(\cdot,\cdot,\omega) - \varphi(\cdot,\cdot)\right) 
    \end{equation*}    
    which contradicts the fact that $\overline{u}(x,t,\omega) - \varphi(x,t)$ has a \textit{strict} maximum at $(x_{0},t_{0})$. Hence,
    \begin{equation*}
    \varphi_{t}(x_{0},t_{0}) + \overline{H}(D\varphi(x_{0},t_{0})) \leq 0
    \end{equation*}
\end{proof}

\begin{proposition}\label{the homogenization based on regularly homogenizability}
	Let $H(p,x,\omega): \RR^{d} \times \RR^{d} \times \Omega \rightarrow \RR$ be a Hamiltonian that satisfies (A1) - (A3). Assume $H(p,x,\omega)$ is regularly homogenizable for all $p\in\RR^{d}$ with the effective Hamiltonian $\overline{H}(p)$ (see the Definition \ref{the definition of regularly homogenizable}). Let $u^{\epsilon}(x,t,\omega)$ be the unique viscosity solution of the equation (\ref{the rescaled HJ equation}), then there exists an event $\widetilde{\Omega}\subseteq\Omega$, with $\PP(\widetilde{\Omega}) = 1$ and a deterministic function $\overline{u}(x,t) : \RR^{d} \times [0,\infty)\rightarrow \RR$, such that for any $\omega\in\widetilde{\Omega}$, $\lim\limits_{\epsilon\rightarrow 0}u^{\epsilon}(x,t,\omega) = \overline{u}(x,t)$, locally uniformly in $\RR^{d}\times(0,\infty)$. Moreover, $\overline{u}(x,t)$ is the unique solution of the homogenized Hamilton-Jacobi equation (\ref{the homogenized HJ equation}). 
\end{proposition}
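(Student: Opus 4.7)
The plan is to combine the compactness established in Lemma~\ref{locally uniformly convergence} with the perturbed test function identification of Lemma~\ref{a variant of the perturbed test function method}, using a countable intersection to produce a single full-measure event on which everything works. First I would reduce to Lipschitz initial data. Given $u_{0}\in\text{BUC}(\RR^{d})$, pick bounded Lipschitz approximants $u_{0}^{(k)}$ with $\lVert u_{0}^{(k)}-u_{0}\rVert_{\infty}\to 0$; by the standard comparison principle applied to (\ref{the rescaled HJ equation}), the corresponding solutions satisfy $\lVert u^{\epsilon,(k)}-u^{\epsilon}\rVert_{\infty}\le \lVert u_{0}^{(k)}-u_{0}\rVert_{\infty}$ uniformly in $\epsilon$, and the same estimate holds for the homogenized solutions. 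So once the conclusion is established for Lipschitz data, a triangle-inequality argument transfers it to BUC data.

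Next I would build the deterministic event. For each $q\in\QQ^{d}$, the regular homogenizability at $q$, combined with Proposition~\ref{the simpler version of regularly homogenizability}, furnishes a full-measure event $\Omega_{q}$ on which the approximated cell problem converges. Set
\[
\widetilde{\Omega}\;:=\;\bigcap_{q\in\QQ^{d}}\Omega_{q},
\]
which still has probability one as a countable intersection of events of full probability. Fix $\omega\in\widetilde{\Omega}$. For Lipschitz initial data, Lemma~\ref{locally uniformly convergence} says that from any sequence $\epsilon_{j}\to 0$ I can extract a subsequence along which $u^{\epsilon_{j_{k}}}(\cdot,\cdot,\omega)$ converges locally uniformly in $\RR^{d}\times(0,\infty)$ to some limit $\overline{u}(\cdot,\cdot,\omega)$.

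The core step is to identify any such subsequential limit as a viscosity solution of (\ref{the homogenized HJ equation}) via Lemma~\ref{a variant of the perturbed test function method}. The subtlety is that this lemma supplies the perturbed test function argument at a single $p_{0}$, whereas a generic test function gradient $p_{0}=D\varphi(x_{0},t_{0})$ need not lie in $\QQ^{d}$. I would handle this by a rational perturbation of the test function: given $\varphi$ with $\overline{u}-\varphi$ attaining a strict local maximum at $(x_{0},t_{0})$ and violating the subsolution inequality with excess $\theta>0$, consider for $q\in\QQ^{d}$ the modified test function $\varphi_{q}(x,t):=\varphi(x,t)+(q-p_{0})\cdot(x-x_{0})$, which has $D\varphi_{q}(x_{0},t_{0})=q$ and for $|q-p_{0}|$ small still has $\overline{u}-\varphi_{q}$ attaining a strict local maximum near $(x_{0},t_{0})$. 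By the continuity of $\overline{H}$ on $\RR^{d}$ (Corollary~\ref{the closedness of reguarly homogenizable points}), I can select $q\in\QQ^{d}$ close enough to $p_{0}$ that $\varphi_{t}(x_{0},t_{0})+\overline{H}(q)>\theta/2>0$. Applying the perturbed test function construction of Lemma~\ref{a variant of the perturbed test function method} to $\varphi_{q}$ with $p_{0}$ replaced by $q$ then yields the same contradiction on the event $\Omega_{q}\supseteq\widetilde{\Omega}$; the supersolution property is symmetric.

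Uniqueness of the viscosity solution to (\ref{the homogenized HJ equation}) then forces every subsequential limit to coincide with the unique deterministic solution $\overline{u}(x,t)$, so the full family $u^{\epsilon}(\cdot,\cdot,\omega)$ converges locally uniformly to $\overline{u}$ as $\epsilon\to 0$ for each $\omega\in\widetilde{\Omega}$. The main obstacle I anticipate is the rational perturbation step: one must check that the strict maximum of $\overline{u}-\varphi_{q}$ survives on a neighborhood large enough to run the comparison principle inside Lemma~\ref{a variant of the perturbed test function method}, uniformly as $q\to p_{0}$. This amounts to a careful but standard neighborhood-selection argument based on the strict-maximum gap of $\varphi$ and the continuity of $\overline{H}$.
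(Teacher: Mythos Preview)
Your proposal is correct and follows the same overall architecture as the paper: reduce to Lipschitz initial data by comparison, take the countable intersection $\widetilde{\Omega}=\bigcap_{q\in\QQ^{d}}\Omega_{q}$, extract subsequential limits via Lemma~\ref{locally uniformly convergence}, identify them via Lemma~\ref{a variant of the perturbed test function method}, and conclude by uniqueness.

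The one substantive difference is how the gap between $\QQ^{d}$ and $\RR^{d}$ is bridged. The paper does not perturb the test function; instead it observes that the solutions $v_{\lambda}(x,p,\omega)$ of the auxiliary problem depend continuously (indeed Lipschitz, uniformly in $\lambda$) on $p$, and $\overline{H}$ is continuous, so the convergence $\max_{|x|\le R/\lambda}|\lambda v_{\lambda}(x,p,\omega)+\overline{H}(p)|\to 0$ upgrades from $p\in\QQ^{d}$ to all $p\in\RR^{d}$ on the same event $\widetilde{\Omega}$. Lemma~\ref{a variant of the perturbed test function method} then applies verbatim with $p_{0}=D\varphi(x_{0},t_{0})$. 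Your route instead freezes the event at rational $q$ and shifts the test function by a linear term to force the gradient into $\QQ^{d}$; this works, but as you note it requires tracking the drifting location of the strict maximum and re-verifying the neighborhood estimates inside Lemma~\ref{a variant of the perturbed test function method}. The paper's approach buys a cleaner invocation of that lemma at the cost of one extra continuity observation about $v_{\lambda}$; your approach avoids that observation but pays for it with the perturbation bookkeeping. Either is acceptable.
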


\begin{proof}
	For any $(p,\lambda,\omega) \in \RR^{d}\times (0,1)\times\Omega$, let $v_{\lambda}(x,p,\omega)$ be the unique viscosity solution of the following equation.
	\begin{eqnarray*}
	\lambda v_{\lambda}(x,p,\omega) + H(p + Dv_{\lambda}(x,p,\omega),x,\omega) = 0, && x\in\RR^{d}
	\end{eqnarray*}
	As in the Lemma \ref{a variant of the perturbed test function method}, let us denote
	\begin{equation*}
	\Omega_{p} := \left\lbrace \omega\in\Omega\Big|  \limsup_{\lambda\rightarrow 0}\max_{|x|\leq\frac{R}{\lambda}} \left| \lambda v_{\lambda}(x,p,\omega) + \overline{H}(p)\right| = 0, \text{ for any } R > 0 \right\rbrace 
	\end{equation*}
	We set $\widetilde{\Omega} := \bigcap\limits_{p\in\QQ^{d}}\Omega_{p}$.
	Then, by the continuous dependence of $v_{\lambda}(x,p,\omega)$ and $\overline{H}(p)$ on $p$, we have that for any $(p,\omega)\in\RR^{d}\times\widetilde{\Omega}$ that
	\begin{eqnarray*}
	\limsup_{\lambda\rightarrow 0}\max_{|x|\leq\frac{R}{\lambda}} \left| \lambda v_{\lambda}(x,p,\omega) + \overline{H}(p)\right| = 0, &\text{for any}& R > 0
	\end{eqnarray*}
	Because $u_{0}(x) : \RR^{d} \rightarrow \RR$ is bounded uniformly continuous, there exists a family of bounded Lipschitz functions $\left\lbrace u_{0}^{n}(x)\right\rbrace_{n = 1}^{\infty}$, such that
	\begin{eqnarray*}
	\lim_{n\rightarrow\infty}u_{0}^{n}(x) = u_{0}(x), &\text{uniformly in}& \RR^{d}
	\end{eqnarray*} 
	For each $n$, let us denote by $u^{\epsilon,n}(x,t)$ the solution of the following equation.
    \begin{equation}\label{the approximated scaled HJ equation}
    \begin{cases}
    u_{t}^{\epsilon,n} + H(Du^{\epsilon,n},\frac{x}{\epsilon},\omega) = 0 & (x,t)\in\RR^{d}\times(0,\infty)\\
    u^{\epsilon,n}(x,0,\omega) = u_{0}^{n}(x) & x\in\RR^{d} 
    \end{cases} \tag{$\text{HJ}^{\epsilon,n}$}
    \end{equation}
    By the comparison principle, we get that 
	\begin{eqnarray*}
		\lim_{n\rightarrow\infty}u^{\epsilon,n}(x,t,\omega) = u^{\epsilon}(x,t,\omega), &\text{uniformly in}& \RR^{d}\times (0,\infty)
	\end{eqnarray*} 
	According to the Lemma \ref{locally uniformly convergence}, for any sequence $\left\lbrace \epsilon_{j} \right\rbrace_{j = 1}^{\infty}$ with $\lim_{j\rightarrow\infty}\epsilon_{j} = 0$, there exists a subsequence $\left\lbrace \epsilon_{j_{k}}\right\rbrace_{k = 1}^{\infty}$, such that
	\begin{eqnarray*}
	\lim_{k\rightarrow\infty}u^{\epsilon_{j_{k}},n}(x,t,\omega) = \overline{u^{n}}(x,t,\omega), &\text{locally uniformly in}& \RR^{d}\times (0,\infty)
	\end{eqnarray*}
	By the Lemma \ref{a variant of the perturbed test function method}, for any $\omega\in\widetilde{\Omega}$, $\overline{u^{n}}(x,t,\omega)$ is the unique viscosity solution of the equation (\ref{the homogenized HJ equation}) with the initial condition replaced by $u_{0}^{n}$. i.e., $\overline{u^{n}}(x,t,\omega)$ is independent of $\omega\in\widetilde{\Omega}$, let us denote it by $\overline{u^{n}}(x,t)$. By the uniqueness of $\overline{u^{n}}(x,t)$, 
	\begin{eqnarray*}
	\lim_{\epsilon\rightarrow 0}u^{\epsilon,n}(x,t,\omega) = \overline{u^{n}}(x,t), &\text{locally uniformly in}& \RR^{d}\times (0,\infty)
	\end{eqnarray*} 
	Next, we apply the comparison principle to $\overline{u^{n}}(x,t)$ and $\overline{u}(x,t)$ and get that
	\begin{eqnarray*}
	\lim_{n\rightarrow\infty}\overline{u^{n}}(x,t) = \overline{u}(x,t), &\text{uniformly in}& \RR^{d}\times (0,\infty)
	\end{eqnarray*}
	Finally, for any $\omega\in\widetilde{\Omega}$, the following locally uniformly convergence holds in $\RR^{d}\times\RR^{+}$.
   \begin{eqnarray*}
   	&&\lim_{\epsilon\rightarrow 0}\left| u^{\epsilon}(x,t,\omega) - \overline{u}(x,t)\right|\\
   	&\leq& \lim_{\substack{n\rightarrow\infty \\ \epsilon\rightarrow 0}} \left( \left| u^{\epsilon}(x,t,\omega) - u^{\epsilon,n}(x,t,\omega)\right| + \left| u^{\epsilon,n}(x,t,\omega) - \overline{u^{n}}(x,t)\right| + \left| \overline{u^{n}}(x,t) - \overline{u}(x,t)\right| \right) \\
   	&\leq& \lim_{\substack{n\rightarrow\infty \\ \epsilon\rightarrow 0}} \left( \left| u_{0}(x) - u_{0}^{n}(x)\right| + \left| u^{\epsilon,n}(x,t,\omega) - \overline{u^{n}}(x,t)\right| + \left| u_{0}(x) - u_{0}^{n}(x)\right| \right)\\
   	&=& 0
   \end{eqnarray*}
\end{proof}

\begin{proof}[Proof of the Theorem \ref{the main theorem}]
	It follows from the Proposition \ref{a summary of the regularly homogenizability} and the Proposition \ref{the homogenization based on regularly homogenizability}.
\end{proof}

\bibliographystyle{amsplain}

\end{document}